\newtheorem{theorem}{Theorem}
\newtheorem{lemma}[theorem]{Lemma}
\newtheorem{proposition}[theorem]{Proposition}
\theoremstyle{definition}
\newtheorem{definition}[theorem]{Definition}
\newtheorem{remark}[theorem]{Remark}
\numberwithin{equation}{section}
\numberwithin{theorem}{section}
\DeclareMathOperator{\Tr}{Tr}
\newcommand{\ds}{\displaystyle}
\let\Im\undefined
\DeclareMathOperator{\Im}{Im}
\title{Wiener--Hopf factorizations and matrix-valued orthogonal polynomials}
\author{Arno B.J. Kuijlaars and Mateusz Piorkowski\footnote{
Department of Mathematics, Celestijnenlaan 200 B, Box 2400, KU Leuven, Belgium. \\
Email addresses: \texttt{arno.kuijlaars@kuleuven.be} and \texttt{mateusz.piorkowski@kuleuven.be}}}
\begin{document}

\maketitle

%\tableofcontents

\begin{abstract}
	We compare two methods for analysing periodic dimer models. These are the matrix-valued orthogonal polynomials approach due to Duits and one of the authors, and the Wiener--Hopf approach due to Berggren and Duits. We establish their equivalence in the special case of the Aztec diamond. Additionally, we provide explicit formulas for the matrix-valued orthogonal polynomials/Wiener--Hopf factors in the case of the $2 \times 2$-periodic Aztec diamond in terms of Jacobi theta functions related to the spectral curve of the model.  
\end{abstract}

\section{Introduction}
\subsection{Motivation}
Weighted models of the Aztec diamond, in particular those with a doubly periodic weight structure, have generated considerable interest in recent years. One of the driving forces behind this development has been the seminal paper of Kenyon, Okounkov and Sheffield \cite{KOS06}. There, the authors provide a description of possible Gibbs measures that can appear in general dimer models with doubly periodic weights. These Gibbs measures can be classified into three types usually referred to as \emph{smooth} (also gaseous), \emph{rough} (also liquid) and \emph{frozen} (also solid). Characteristic for each class is the decay rate of dimer correlations; exponential decay in smooth phases, quadratic decay in rough phases and no decay (i.e., perfect correlation) in frozen phases. It turns out that the space of Gibbs measures is parametrized by the amoeba of a certain spectral curve which has the Harnack property.

It is natural to expect that for large tilings of a finite domain the same type of Gibbs measures would appear locally. Significant progress on this problem for various tiling models has been achieved in the past decade, see e.g.~\cite{B21, BB23+, BD19, CDKL20, CJ16, CY14, DK21, DiFS14}. Such models also give rise to the appearance of arctic curves separating the rough region from the frozen and smooth regions, and the dimer statistics in the vicinity of such curves has generated considerable interest, see e.g.~\cite{BCJ18, BCJ22, CJ16, Jo05, JM23}. 

Due to a result of Kenyon \cite{K97}, it is known that the location of dimers in a dimer configuration gives rise to a determinantal point process. Here, the correlation kernel is essentially given through the inverse of the Kasteleyn matrix. Thus, one way to characterize the three regions mentioned above is via the asymptotic analysis of said correlation kernel. To this end, it is necessary to express it in a form suitable for asymptotic analysis. Here we mention two techniques which were used successfully for this purpose: the inversion of the Kasteleyn matrix via a recursion based on the domino shuffle \cite{CY14}, and the matrix-valued orthogonal polynomials (MVOPs)/Wiener--Hopf method \cite{BD19, DK21}. While the first method uses Kasteleyn theory, the second method uses a bijection between tilings of the Aztec diamond and  non-intersecting paths, together with the Eynard--Mehta Theorem \cite{EM98}.

In the first part of the present paper we take a closer look at the relation between the MVOPs method \cite{DK21} and the Wiener--Hopf method \cite{BD19}. Our results contained in Sections~\ref{MVOPs_WH}--\ref{SectkPer} imply that both approaches are equivalent in the case of the periodic Aztec diamond defined below. We also consider general matrix-valued contour orthogonality with respect to a rational matrix-valued weight, and show under which conditions certain MVOPs can be constructed in terms of a Wiener--Hopf factorization of the weight matrix. That the Wiener--Hopf construction of the MVOPs is indeed possible in the case of the periodic Aztec diamond follows from the results of \cite{BD19}, see also Section~\ref{SectkPer}. This statement can be viewed as a natural continuation of the recent paper \cite{CD23}, where the equivalence between the domino shuffle and Wiener--Hopf factorizations was established. 

While these results are known to experts \cite{BB23+}, they have never been written down in detail. Interestingly, they also imply an extension of a double integral formula due to Berggren and Duits \cite{BD19}, which is stated in Eq.~\eqref{DIntPhi}. The equivalence between MVOPs and Wiener--Hopf factorization does not appear to extend to tilings of the hexagon, though we will not address this topic here, cf.~\cite{CDKL20}. 

The second part of our paper deals with explicit formulas for the MVOPs (or equivalently Wiener--Hopf factors). In has been pointed out in \cite[p.~59]{BB23+}, \cite[p.~39]{BD19} that the iterative computation of the Wiener--Hopf factors according to \cite{BD19} leads to a (seemingly) intractable non-linear recursion, which was later identified to be the domino shuffle in \cite{CD23}. Instead of solving this recursion, we present in Section~\ref{2x2Section} an alternative approach based on a linear flow on the spectral curve. A special case of this construction (though not in terms of Jacobi theta functions) was recently done in \cite{BD23}, where the aformentioned linear flow was introduced, see also \cite{MV91}. Our approach allows us to obtain explicit formulas for the MVOPs (or Wiener--Hopf factors) for \emph{generic} $2\times2$-periodic models of the Aztec diamond. The key step in our proof is the diagonalization of the weight matrix and the factorization of the associated eigenvalue, viewed as a meromorphic function on the spectral curve, into a product of two multivalued functions. The multivaluedness is the main novelty compared to \cite{BD23} and leads naturally to Jacobi theta functions. Note that due to our $2\times2$-periodicity assumption, the genus of the spectral curve is one. 

As the underlying constructions can be performed explicitly in terms of Jacobi theta functions, the formulas for the MVOPs in terms of these special functions follow, see Theorem \ref{theo21}. Showing this is technically the most demanding part of the paper and topic of Section~\ref{Sect2x2Const}. However, we are optimistic that with more effort an analog of Theorem \ref{theo21} in the case of higher periodicity (hence higher genus) can be shown using similar techniques.

\subsection{The weighted Aztec diamond} Let us now set the stage by introducing the weighted Aztec diamond. We will keep our exposition brief, as this model has been described in various works in great detail, see e.g.~\cite{BB23+, DK21}. 

\subsubsection{Dimer models}
The Aztec diamond is a certain diamond shaped region in the plane, consisting of translates of unit squares. The task is to tile this region with horizontal and vertical dominos, see Figure~\ref{AztecDiamond} on the right for an example of a tiling of an Aztec diamond of size $4$. Any such tiling of the Aztec diamond is in bijection to a perfect matching (or dimer configuration) of a certain graph that we will define in a moment, see Figure~\ref{AztecDiamond} on the left. In the following, we will use this equivalence to define weighted models of the Aztec diamond.

%%%%FIGURE: DimerTiling
\begin{figure}[h]
    \centering
    \begin{tikzpicture}[scale=0.6]

    %Black vertices 
\foreach \j in {0,...,3}{
    \foreach \k in {0,...,4}{
        \node
        (\j, \k) at (0.5-4.0+\j+\k,-0.5-\j+\k) {$\bullet$};
    }
}

%White vertices
\foreach \j in {0,...,4}{
    \foreach \k in {0,...,3}{
        \node
        (\j, \k) at (0.5-4.0+\j+\k,0.5-\j+\k) {$\circ$};
    }
}
    %Edges
\foreach \j in {0,...,3}{
    \foreach \k in {0,...,3}{
        \draw (0.5-4.0+\j+\k,-0.5-\j+\k) -- (0.5-4.0+\j+\k+0.93,-0.5-\j+\k);
        \draw (0.5-4.0+\j+\k,-0.5-\j+\k) -- (0.5-4.0+\j+\k,-0.5-\j+\k+0.93);
    }
}

\foreach \j in {0,...,3}{
    \foreach \k in {1,...,4}{
        \draw (0.5-4.0+\j+\k,-0.5-\j+\k) -- (0.5-4.0+\j+\k-0.93,-0.5-\j+\k);
        \draw (0.5-4.0+\j+\k,-0.5-\j+\k) -- (0.5-4.0+\j+\k,-0.5-\j+\k-0.93);
    }
}
	    \draw [line width = 1mm] (-0.4,3.5)--(0.4,3.5);
	    \draw [line width = 1mm] (0.6,2.5)--(1.4,2.5);
	    \draw [line width = 1mm] (-0.5,2.4)--(-0.5,1.6);
	    \draw [line width = 1mm] (-1.5,2.4)--(-1.5,1.6);
	    \draw [line width = 1mm] (-2.5,1.4)--(-2.5,0.6);
	    \draw [line width = 1mm] (0.5,1.4)--(0.5,0.6);
	    \draw [line width = 1mm] (1.5,1.4)--(1.5,0.6);
	    \draw [line width = 1mm] (2.5,1.4)--(2.5,0.6);
	    \draw [line width = 1mm] (-3.5,0.4)--(-3.5,-0.4);
	    \draw [line width = 1mm] (-1.4,0.5)--(-0.6,0.5);
	    \draw [line width = 1mm] (3.5,0.4)--(3.5,-0.4);
	    
	    \draw [line width = 1mm] (-2.5,-0.6)--(-2.5,-1.4);
	    \draw [line width = 1mm] (-1.5,-0.6)--(-1.5,-1.4);
	    \draw [line width = 1mm] (-0.5,-0.6)--(-0.5,-1.4);
	    \draw [line width = 1mm] (0.6,-0.5)--(1.4,-0.5);	    
	    \draw [line width = 1mm] (2.5,-0.6)--(2.5,-1.4);
	    
	    \draw [line width = 1mm] (0.5,-1.6)--(0.5,-2.4);
	    \draw [line width = 1mm] (1.5,-1.6)--(1.5,-2.4);
	    \draw [line width = 1mm] (-1.4,-2.5)--(-0.6,-2.5);
	    \draw [line width = 1mm] (-0.4,-3.5)--(0.4,-3.5);

     %Domino Tiling
     % West dominos
				
				\foreach \x/\y in {7/0,8/1,8/-1,9/2,10/-1,11/-2,12/1}
				{ \draw
					(\x,\y)--(\x,\y-1)--(\x+1,\y-1)--(\x+1,\y); 
					\draw [line width = 1mm] (\x,\y-1) rectangle (\x+1,\y+1);
				}
				
				% East dominos
				\foreach \x/\y in {9/-1,10/2,11/1,12/-2,13/-1,13/1,14/0}
				{  \draw
					(\x,\y)--(\x,\y+1)--(\x+1,\y+1)--(\x+1,\y);
					\draw [line width = 1mm] (\x,\y-1) rectangle (\x+1,\y+1);
				}
				
				% South dominos
				\foreach \x/\y in {11/-3,10/-2,12/0}
				{  \draw
					(\x,\y-1)--(\x-1,\y-1)--(\x-1,\y)--(\x,\y);
					\draw [line width = 1mm] (\x-1,\y-1) rectangle (\x+1,\y);
				}
				
				% North dominos
				\foreach \x/\y in {10/1,11/4,12/3}
				{  \draw
					(\x,\y-1)--(\x+1,\y-1)--(\x+1,\y)--(\x,\y);
					\draw [line width=1mm] (\x-1,\y-1) rectangle (\x+1,\y);
				}
				
    \end{tikzpicture}
    \caption{\label{AztecDiamond} A dimer configuration and the corresponding tiling of the Aztec diamond of size $4$.}
    
\end{figure}
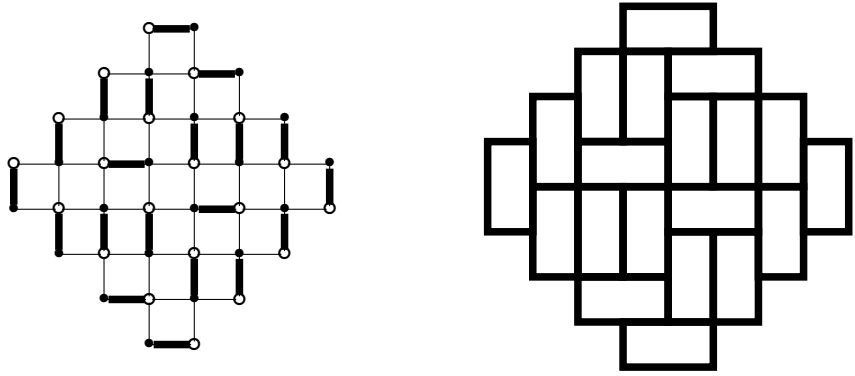

A dimer model is a probability measure on perfect matchings of a graph $\mathcal G$. Consider the graph $\mathcal G = (\mathcal V, \mathcal E)$, where $\mathcal V$ is the set of vertices and $\mathcal E$ is the set of edges. A perfect matching $M \subseteq \mathcal E$ of the graph $\mathcal G$ is a subset of edges such that each vertex of $\mathcal V$ is covered by exactly one edge in $M$. An edge $e$ of a perfect matching $M$ is called a \emph{dimer}, while $M$ is called a \emph{dimer configuration}. We denote by $\mathcal M$ the set of dimer configurations of a graph $\mathcal G$. A dimer model is defined by a probability measure 
\begin{align*}
\text{Prob} \colon \mathcal M  \to \mathbb{R}_+
\end{align*}
on the set of dimer configurations. A common choice for $\text{Prob}$ is
\begin{align}\label{ProbMeas}
    \text{Prob}(M) = \frac{\prod_{e \in M} w(e)}{\sum_{M'\in \mathcal M} \prod_{e' \in M'} w(e')},
\end{align}
where $w \colon \mathcal E \to \mathbb R_+$ is a given weight function on the edges. Here we implicitly assume that at least one dimer configuration of $\mathcal G$ exists.  

For the special case of the Aztec diamond of size $N$, the graph $\mathcal G_N = (\mathcal V_N, \mathcal E_N)$ is bipartite 
\begin{align*}
    \mathcal V_N = \mathcal B_N \cup \mathcal W_N,
\end{align*}
with black and white vertices  contained in
\begin{align*}
	\mathcal B_N &=  \big\lbrace (\tfrac{1}{2}-N+j+k, \, -\tfrac{1}{2}-j+k) \mid j = 0, \dots, N-1, \ k = 0, \dots, N \big\rbrace,
	\\
	\mathcal W_N &=  \big\lbrace (\tfrac{1}{2}-N+j+k, \, \tfrac{1}{2}-j+k) \mid j = 0, \dots, N, \ k = 0, \dots, N-1 \big\rbrace,
\end{align*}
as shown in the left panel of Figure~\ref{AztecDiamond}. 
The edge set $\mathcal E_N$ connects nearest neighbours only. 

\subsubsection{Periodic models of the Aztec diamond} 
To parametrize weighted models of the Aztec diamond, let us fix a positive integer $k \in\mathbb N$ and consider the Aztec diamond of size $kN$. We then rotate the graph $\mathcal G_{kN}$ and associate to it the set $\mathcal E_{kN}$ of edge weights given by $\alpha_{j,i}$, $\beta_{j,i}$, $\gamma_{j,i} >0$ with $i,j \in \lbrace 1, \dots, kN \rbrace$, as depicted in Figure~\ref{DimerGraph}. Then the probability distribution on tilings, or equivalently dimer configurations, of the corresponding weighted Aztec diamond is given through \eqref{ProbMeas}.    Here we have adopted the notation of \cite{BB23+}.
\vspace{-35pt}
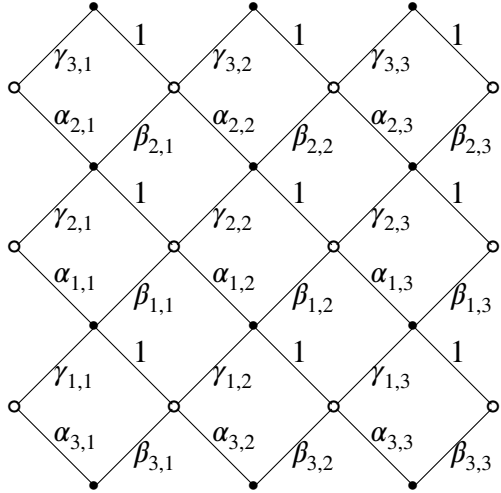
\begin{figure}[h]
\begin{tikzpicture}
    \put(120,-67){$\gamma_{3,1}$}
    \put(120,-127){$\gamma_{2,1}$}
    \put(120,-187){$\gamma_{1,1}$}

    \put(120,-90){$\alpha_{2,1}$}
    \put(120,-150){$\alpha_{1,1}$}
    \put(120,-210){$\alpha_{3,1}$}

    \put(150,-60){$1$}
    \put(150,-120){$1$}
    \put(150,-180){$1$}

    \put(150,-98){$\beta_{2,1}$}
    \put(150,-158){$\beta_{1,1}$}
    \put(150,-218){$\beta_{3,1}$}

    %%%%%%%%%%%%%%%%%%%%%%%%%%%%%%%%%%%%%%%%%%%%%%

    \put(180,-67){$\gamma_{3,2}$}
    \put(180,-127){$\gamma_{2,2}$}
    \put(180,-187){$\gamma_{1,2}$}

    \put(180,-90){$\alpha_{2,2}$}
    \put(180,-150){$\alpha_{1,2}$}
    \put(180,-210){$\alpha_{3,2}$}

    \put(210,-60){$1$}
    \put(210,-120){$1$}
    \put(210,-180){$1$}

    \put(210,-98){$\beta_{2,2}$}
    \put(210,-158){$\beta_{1,2}$}
    \put(210,-218){$\beta_{3,2}$}

    %%%%%%%%%%%%%%%%%%%%%%%%%%%%%%%%%%%%%%%%%%%%%%

    \put(240,-67){$\gamma_{3,3}$}
    \put(240,-127){$\gamma_{2,3}$}
    \put(240,-187){$\gamma_{1,3}$}

    \put(240,-90){$\alpha_{2,3}$}
    \put(240,-150){$\alpha_{1,3}$}
    \put(240,-210){$\alpha_{3,3}$}

    \put(270,-60){$1$}
    \put(270,-120){$1$}
    \put(270,-180){$1$}

    \put(270,-98){$\beta_{2,3}$}
    \put(270,-158){$\beta_{1,3}$}
    \put(270,-218){$\beta_{3,3}$}
\end{tikzpicture}
\vspace{-35pt}
\hspace*{-8pt}
   	\begin{center}
	\rotatebox{45}{
 \begin{tikzpicture}[scale=1.5]
%Black vertices 
\foreach \j in {1,...,3}{
    \foreach \k in {0,...,3}{
        \node
        (\j, \k) at (0.5-4.0+\j+\k,-0.5-\j+\k) {$\bullet$};
    }
}

%White vertices
\foreach \j in {1,...,4}{
    \foreach \k in {0,...,2}{
        \node
        (\j, \k) at (0.5-4.0+\j+\k,0.5-\j+\k) {$\circ$};
    }
}

%Edges
\foreach \j in {1,...,3}{
    \foreach \k in {0,...,2}{
       \draw (1.5-4.0+\j+\k,-0.47-\j+\k)--(1.5-4.0+\j+\k,0.5-\j+\k);
        \draw (1.5-4.03+\j+\k,-0.5-\j+\k)--(1.5-5+\j+\k,-0.5-\j+\k);
    }
}

\foreach \j in {0,...,2}{
    \foreach \k in {0,...,2}{
       \draw (1.5-4.0+\j+\k,-0.53-\j+\k)--(1.5-4.0+\j+\k,-1.5-\j+\k);
        \draw (1.5-3.97+\j+\k,-0.5-\j+\k)--(1.5-3+\j+\k,-0.5-\j+\k);
    }
}

\end{tikzpicture}}
	\end{center}
	\vspace{-70pt}
\caption{Weighted Aztec diamond graph of size $3$ \label{DimerGraph}}
\end{figure}
To define the $k$-periodic Aztec diamond, we have to additionally assume that all weights are periodic in their first subscript with period $k$. That is $\alpha_{j+nk,i} = \alpha_{j,i}$, $n \in \mathbb Z$, and analogously for $\beta_{j,i}$ and $\gamma_{j,i}$. We refer to this model as the \emph{$k$-periodic Aztec diamond} and it is the subject of Section~\ref{SectkPer}.

While at first sight this model appears more general than the $k \times \ell$-periodic model (i.e., $\alpha_{j,i+n\ell} = \alpha_{j,i}$ etc.) introduced in \cite{BB23+}, this is not the case as long as we do not consider the limit $N \to \infty$. The reason is that for any fixed $N$, any $k$-periodic model of the Aztec diamond is a $k \times kN$-periodic model in the sense of \cite{BB23+}. Conversely, any $k \times \ell$-periodic model is trivially a $k$-periodic model. The distinction only becomes apparent if we consider the limit $N \to \infty$, in which case the genus of the spectral curve of a generic $k$-periodic model would increase with $N$. We are however not considering this limit in the present paper. 

In Section~\ref{2x2Section} we consider the generic $2 \times 2$-periodic Aztec diamond, meaning that the weights satisfy $\alpha_{j+2m, i+2n} = \alpha_{j, i}$, for $m,n \in \mathbb Z$, and analogously for $\beta_{j,i}$, $\gamma_{j,i}$. Here \emph{generic} means that we assume that the underlying spectral curve has genus one, which is equivalent to the presence of a smooth phase. Note that this model corresponds exactly to the $k \times \ell$-periodic model defined in \cite{BB23+}, with $k = \ell = 2$.

\subsection{Overview of the rest of the paper}

In the following we will briefly summarize the content of each section and lay out the structure of the paper. 
\begin{itemize}
    \item In Section~\ref{SoR} we include basic definitions and present our main results.   Sections~\ref{MVOPs_WH} and \ref{WH_MVOPs} deal with the relation between MVOPs with respect to a rational weight matrix and certain Wiener--Hopf factorizations of that weight matrix. While these sections are inspired by the appearance of MVOPs and Wiener--Hopf factorizations in tiling models, see \cite{BD19, DK21}, we do not assume any particular form of weight matrix other than it being rational.

    Section~\ref{SectkPer} discusses the $k$-periodic Aztec diamond and the associated matrix-orthogonality. Building on previous results from Proposition~\ref{prop3}, an alternative derivation of the double contour integral formula of Berggen and Duits \cite[Theorem 3.1]{BD19} in the case of the $k$-periodic Aztec diamond is given under milder assumptions on the weight structure. 

    In Section~\ref{2x2Section} we narrow down our choice of weight matrix further by considering the generic $2\times 2$-periodic model of the Aztec diamond. Here our main contribution is Theorem \ref{theo21}, which contains explicit formulas for the monic matrix-valued orthogonal polynomials (alternatively, the Wiener--Hopf factors) in terms of Jacobi theta functions. The underlying genus one Riemann surface is the spectral curve of the weight matrix.   
    \item Section~\ref{ProofsProp2} contains the proofs of the results stated in the Section~\ref{MVOPs_WH}--\ref{SectkPer}.
    \item Section~\ref{Sect2x2Const} contains the proof of Theorem \ref{theo21} on the explicit formulas for the matrix-valued orthogonal polynomials related to the $2\times2$-periodic model.  
    The necessary definitions are contained in Section~\ref{Sect_Prelimianries}, which also includes a few technical lemmas. In Section~\ref{Sect_Aux_Lem} more technical lemmas are proven.   Building on these preliminary results, the complete proof of Theorem \ref{theo21} is given in Section~\ref{SectProofThm}.
\end{itemize}
\section{Statement of results}\label{SoR}

In Sections~\ref{MVOPs_WH} and \ref{WH_MVOPs} we consider MVOPs with respect to a rational weight matrix $W$, and describe conditions under which these polynomials can be written in terms of certain Wiener--Hopf factorizations of $W$ (and vice versa). While the main motivation for this analysis stems from the papers \cite{BD19, DK21} on Wiener--Hopf factorizations resp.~MVOPs related to doubly periodic models of the Aztec diamond, we can and will assume a more general setting for now. 
%We refer to Sect.~\ref{ProofsProp2} for proofs of the propositions  Prop.~\ref{prop1}--\ref{prop3}. 

In Sections~\ref{SectkPer} and \ref{2x2Section} we specialize to the Aztec diamond.
Our main result is Theorem \ref{theo21} which gives an explicit formula for the MVOP associated
with the $2 \times 2$ periodic Aztec diamond.

\subsection{MVOPs from Wiener--Hopf}\label{MVOPs_WH}

Matrix-valued orthogonal polynomials (MVOPs) arise in random tiling models
with periodic weights, as shown in \cite{DK21}. The underlying matrix-valued inner product of two $k\times k$ matrix-valued polynomials $P$ and $Q$ is given by
\begin{align}\label{innerProd}
    \langle \, P \, , \, Q \, \rangle := \frac{1}{2\pi i} \oint_\Gamma P(z) W(z) Q(z) dz \in \mathbb C^{k\times k},
\end{align}
where $W(z)$ is a rational matrix-valued function of size $k \times k$ with  $k \geq 1$
the vertical periodicity of the model. The integral \eqref{MVOP1} is taken entrywise along a simple closed contour $\Gamma$
in the complex plane that encloses the poles of $W$. By a pole of $W$ we mean a pole of at least one of the entries of $W$. In applications to random tilings, the weight matrix $W$ will depend explicitly on the parameters of the model, i.e., the periodic weights as in \eqref{defWforAD}. 

In the setting of doubly periodic models of the Aztec diamond, the $N$-th monic matrix-valued orthogonal polynomial
$P_N(z) = z^N I_k + O(z^{N-1})$, satisfying
\begin{equation} \label{MVOP1} 
	\frac{1}{2\pi i} \oint_{\Gamma} P_N(z) W(z) z^j dz = 0_k, \qquad j =0,1, \ldots, N-1 
	\end{equation}
 is of special interest. Here, $N$ is related to the size $kN$ of the $k$-periodic Aztec diamond. 
 
 Motivated by their appearance in tilings models, we will study in the present and following subsections the orthogonality condition \eqref{MVOP1} under the  minimal assumptions on $W$, $\Gamma$, $N$ stated above, i.e., no relation to tiling models is assumed for the moment. In other words, $W$ is \emph{any} rational $k \times k$ matrix-valued function, $\Gamma$ is only required to enclose all poles of $W$ and $N \in \mathbb N$ is arbitrary. Furthermore, we will be mostly concerned with the case $k \geq 2$ which deals with genuinely matrix-valued, rather than scalar-valued, orthogonality. Just as their scalar-valued counterpart, MVOPs are characterized by a Riemann-Hilbert problem, see \cite{DK21}.

Berggren and Duits obtained in \cite{BD19} a Wiener--Hopf factorization of certain weight matrices $W$, in particular those related to the periodic Aztec diamond.
Here a Wiener--Hopf factorization on a simple closed contour $\Gamma$ (not necessarily
the same one as in \eqref{MVOP1})  is a factorization 
\begin{equation} \label{Wfactors} 
	W  = \varphi_- \varphi_+ = \widehat{\varphi}_+ \widehat{\varphi}_-  \qquad \text{ on } \Gamma, \end{equation}
where $\varphi$ and $\widehat{\varphi}$  are invertible matrix-valued functions that are analytic
on $\mathbb C \setminus \Gamma$ with boundary values $\varphi_{\pm}$ and $\widehat{\varphi}_{\pm}$
on $\Gamma$ that are also invertible, and that are normalized such that
\begin{equation} \label{phiasymp} 
	\lim_{z \to \infty} z^N \varphi(z) = I_k, \quad
	\lim_{z \to \infty} z^N \widehat{\varphi}(z) = I_k.
\end{equation}
Throughout this paper $\Gamma$ is a simple closed contour with
 positive orientation that encloses a bounded domain $\Omega_{int}$.
 We use $\Omega_{ext}$ to denote the unbounded component of $\mathbb C \setminus \Gamma$, cf.~Figure~\ref{Fig:Gamma}.
 
Here we slightly deviate from \cite{BD19} in two ways.
\begin{itemize}
	\item The factorization \eqref{Wfactors} is stated in \cite{BD19} for the unit circle
	$\Gamma_0 = \{ z \in \mathbb C \mid |z| = 1 \}$ only, and
	\item  $\varphi$ is called $\widetilde{\phi}_{\pm}$ and 
	$\widehat{\varphi}$  is called $\phi_{\pm}$ in \cite{BD19},
	where the functions $\phi_{\pm}$ and $\widetilde{\phi}_{\pm}$ are used
	to denote analytic matrix-valued functions, and not just their  boundary
	values on $\Gamma$. 
	 We reserve  $\varphi_{\pm}$ and $\widehat{\varphi}_{\pm}$ 
	for the boundary values, since we will be dealing with Riemann-Hilbert problems where it is customary to do so.
\end{itemize}

If a Wiener--Hopf factorization on $\Gamma$ exists, then $W$ is invertible on $\Gamma$ since
we are assuming that the boundary values  $\varphi_{\pm}$ and $\widehat{\varphi}_{\pm}$ 
are invertible matrices. It follows that $\det W$ is a rational function that is not
identically zero, hence $W^{-1}$ is a rational function as well. 

In a typical situation
we will assume that $W$ and $W^{-1}$ have no poles in common, and $\Gamma$ separates
the poles of $W$ from the poles of $W^{-1}$ in the sense that the poles of $W$ are in the interior
region $\Omega_{int}$ and the poles of $W^{-1}$ are in the exterior region $\Omega_{ext}$. As not all results of the present paper require this assumption, we will only state it when needed. In the case of the doubly periodic  Aztec diamond, the poles of $W$ and $W^{-1}$ will be restricted to the real line, see Figure~\ref{Fig:Gamma}. 
%\fbox{\textbf{Figure included here, alternatively could be included later}}
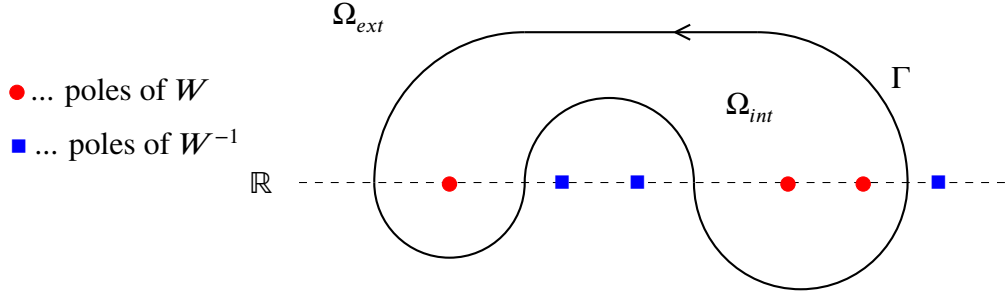
\begin{figure}
            \centering
            \begin{tikzpicture}
                \draw[dashed] (2,0)--(11.4,0);

             %   \draw (4,-0.02) node {\textcolor{red}{$\bullet$}};
                \draw[red,fill=red] (4,-0.02) circle (.5ex);
                \draw (5.5,0) node {\textcolor{blue}{$\smblksquare$}};
                \draw (6.5,0) node {\textcolor{blue}{$\smblksquare$}};
                 \draw[red,fill=red] (8.5,-0.02) circle (.5ex); 
                 \draw[red,fill=red] (9.5,-0.02) circle (.5ex);
                %\draw (8.5,-0.02) node {\textcolor{red}{$\bullet$}};
                %\draw (9.5,-0.02) node {\textcolor{red}{$\bullet$}};
                \draw (10.5,0) node {\textcolor{blue}{$\smblksquare$}};

                \draw (1.5,0) node {$\mathbb R$};

                \draw[thick] (3,0) arc (-180:0:1);
                \draw[thick] (5,0) arc (180:0:1.125);
                \draw[thick] (7.25,0) arc (-180:0:1.42);
                \draw[thick] (3,0) arc (180:90:2);
                \draw[thick] (10.09,0) arc (0:90:2);
                \draw[thick] (5,2)--(8.09,2);
                
                \draw[thick] (7,2)--(7.2,2.1);
                \draw[thick] (7,2)--(7.2,1.9);

                \draw (8,1) node {$\Omega_{int}$};
                \draw (2.8,2.2) node {$\Omega_{ext}$};

   		\draw[red,fill=red] (-1.75,1.2) circle (.5ex);  
   		\draw (-0.5,1.2) node 	{\textcolor{red}{$\bullet$} ... poles of $W$};
                \draw (-0.3,0.5) node {\textcolor{blue}{$\smblksquare$} ... poles of $W^{-1}$};

                 \draw (10,1.4) node {$\Gamma$};
            \end{tikzpicture}
            \caption{The closed contour $\Gamma$ and the domains $\Omega_{int}$ and $\Omega_{ext}$ for the $k$-periodic Aztec diamond considered in Section~\ref{SectkPer}.}
            \label{Fig:Gamma}
        \end{figure}

Matrix-valued orthogonality and Wiener--Hopf factorizations turn out to be intimately related in case that the MVOP of degree $N$ has the peculiar property that $P_N W$ is a matrix-valued polynomial as well,
which means that the multiplication by $P_N$ from the left cancels the poles of $W$. 
In that case the matrix-valued orthogonality \eqref{MVOP1} takes the stronger form
 \begin{equation} \label{MVOP2} 
 	\frac{1}{2\pi i} \oint_{\Gamma} P_N(z) W(z) z^j dz = 0_k, \qquad j =0,1,2, \ldots,
 \end{equation}
 simply by Cauchy's theorem, as all entries of $P_NW$ are polynomials. Thus
 the integral \eqref{MVOP1} vanishes not only for $j$ up to $N-1$,
 but for all non-negative integers $j$. This property turns out to hold for the
 weight matrices \eqref{defWforAD} coming from the $k$-periodic Aztec diamond. 
 However, it does not hold
 for the weight matrices in periodic hexagon tiling models, see e.g.~\cite{GK21}. 
 
 Together with the matrix orthogonality \eqref{MVOP1} we also consider the 
 matrix-valued polynomials with orthogonality on the right. That is, $\widehat{P}_N$ 
 is monic of degree $N$ with
 \begin{equation} \label{MVOP3} 
 	\frac{1}{2 \pi i} \oint_{\Gamma} W(z) \widehat{P}_N(z) z^j dz = 0, \qquad j=0,1, \ldots, N-1. 
 	\end{equation}
Note that unlike the convention of \cite{DK21}, we have not included a transpose for polynomials on the right of $W$, as this simplifies formulas later on. If $W \widehat{P}_N$ is a matrix polynomial, then \eqref{MVOP3} takes the stronger form
\begin{equation} \label{MVOP4} 
	\frac{1}{2 \pi i} \oint_{\Gamma} W(z) \widehat{P}_N(z) z^j dz = 0, \qquad j=0,1,2, \ldots. 
\end{equation}
 
Our first result states that if the Wiener--Hopf factorization on $\Gamma$ exists, then the MVOPs $P_N$ and $\widehat{P}_N$ exist and can be expressed in terms of the Wiener--Hopf factors. In addition $P_N W$ and $W \widehat{P}_N$ are matrix polynomials.

\begin{proposition} \label{prop1}
	Suppose the rational function $W$ has a Wiener--Hopf factorization \eqref{Wfactors}
	on a contour $\Gamma$ that encloses all the poles of $W$, but not any poles of $W^{-1}$.
	Then the following hold.
	\begin{enumerate}
		\item[\rm (a)] 
	$P_N$ defined by 
	\begin{equation} \label{defP}  
		P_N(z) = \begin{cases} \varphi(z)^{-1}, & \quad z \in \Omega_{ext}, \\
			\varphi(z) W^{-1}(z), & \quad z \in \Omega_{int},
	\end{cases} \end{equation}
	is a monic matrix-valued polynomial of degree $N$ with the property
	that $P_N W$ is a matrix-valued polynomial, and the matrix-valued orthogonality
	in the stronger form \eqref{MVOP2} holds,
	\item[\rm (b)]  $\widehat{P}_N$ defined by
	\begin{equation} \label{defhatP}  
		\widehat{P}_N(z) = \begin{cases} \widehat{\varphi}(z)^{-1}, & \quad z \in \Omega_{ext}, \\
			W^{-1}(z) \widehat{\varphi}(z), & \quad z \in \Omega_{int}, 
	\end{cases} \end{equation}
	is a monic matrix-valued polynomial of degree $N$ with the property that 
	$W \widehat{P}_N$ is a matrix-valued polynomial as well, and
	the matrix-valued orthogonality from the right in the stronger form \eqref{MVOP4} holds,
	\item[\rm (c)] $\det P_N = \det \widehat{P}_{N}$ and this is a polynomial
	of degree $kN$, whose zeros coincide with the $kN$ poles of $\det W$, and
	\item[\rm (d)] the above defined  $P_N$ and $\widehat{P}_N$ are the unique monic 
	matrix-valued polynomials of degree $N$
	satisfying the matrix-valued orthogonality \eqref{MVOP1} and \eqref{MVOP3}.
	\end{enumerate}
\end{proposition}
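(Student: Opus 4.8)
The plan is to prove (a)--(c) by working directly with the factorization, and then to derive the uniqueness in (d) from (a)--(c) by an orthogonality--bootstrap argument; the uniqueness is where the one real idea is needed. For \emph{(a)} (and \emph{(b)}, identically): define $P_N$ by \eqref{defP}. On $\Gamma$ both $W$ and $W^{-1}$ are analytic (their poles lie off $\Gamma$), so the boundary values of \eqref{defP} are $\varphi_-^{-1}$ from $\Omega_{ext}$ and $\varphi_+W^{-1}$ from $\Omega_{int}$, and these agree precisely because $W=\varphi_-\varphi_+$; hence $P_N$ continues analytically across $\Gamma$ and is entire. Since $P_N=\varphi^{-1}$ on $\Omega_{ext}$, the normalization $z^N\varphi(z)\to I_k$ gives $z^{-N}P_N(z)=(z^N\varphi(z))^{-1}\to I_k$, so $P_N$ has growth $O(|z|^N)$ and, being entire, is a monic matrix polynomial of degree $N$. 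Next, $P_NW$ equals $\varphi$ on $\Omega_{int}$ (directly from \eqref{defP}), hence is analytic there, and it is analytic on a neighbourhood of $\Gamma\cup\Omega_{ext}$ because $W$ has no poles outside $\Omega_{int}$; being entire of polynomial growth, $\pi:=P_NW$ is a matrix polynomial, and \eqref{MVOP2} then follows from Cauchy's theorem since every entry of $P_N(z)W(z)z^j$ is a polynomial. Part (b) is the same with $W=\widehat\varphi_+\widehat\varphi_-$; write $\widehat\pi:=W\widehat P_N$ for the corresponding polynomial.

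For \emph{(c)}: as $P_N$ is monic of degree $N$, $\det P_N$ is a monic scalar polynomial of degree $kN$. On $\Omega_{ext}\cup\Gamma$ it equals $(\det\varphi)^{-1}$, which is analytic and nowhere zero, so all $kN$ of its zeros lie in $\Omega_{int}$; on $\Omega_{int}$ it equals $\det\varphi/\det W$ with $\det\varphi$ analytic and nonvanishing, so those zeros, with multiplicity, are exactly the poles of $\det W$. In particular $\det W$ has precisely $kN$ poles, all in $\Omega_{int}$; and the identical computation shows $\det\widehat P_N$ is monic of degree $kN$ with the same zeros and multiplicities, so $\det\widehat P_N=\det P_N$.

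For \emph{(d)}, existence is contained in (a), (b), so only uniqueness remains. Suppose $Q_N$ is monic of degree $N$ and satisfies \eqref{MVOP1}, and set $R:=Q_N-P_N$, a matrix polynomial of degree $\le N-1$ with $\frac{1}{2\pi i}\oint_\Gamma R(z)W(z)z^j\,dz=0$ for $0\le j\le N-1$. The crucial step is to boost this to all $j\ge 0$: for $j\ge N$ put $m:=j-N\ge 0$; since $W\widehat P_N=\widehat\pi$ is a matrix polynomial by (b), the integrand $R(z)z^m\widehat\pi(z)$ is a matrix polynomial, hence $\frac{1}{2\pi i}\oint_\Gamma R(z)W(z)z^m\widehat P_N(z)\,dz=0$; expanding $z^m\widehat P_N(z)=z^jI_k+\sum_{i=0}^{N-1}\widehat P_i z^{m+i}$ exhibits $\frac{1}{2\pi i}\oint_\Gamma RWz^j\,dz$ as a combination of the integrals $\frac{1}{2\pi i}\oint_\Gamma RWz^{m+i}\,dz$ with all exponents $m+i<j$, so induction on $j$ gives $\frac{1}{2\pi i}\oint_\Gamma R(z)W(z)z^j\,dz=0$ for every $j\ge 0$. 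Deforming $\Gamma$ outward to a large circle (allowed, as $RW$ is analytic in $\Omega_{ext}$) and reading off Laurent coefficients at infinity, the vanishing of all these integrals forces the Laurent expansion of $RW$ at infinity to have no negative powers; since $RW$ is rational, it is therefore a matrix polynomial. Finally, $W=P_N^{-1}\pi$ gives $R\,P_N^{-1}=(RW)\,\pi^{-1}$: the left-hand side has poles only at the zeros of $\det P_N$, hence in $\Omega_{int}$ by (c), while the right-hand side has poles only at the zeros of $\det\pi=\det P_N\,\det W$, which all lie in $\Omega_{ext}$ (by (c) the zeros of $\det P_N$ cancel the poles of $\det W$, and the remaining zeros of $\det\pi$ are the zeros of $\det W$, i.e.\ the poles of $W^{-1}$). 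Hence $R\,P_N^{-1}$ is entire and, as $\deg R<\deg P_N$, vanishes at infinity, so $R\,P_N^{-1}\equiv 0$ and $R=0$, i.e.\ $Q_N=P_N$. Uniqueness of $\widehat P_N$ follows symmetrically, bootstrapping with $P_N$ and $\pi=P_NW$.

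I expect (d) to be the main obstacle. The bilinear form \eqref{innerProd} is badly degenerate in this setting --- the polynomial property from (a), (b) makes all the moments $\frac{1}{2\pi i}\oint_\Gamma P_NWz^j\,dz$ and $\frac{1}{2\pi i}\oint_\Gamma z^jW\widehat P_N\,dz$ vanish --- so there is no nondegenerate Gram matrix to invert, and one must instead exploit precisely that degeneracy through the bootstrap above, together with the pole--zero count of (c). The other points (the analytic continuation across $\Gamma$ in (a), (b), and the growth estimates) are routine, all the more so since $\varphi$ turns out to be rational: it equals $\pi$ on $\Omega_{int}$ and $P_N^{-1}$ on $\Omega_{ext}$.
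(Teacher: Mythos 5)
Your proof is correct. Parts (a) and (b) follow the paper's argument essentially verbatim: continuation across $\Gamma$ from $W=\varphi_-\varphi_+$, a Liouville-type growth argument for monicity, and Cauchy's theorem for the strengthened orthogonality. For (c) you replace the paper's step of proving $\det\varphi=\det\widehat{\varphi}$ (via an auxiliary function continuous across $\Gamma$ and Liouville) by a direct comparison: both $\det P_N$ and $\det\widehat{P}_N$ are monic of degree $kN$ with zeros exactly at the poles of $\det W$, with multiplicities equal to the pole orders, hence they coincide; this is a harmless and slightly cleaner variant. The genuine divergence is in (d). The paper proves uniqueness through the Riemann--Hilbert problem: Lemma \ref{uniqueMVOP} reduces unique existence of $P_N$ to solvability of the RH problem \eqref{RHYjump}--\eqref{RHYasymp}, and solvability is then shown by constructing $Q_{N-1}$ explicitly from the second factorization, taking $H=\widehat{\varphi}^{-1}$ and verifying \eqref{QNortho} by a residue computation at infinity. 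You argue directly instead: for the difference $R=Q_N-P_N$ you bootstrap the $N$ orthogonality conditions to all moments $j\ge 0$ using the polynomiality of $W\widehat{P}_N$ from (b), deduce from the vanishing Laurent coefficients at infinity that $RW$ is a matrix polynomial, and then use the pole/zero bookkeeping of (c) to see that $RP_N^{-1}=(RW)(P_NW)^{-1}$ has no poles at all (the two candidate pole sets, the poles of $\det W$ and the zeros of $\det W$, are disjoint), is $O(z^{-1})$ at infinity, and hence vanishes. Your route is more elementary and self-contained; what the paper's route buys beyond uniqueness is solvability of the RH problem and the explicit formula \eqref{QNformula2} for $Q_{N-1}$, both of which are reused later (Propositions \ref{prop2} and \ref{prop3}, the reproducing kernel, and the double integral formula). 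Two cosmetic remarks: the zeros of $\det W$ are contained in, not necessarily equal to, the poles of $W^{-1}$ (the containment, which follows since a zero of $\det W$ is a pole of $\det(W^{-1})$ and hence of some entry of $W^{-1}$, is all your argument needs, and in fact the disjointness of zeros and poles of the scalar function $\det W$ already suffices); and your closing comment that $\varphi$ ``is rational'' should be read as piecewise rational, since $\varphi$ equals $P_N^{-1}$ in $\Omega_{ext}$ and $P_NW$ in $\Omega_{int}$ with a genuine jump across $\Gamma$.
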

The proof of Proposition~\ref{prop1} is in Sections~\ref{proofprop1abc} and \ref{proofprop1d} below.
The proofs of parts (a) and (b) are an almost immediate  verification.
In particular, the matrix-valued orthogonality in the stronger forms \eqref{MVOP2} and \eqref{MVOP4} follows
as we already discussed. The proof of part (c) requires a bit work. 
The uniqueness statement in part (d) is less obvious, and we prove
it by means of the Riemann-Hilbert (RH) problem for MVOPs, \cite{CM12,GIM11}. It is an immediate 
extension of the RH problem for orthogonal polynomials due to Fokas, Its, Kitaev \cite{FIK92}. 

The aforementioned RH problem asks for an analytic matrix-valued function $Y : \mathbb C \setminus \Gamma \to \mathbb C^{2k \times 2k}$
satisfying 
\begin{equation} \label{RHYjump} 
	Y_+ = Y_- \begin{pmatrix} I_k & W  \\ 0_k & I_k \end{pmatrix} \quad \text{ on } \Gamma, 
\end{equation}
and the asymptotic condition
\begin{equation} \label{RHYasymp} 
	Y(z) = \left( I_{2k} + O(z^{-1}) \right) \begin{pmatrix} z^{N} I_k & 0_k \\ 0_k & z^{-N} I_k \end{pmatrix} 
	\quad \text{ as } z \to \infty. \end{equation}
As usual for RH problems the notation $Y_{+}(z)$ for $z$ on an oriented contour
denotes the limit of $Y(w)$ as $w \to z$ from the left, and $Y_-(z)$ denotes the limit
from the right. We assume $\Gamma$ is oriented counterclockwise.
It is part of the assumptions in the RH problem that these limits exist. 
The matrices in \eqref{RHYjump} and \eqref{RHYasymp} are block matrices with blocks of size
$k \times k$. We use $I_k$ to denote the identity matrix of size $k \times k$, and $0_k$
to denote the zero matrix of size $k \times k$.

Provided it exists, the unique solution of the RH problem is
\begin{equation} \label{RHYsol}
	Y(z) = \begin{pmatrix} P_N(z) & \ds \frac{1}{2\pi i} \oint_{\Gamma} \frac{P_N(s) W(s)}{s-z} ds \\[10pt]
		Q_{N-1}(z) & \ds \frac{1}{2\pi i} \oint_{\Gamma} \frac{Q_{N-1}(s) W(s)}{s-z} ds \end{pmatrix}, \quad
	z \in \mathbb C \setminus \Gamma,
\end{equation} 
where $Q_{N-1}$ is a matrix-valued polynomial of degree $\leq N-1$ satisfying
\begin{equation} \label{QNortho} 
	\frac{1}{2\pi i} \oint_{\Gamma} Q_{N-1}(z) W(z) z^j dz =
	\begin{cases} 0_k & \text{ for } j =0,1, \ldots, N-2, \\
		-I_k & \text{ for } j=N-1. \end{cases} \end{equation}
This is an immediate consequence of the Sokhotski--Plemelj formula. We will show  that in the situation
of Proposition \ref{prop1}, the RH problem has indeed a solution and the
 polynomial $Q_{N-1}$ is given by
\begin{equation} \label{QNformula2} 
	Q_{N-1}(z) = \begin{cases}
		\left( \ds \frac{1}{2\pi i} \oint_{\Gamma}
		\widehat{\varphi}_+^{-1}(s) \varphi_-(s)  \frac{ds}{s-z} \right)  \varphi^{-1}(z),
		& z \in \Omega_{ext}, \\
		\left( \ds \frac{1}{2\pi i} \oint_{\Gamma}
		\widehat{\varphi}_+^{-1}(s) \varphi_-(s)  \frac{ds}{s-z} \right)  \varphi(z) W^{-1}(z)
		- \widehat{\varphi}^{-1}(z),
		& z \in \Omega_{int},
	\end{cases}
\end{equation} 
see the proof of Proposition \ref{prop1}(d). In particular, for the choice of weight matrix \eqref{defWforAD} related to the periodic Aztec diamond, the RH problem has a unique solution which is necessarily of the form \eqref{RHYsol}, see \cite[Lemma 4.8]{DK21}.

\subsection{Wiener--Hopf from MVOPs}
\label{WH_MVOPs}

Now we want to start from the MVOP $P_N$ in \eqref{MVOP1} and we ask under which conditions the  Wiener--Hopf factorizations \eqref{Wfactors} can be obtained from it. From Proposition \ref{prop1} it is clear that a necessary condition is
that $\widehat{P}_N$ exists as well, and that both $P_NW$ and $W \widehat{P}_N$
are matrix-valued polynomials.
	
The existence of $\widehat{P}_N$ follows from the unique existence of $P_N$
and this is a general fact, since the conditions \eqref{MVOP1} and \eqref{MVOP3}
can be phrased as linear systems of equations with  the same block matrix. 
We will, however, prove this fact in Proposition \ref{prop2}(a) from the RH problem for
matrix-valued orthogonal polynomials.

A further object of interest is the reproducing kernel $R_N$ with respect to the inner product  \eqref{innerProd} for matrix-valued polynomials of degree $\leq N-1$, which satisfies 
\begin{itemize}
        \item $\langle \, F, \, R_{N}( \, \cdot \, , z) \, \rangle = F(z)$, for all matrix-valued polynomials $F$ of 
        
        degree $\leq N-1$, 
        \item $\langle \,  R_{N}(w,  \, \cdot \, ) , \, F \, \rangle = F(w)$, for all matrix-valued polynomials $F$ of 
        
        degree $\leq N-1$,
        \item $R_N(w,z)$ is a polynomial of degree $\leq N-1$ in both variables $w$, $z$.
    \end{itemize}
As shown by Delvaux in \cite{D10}, provided the RH problem has a solution, the reproducing kernel can be expressed via a Christoffel--Darboux type formula  
\begin{equation} \label{RHYRN}
	R_N(w,z) = \frac{1}{z-w} \begin{pmatrix} 0_k & I_k \end{pmatrix}
		Y^{-1}(w) Y(z) \begin{pmatrix} I_k \\ 0_k \end{pmatrix}. 	
	\end{equation}
The significance of the reproducing kernel comes from its appearance in the contour integral formula \eqref{phiRphi}. We provide an integral representation of $R_N$ in Proposition~\ref{prop3}(d) below.

We first state  a partial converse to Proposition \ref{prop1}. 
\begin{proposition} \label{prop2}
	Let $W$ be a rational matrix-valued function and let $\Gamma$
	be a closed contour that encloses the poles of $W$. 
	Suppose the monic matrix-valued polynomial $P_N$ of degree $N$ with the 
	orthogonality \eqref{MVOP1} uniquely exists. \begin{enumerate}
		\item[\rm (a)] 
	Then also  $\widehat{P}_N$ uniquely exists and it is given by
	\begin{equation} \label{RHYhatP} 
		\widehat{P}_N(z) = \begin{pmatrix} 0_k & I_k \end{pmatrix}
		Y^{-1}(z) \begin{pmatrix} 0_k \\ I_k \end{pmatrix},
		\quad z \in \mathbb C \setminus \Gamma, \end{equation}
	where $Y$ is the solution of the RH problem \eqref{RHYjump}-\eqref{RHYasymp}.
	\end{enumerate}
	Assume, in addition, that both $P_NW$ and $W \widehat{P}_N$ are matrix-valued polynomials as well.
	\begin{enumerate}
		\item[\rm (b)] Then  $P_N(z)$ and  $\widehat{P}_N(z)$ are invertible for $z \in \Omega_{ext}$, and
		\item[\rm (c)]  
		$\widehat{\varphi}$ and  $\varphi$  defined by 
		\begin{equation} \label{defphi}
		\widehat{\varphi}(z) = \begin{cases} \widehat{P}_N(z)^{-1}, & \quad z \in \Omega_{ext}, \\
			W(z) \widehat{P}_N(z), & \quad z \in \Omega_{int},
			\end{cases} \end{equation}
		and
		\begin{equation} \label{deftilphi}
			\varphi(z) = \begin{cases} P_N(z)^{-1}, & \quad z \in \Omega_{ext}, \\
				P_N(z) W(z), & \quad z \in \Omega_{int}, 
		\end{cases} \end{equation}
		are analytic  in $\mathbb C \setminus \Gamma = \Omega_{ext} \cup \Omega_{int}$
		and satisfy \eqref{Wfactors} and \eqref{phiasymp}.  
	\end{enumerate}
\end{proposition}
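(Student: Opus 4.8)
The plan is to route everything through the Riemann--Hilbert (RH) problem \eqref{RHYjump}--\eqref{RHYasymp} and its solution formula \eqref{RHYsol}, so the first task is to show this RH problem is uniquely solvable under the hypothesis of part (a). Writing $M_\ell := \frac{1}{2\pi i}\oint_\Gamma W(z)z^\ell\,dz \in \mathbb C^{k\times k}$, the three orthogonality requirements \eqref{MVOP1}, \eqref{MVOP3} and \eqref{QNortho} are linear systems in the matrix coefficients of $P_N$, $\widehat P_N$ and $Q_{N-1}$, all governed by the block Hankel matrix $\mathcal M = (M_{i+j})_{i,j=0}^{N-1}$: the $P_N$- and $Q_{N-1}$-systems amount to multiplying the block row of coefficients by $\mathcal M$, while the $\widehat P_N$-system multiplies the block column of coefficients by $\mathcal M$. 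Hence unique solvability of \eqref{MVOP1} is equivalent to invertibility of $\mathcal M$ as a $kN\times kN$ matrix, which simultaneously yields the unique existence of $\widehat P_N$ and of $Q_{N-1}$. Feeding $P_N$ and $Q_{N-1}$ into \eqref{RHYsol} produces a function $Y$ whose jump is \eqref{RHYjump} (Sokhotski--Plemelj, using that $P_N$, $Q_{N-1}$ have no jump) and whose asymptotics are \eqref{RHYasymp} (the vanishing moments in \eqref{MVOP1} push the $(1,1)$ and $(1,2)$ blocks into the right orders, and the normalizing $-I_k$ in \eqref{QNortho} makes $\tfrac{1}{2\pi i}\oint_\Gamma \tfrac{Q_{N-1}(s)W(s)}{s-z}\,ds = z^{-N}I_k + O(z^{-N-1})$). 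Uniqueness of $Y$ is the standard argument: $\det Y$ has no jump and tends to $1$, hence $\det Y\equiv 1$, so $Y^{-1}$ is analytic, and two solutions differ by an entire function tending to $I_{2k}$.

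To get the formula \eqref{RHYhatP}, put $Z := Y^{-1}$. From $Y_+ = Y_- \left(\begin{smallmatrix} I_k & W \\ 0_k & I_k\end{smallmatrix}\right)$ one reads off $Z_- = \left(\begin{smallmatrix} I_k & W \\ 0_k & I_k\end{smallmatrix}\right) Z_+$, so the bottom block row of $Z$ is jump-free; thus $Z_{21}$ and $Z_{22}$ are entire, and the asymptotics of $Y^{-1}$ force $Z_{22}(z) = z^N I_k + O(z^{N-1})$ and $\deg Z_{21}\le N-1$. The $(1,2)$ block of the same relation gives $W Z_{22} = Z_{12,-} - Z_{12,+}$ on $\Gamma$; integrating against $z^j$ with $0\le j\le N-1$, the $Z_{12,+}$-term drops by Cauchy's theorem ($Z_{12}$ is analytic inside $\Gamma$) and the $Z_{12,-}$-term drops because $Z_{12} = O(z^{-N-1})$ at infinity. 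Hence $Z_{22}$ satisfies \eqref{MVOP3}, and being monic of degree $N$ it equals $\widehat P_N$ by the uniqueness just established; this is part (a).

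For (b) and (c), assume in addition that $P_N W$ and $W\widehat P_N$ are matrix polynomials. Then the Cauchy transform $\tfrac{1}{2\pi i}\oint_\Gamma \tfrac{P_N(s)W(s)}{s-z}\,ds$ in \eqref{RHYsol} vanishes identically on $\Omega_{ext}$ (the integrand is analytic on $\overline{\Omega_{int}}$) and equals $P_N(z)W(z)$ on $\Omega_{int}$. Similarly, since $W\widehat P_N$ is a polynomial, the jump $Z_{12,-}-Z_{12,+} = W\widehat P_N$ together with $Z_{12} = O(z^{-N-1})$ at infinity gives $Z_{12}(z) = -\tfrac{1}{2\pi i}\oint_\Gamma \tfrac{(W\widehat P_N)(s)}{s-z}\,ds$, which again vanishes on $\Omega_{ext}$. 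Hence on $\Omega_{ext}$ both $Y$ and $Y^{-1}$ are block lower-triangular, $Y = \left(\begin{smallmatrix} P_N & 0_k \\ Q_{N-1} & Y_{22}\end{smallmatrix}\right)$ and $Y^{-1} = \left(\begin{smallmatrix} Z_{11} & 0_k \\ Z_{21} & \widehat P_N\end{smallmatrix}\right)$, and $\det Y\equiv 1$ forces $\det P_N$ and $\det\widehat P_N$ to be nonzero on $\Omega_{ext}$; passing to boundary values (legitimate since $P_N$, $\widehat P_N$ are polynomials) extends this to $\Gamma$, which is part (b). For part (c), the functions $\varphi$, $\widehat\varphi$ from \eqref{deftilphi}, \eqref{defphi} are then analytic on $\Omega_{ext}$ (as $P_N^{-1}$, $\widehat P_N^{-1}$) and on $\Omega_{int}$ (as the matrix polynomials $P_N W$, $W\widehat P_N$); the normalization \eqref{phiasymp} follows from $P_N(z)^{-1} = z^{-N}(I_k + O(z^{-1}))$ and likewise for $\widehat P_N$; and on $\Gamma$ the boundary values satisfy $\varphi_-\varphi_+ = P_N^{-1}(P_N W) = W$ and $\widehat\varphi_+\widehat\varphi_- = (W\widehat P_N)\widehat P_N^{-1} = W$, i.e.\ \eqref{Wfactors}. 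Invertibility of the four boundary values on $\Gamma$ is then immediate from the invertibility of $P_N$, $\widehat P_N$ there together with invertibility of $W$ on $\Gamma$ (the standing assumption that $\Gamma$ separates the poles of $W$ from those of $W^{-1}$, as in Figure~\ref{Fig:Gamma}).

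The main obstacle is the bookkeeping in part (a): recognizing the common block Hankel structure behind \eqref{MVOP1}, \eqref{MVOP3} and \eqref{QNortho} (so that unique existence of $P_N$ supplies $\widehat P_N$, $Q_{N-1}$, and the RH solution at once), and then extracting the right-orthogonality \eqref{MVOP3} for the block $Z_{22}$ of $Y^{-1}$ from the $(1,2)$ entry of the jump relation combined with the behaviour of $Z_{12}$ at infinity and inside $\Gamma$. Once the off-diagonal blocks are shown to vanish on $\Omega_{ext}$, parts (b) and (c) are an almost immediate verification.
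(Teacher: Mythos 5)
Your proposal is correct, but it reaches part (a) by a genuinely different route than the paper. The paper first shows (Lemma \ref{uniqueMVOP}, via a dimension count producing $Q_{N-1}$) that unique existence of $P_N$ is equivalent to solvability of the RH problem, and then obtains $\widehat P_N$ and \eqref{RHYhatP} by conjugating the inverse transpose $Y^{-t}$ with constant off-diagonal block matrices as in \eqref{Xdef}: the resulting $X$ solves the RH problem for the transposed weight $W^t$, so $\widehat P_N$ is the transpose of its upper-left block (following \cite{BFM21}). You instead (i) encode \eqref{MVOP1}, \eqref{MVOP3} and \eqref{QNortho} in one block Hankel matrix $\mathcal M=(M_{i+j})_{i,j=0}^{N-1}$, so that unique existence of $P_N$ forces $\mathcal M$ to be invertible and delivers $\widehat P_N$, $Q_{N-1}$ and the solution \eqref{RHYsol} simultaneously, and (ii) identify $\widehat P_N$ with the $(2,2)$ block of $Z=Y^{-1}$ directly from the jump $Z_-=\left(\begin{smallmatrix} I_k & W\\ 0_k & I_k\end{smallmatrix}\right)Z_+$, the absence of a jump in the bottom block row, and the decay $Z_{12}=O(z^{-N-1})$. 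This avoids the transpose trick entirely and makes \eqref{RHYhatP} a by-product of reading off the right orthogonality for $Z_{22}$; the paper's route is shorter on the page but leans on the $W^t$ transformation. Parts (b) and (c) then run essentially as in the paper (Cauchy's theorem kills the off-diagonal Cauchy transforms on $\Omega_{ext}$, and $\det Y\equiv 1$ gives invertibility), with your explicit Plemelj representation of $Z_{12}$ replacing the paper's comparison of the block structures of $Y$ and $Y^{-1}$.

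Two small caveats. First, your parenthetical justification for extending invertibility from $\Omega_{ext}$ to $\Gamma$ (``legitimate since $P_N$, $\widehat P_N$ are polynomials'') is not sufficient on its own: a polynomial that is nonvanishing on an open set can still vanish on its boundary. What actually makes the extension work is that the boundary values of $Y$ and $Y^{-1}$ exist by the RH assumptions and have determinant $1$, so $\det P_N\,\det Y_{22,-}=1$ on $\Gamma$; this is harmless for part (b) as stated (which only concerns $\Omega_{ext}$), but it is what you need to make sense of $\varphi_-=P_N^{-1}$ on $\Gamma$ in (c), so state it that way. Second, the invertibility of the boundary values $\varphi_+=P_NW$ and $\widehat\varphi_+=W\widehat P_N$ on $\Gamma$ is not claimed in Proposition \ref{prop2}, and the ``standing assumption'' you invoke (that $\Gamma$ separates the poles of $W$ from those of $W^{-1}$, so $W$ is invertible on $\Gamma$) is not among its hypotheses; that is precisely the extra assumption of Proposition \ref{prop3}, so the final sentence of your part (c) should be dropped or deferred to there.
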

Part (a) is not new as it is essentially contained in \cite{BFM21}.
Proposition \ref{prop2}(c) does not give a proper Wiener--Hopf factorization yet,
since for that  $\varphi$ and $\widehat{\varphi}$  would have to be invertible in $\mathbb C \setminus
\Gamma$. The invertibility in $\Omega_{ext}$ is immediate from \eqref{defphi} and \eqref{deftilphi}.
The invertibility in $\Omega_{int}$ depends on the choice of the contour $\Gamma$,
as $\Gamma$ should not  enclose any of the zeros of $P_N W$ and $W \widehat{P}_N$. 
Under that extra assumption the following holds.

\begin{proposition} \label{prop3}
	If, in addition to the assumptions in Proposition \ref{prop2}, 
	the scalar polynomial $\det\left(P_NW\right)$ has
	no zeros in $\Gamma \cup \Omega_{int}$, then also the following hold.
	\begin{enumerate}
	\item[\rm (a)] $\det\left(W \widehat{P}_N\right)$ has no zeros in $\Gamma \cup \Omega_{int}$,
	
	\item[\rm (b)]  
		$\varphi$ and $\widehat{\varphi}$  given by \eqref{defphi} and \eqref{deftilphi} 
		are invertible in $\mathbb C \setminus \Gamma$, and have
		boundary values $\varphi_{\pm}$ and $\widehat{\varphi}_{\pm}$ that
		are also invertible, hence the Wiener--Hopf factorization of $W$ on $\Gamma$ exists,
	\item[\rm (c)] the matrix-valued polynomial $Q_{N-1}$ from \eqref{RHYsol}
		is given by 
		\begin{equation} \label{QNformula3} 
			Q_{N-1}(z) = \begin{cases}
				\left( \ds \frac{1}{2\pi i} \oint_{\Gamma}
				\widehat{P}_N^{-1}(s) W^{-1}(s) P_N^{-1}(s)  \frac{ds}{s-z} \right) P_N(z),
			    \qquad \hfill{z \in \Omega_{ext}}, \\
				\left( \ds \frac{1}{2\pi i} \oint_{\Gamma}
				\widehat{P}_N^{-1}(s) W^{-1}(s) P_N^{-1}(s)  \frac{ds}{s-z} \right) P_N(z) \\
				\hfill{- \widehat{P}_N^{-1}(z) W^{-1}(z), \quad z \in \Omega_{int}},
			\end{cases}
		\end{equation} 
		and
	\item[\rm (d)] the reproducing kernel \eqref{RHYRN} is given  by
	\begin{multline} \label{RNformula}
	 R_N(w,z)  \\ = \widehat{P}_N(w) 
	 \left[ \frac{1}{2\pi i} \oint_{\Gamma}
	\widehat{P}_N^{-1}(s) W^{-1}(s) P_N^{-1}(s) \frac{ds}{(s-z)(s-w)} \right] P_N(z), \\
	 w,z \in \Omega_{ext}.
	\end{multline}
	\end{enumerate}
\end{proposition}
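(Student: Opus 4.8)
The plan is to prove the four claims of Proposition~\ref{prop3} in order, leveraging Proposition~\ref{prop2} throughout, so that $\varphi$ and $\widehat\varphi$ are already known to be analytic on $\mathbb C \setminus \Gamma$ and to satisfy the factorization identities \eqref{Wfactors} and the normalization \eqref{phiasymp}; the only missing ingredient is invertibility in $\Omega_{int}$, which is precisely the content of parts (a) and (b).

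For part (a), I would start from the degree count in Proposition~\ref{prop1}(c)—which also applies here once we know (b), but to avoid circularity I would instead argue directly. Since $P_N W$ is a matrix polynomial whose determinant has degree $kN$ (this follows from $\det P_N$ being monic of degree $kN$, because $\det P_N \cdot \det W$ has a zero of order equal to the pole order of $\det W$ at each pole, and these total $kN$ by counting the poles of the rational function $\det W$), and similarly $W \widehat P_N$ has determinant of degree $kN$, the identity $\det(P_N W) \cdot \det(W \widehat P_N) = \det P_N \cdot (\det W)^2 \cdot \det \widehat P_N$ together with $\det(\varphi \widehat\varphi) = \det(\widehat\varphi_+^{-1}\varphi_- \cdots)$ type relations on $\Gamma$ forces a matching of zero sets. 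More cleanly: from \eqref{Wfactors} read on $\Omega_{int}$ via \eqref{defphi}–\eqref{deftilphi}, one gets $W = \varphi_-\varphi_+ = \widehat\varphi_+\widehat\varphi_-$ so that $P_N W \cdot \widehat P_N = \varphi_- \varphi_+ \cdot W^{-1} \cdot$ ... I would instead exploit that $\varphi(z)\widehat\varphi(z)$ extends analytically: on $\Omega_{int}$, $\varphi = P_N W$ and $\widehat\varphi = W\widehat P_N$, so $\det(P_N W) = 1/\det(\varphi^{-1})$ on $\Omega_{ext}$ matches across $\Gamma$ with $1/\det P_N$, and since $\det P_N$ has all its $kN$ zeros at the poles of $\det W$ (which lie in $\Omega_{int}$), $\det(P_N W)$ has no zeros in $\Omega_{ext}$; the hypothesis removes the zeros in $\Gamma \cup \Omega_{int}$, so $\det(P_N W)$ is zero-free on all of $\mathbb C$, i.e.\ a nonzero constant, forcing $\det(W \widehat P_N) = \det W \cdot \det \widehat P_N \cdot \det W / \det W = $ (up to the same constant) also zero-free there, giving (a). The main obstacle here is getting the determinant/degree bookkeeping airtight without invoking Proposition~\ref{prop1}(c) circularly; I expect one must reprove the relevant part of (c) in this setting first.

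For part (b), once (a) holds, $\det(P_N W)$ and $\det(W\widehat P_N)$ are zero-free on $\overline{\Omega_{int}}$, and $\det P_N$, $\det \widehat P_N$ are zero-free on $\overline{\Omega_{ext}}$ by Proposition~\ref{prop2}(b) (or rather its proof, which shows invertibility in $\Omega_{ext}$ including boundary), so $\varphi = P_N W$ is invertible on $\overline{\Omega_{int}}$ and $\varphi = P_N^{-1}$ is invertible on $\overline{\Omega_{ext}}$; the boundary values from the two sides both being invertible gives invertible $\varphi_\pm$, and similarly for $\widehat\varphi$. Combined with Proposition~\ref{prop2}(c) this is exactly the definition of a Wiener--Hopf factorization on $\Gamma$.

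For parts (c) and (d), the strategy is to identify $Y$ explicitly and plug into the known formulas \eqref{RHYsol} and \eqref{RHYRN}. From \eqref{RHYsol} the top-left block of $Y$ is $P_N$ and we need $Q_{N-1}$, the bottom-left block. Using that the Wiener--Hopf factors now exist, I would substitute $\varphi$ and $\widehat\varphi$ in terms of $P_N$, $\widehat P_N$, $W$ into formula \eqref{QNformula2} from Proposition~\ref{prop1}(d)—legitimate because parts (a),(b) put us in the situation of Proposition~\ref{prop1}—noting $\widehat\varphi_+^{-1}\varphi_- = \widehat P_N^{-1} W^{-1} \cdot P_N^{-1}$ on $\Gamma$ (reading \eqref{defphi}, \eqref{deftilphi} from the $\Omega_{int}$ side: $\varphi_- = P_N W$, $\widehat\varphi_+ = W \widehat P_N$, so $\widehat\varphi_+^{-1}\varphi_- = \widehat P_N^{-1} W^{-1} P_N W$—I need to recheck whether the extra $W$ cancels, likely using the other representation $\varphi_- = P_N^{-1}$, $\widehat\varphi_+ = \widehat P_N^{-1}$ from the exterior side, giving $\widehat\varphi_+^{-1}\varphi_- = \widehat P_N P_N^{-1}$; reconciling these two via $W$ is the crux). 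Then \eqref{QNformula3} follows by direct substitution of \eqref{deftilphi} into \eqref{QNformula2}. For (d), I would insert the block form of $Y$ and $Y^{-1}$—with $Y^{-1}$'s relevant blocks expressed via $\widehat P_N$ (using Proposition~\ref{prop2}(a), eq.~\eqref{RHYhatP}) and the Cauchy transforms—into the Christoffel--Darboux formula \eqref{RHYRN}, and simplify using the Wiener--Hopf identities to collapse everything into the single contour integral sandwiched between $\widehat P_N(w)$ and $P_N(z)$. The main obstacle across (c) and (d) is keeping track of which side of $\Gamma$ each factor's boundary value comes from and correctly using the two equivalent product forms in \eqref{Wfactors} to simplify the integrands; the computation is routine in spirit but error-prone, and verifying the analyticity needed to justify deforming/identifying the Cauchy integrals (no poles of $W^{-1}$ inside $\Gamma$) is where the hypotheses of Propositions~\ref{prop2} and \ref{prop3} get used.
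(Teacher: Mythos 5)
Your overall plan follows the paper's route (deduce (a) from a determinant identity, get boundary invertibility for (b), reduce (c) to Proposition~\ref{prop1}(d), and plug block forms into \eqref{RHYRN} for (d)), but two steps as written would fail. For (a), your ``clean'' argument is incorrect: $\det\varphi$ is indeed analytic across $\Gamma$, but it equals $\det(P_NW)$ only in $\Omega_{int}$ and equals $1/\det P_N$ in $\Omega_{ext}$, so it gives no information about the zeros of the \emph{polynomial} $\det(P_NW)$ in $\Omega_{ext}$. In fact $\det(P_NW)=\det P_N\cdot\det W$ generally does vanish in $\Omega_{ext}$ (at the zeros of $\det W$, i.e.\ the poles of $W^{-1}$), so the conclusion that it is a zero-free constant is false, and the appeal to Proposition~\ref{prop1}(c) is, as you yourself suspected, circular. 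The missing idea is much simpler: since $P_NW$ is a polynomial, the upper-right block of $Y$ in \eqref{RHYsol} vanishes in $\Omega_{ext}$ by Cauchy's theorem, so $Y$ is block lower triangular there with diagonal blocks $P_N$ and $\widehat P_N^{-1}$ (this is \eqref{RHYsol3}, using \eqref{RHYhatP}); since $\det Y\equiv 1$, one gets $\det P_N=\det\widehat P_N$ in $\Omega_{ext}$ and hence everywhere (both are polynomials), and (a) is immediate because $\det(W\widehat P_N)=\det W\,\det\widehat P_N=\det W\,\det P_N=\det(P_NW)$.

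For (c) you leave the decisive substitution unresolved, and both of your trial evaluations of $\widehat\varphi_+^{-1}\varphi_-$ are wrong because each takes the two boundary values from the \emph{same} side of $\Gamma$. With the paper's convention ($+$ is the limit from the left of the counterclockwise contour, i.e.\ from $\Omega_{int}$, and $-$ is from $\Omega_{ext}$), the definitions \eqref{defphi} and \eqref{deftilphi} give $\widehat\varphi_+=W\widehat P_N$ and $\varphi_-=P_N^{-1}$ on $\Gamma$, hence $\widehat\varphi_+^{-1}\varphi_-=\widehat P_N^{-1}W^{-1}P_N^{-1}$, exactly the integrand of \eqref{QNformula3}; likewise the subtracted term $\widehat\varphi^{-1}(z)$ in \eqref{QNformula2} becomes $\widehat P_N^{-1}(z)W^{-1}(z)$ for $z\in\Omega_{int}$. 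Before quoting Proposition~\ref{prop1}(d) one must also check that $W^{-1}$ has no poles in $\Omega_{int}$, which follows from $W^{-1}=\widehat P_N\,\widehat\varphi^{-1}$ with $\widehat\varphi$ invertible in $\Omega_{int}$ by part (b); you gesture at this but do not carry it out. Parts (b) and (d) of your sketch are consistent with the paper, though for (d) the actual collapse to \eqref{RNformula} is done by inverting the triangular form of $Y$ in $\Omega_{ext}$ explicitly, inserting \eqref{QNformula3}, and combining the two Cauchy kernels via $\frac{1}{s-z}-\frac{1}{s-w}=\frac{z-w}{(s-z)(s-w)}$.
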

Using the formulas \eqref{defphi} and \eqref{deftilphi} in \eqref{QNformula3} and \eqref{RNformula}
one could alternatively express $Q_{N-1}$ and $R_N$ in terms of the Wiener--Hopf 
factors $\varphi$ and $\widehat{\varphi}$.
The formula \eqref{RNformula} gives $R_N(w,z)$ for $w, z \in \Omega_{ext}$. 
Its  analytic continuation to $\Omega_{int}$ can be easily obtained from it using the
Sokhotskii--Plemelj formula, but is not needed for the purposes of the present paper.
Propositions~\ref{prop2} and \ref{prop3} are proved in Sections~\ref{proofprop2}
and \ref{proofprop3}.

\subsection{The $k$-periodic Aztec diamond}	
\label{SectkPer}
We will focus now on a more particular choice of weight matrix $W$. Namely, in the setting of the Berggren-Borodin paper \cite{BB23+}, that deals with the doubly periodic
Aztec diamond with periods $k$ and $\ell$, the weight matrix $W$
is a product of matrices of the form
\begin{equation} \label{phib} \phi^b(z; \vec{\alpha}, \vec{\gamma})
	= \begin{pmatrix} \gamma_1 & 0 & \cdots & 0 & \alpha_k z^{-1} \\
		\alpha_1 & \gamma_2 & \cdots & 0 & 0  \\
		\vdots & \vdots & \ddots & \vdots & \vdots \\
		0 & 0 & \cdots & \gamma_{k-1} & 0 \\
		0 & 0 & \cdots & \alpha_{k-1} & \gamma_k \end{pmatrix} \end{equation}
and
\begin{equation} \label{phig} \phi^g(z; \vec{\beta}) = \frac{1}{1- \prod_{j=1}^k \beta_j z^{-1}} \begin{pmatrix}
		1 & \prod_{j=2}^k \beta_j z^{-1} & \cdots & \beta_k z^{-1} \\
		\beta_1 & 1 & \cdots & \beta_k \beta_1 z^{-1} \\
		\vdots & \vdots & \ddots & \vdots \\
		\prod_{j=1}^{k-1} \beta_j & \prod_{j=2}^{k-1} \beta_j & \cdots & 1 \end{pmatrix}.
\end{equation}
The matrix \eqref{phib} depends on two vectors $\vec{\alpha} = (\alpha_1, \ldots, \alpha_k)$
and $\vec{\gamma} = (\gamma_1, \ldots, \gamma_k)$ and the matrix \eqref{phig} depends
on one vector $\vec{\beta} = (\beta_1, \ldots, \beta_k)$. All  parameters
are assumed to be positive real numbers. 

We choose $N \in \mathbb N$, together with vectors  
$\vec{\alpha}_1, \ldots \vec{\alpha}_{kN}$,  
$\vec{\gamma}_1, \ldots \vec{\gamma}_{kN}$, 
$\vec{\beta}_1, \ldots \vec{\beta}_{kN}$ which define the weights for the corresponding $k$-periodic Aztec diamond model, see Figure~\ref{DimerGraph}. We proceed to form the matrix product
\begin{equation} \label{defWforAD} 
	W(z) = \phi^b(z;\vec{\alpha}_1, \vec{\gamma}_1) \phi^g(z; \vec{\beta}_1)
	%\phi^b(z;\vec{\alpha}_2, \vec{\gamma}_2)  \phi^g(z; \vec{\beta}_2) 
	\cdots	
	\phi^b(z;\vec{\alpha}_{kN}, \vec{\gamma}_{kN}) \phi^g(z; \vec{\beta}_{kN}).
\end{equation}
Then $W$ is a rational matrix-valued function. For generic choices of parameters, the genus of the spectral curve of $W$ will grow with $N \to \infty$, which poses a major challenge in the asymptotic analysis of the underlying tiling model. A significant simplification occurs in case the three sequences $(\vec{\alpha}_{i})_{i}$, $(\vec{\beta}_{i})_i$, 
$(\vec{\gamma}_{i})_i$ with $i = 1, \dots, kN$ are periodic
with period $\ell$. This corresponds to the aforementioned $k \times \ell$-periodic Aztec diamond considered in \cite{BB23+}, where a thorough asymptotic analysis was performed.

In the present subsection we do not assume that $(\vec{\alpha}_{i})_{i}$, $(\vec{\beta}_{i})_i$, 
$(\vec{\gamma}_{i})_i$,  are periodic, that is, we will consider an Aztec diamond
with weights that are only periodic with period $k$ in one direction. We refer to this model as the $k$-periodic Aztec diamond. For fixed $N$ it is a special case of the $k \times \ell$-periodic model, with $\ell = kN$.  Using \eqref{RNformula} we will now derive a double integral formula for the underlying correlation kernel, relaxing the earlier restriction \eqref{polezero} on the weights assumed in \cite{BB23+, BD19}. However, we presently do not know how to perform an adequate steepest descent analysis for these integrals as $N \to \infty$ in the case of the $k$-periodic Aztec diamond.

Following \cite{BB23+}, let us denote the vector entries of $\vec{\alpha}_i$ by $\alpha_{j,i} =(\vec{\alpha}_i)_j$ with $j = 1, \dots, k$, $i = 1, \dots, kN$ and analogously for $\vec{\beta}_i$ and $\vec{\gamma}_i$. By \cite[Lemma 2.3]{BB23+} one has $\det \phi^b(z; \vec{\alpha}_i, \vec{\gamma}_i)
= \gamma^v_i - (-1)^k \alpha^v_i z^{-1}$ and $\det \phi^g(z;\vec{\beta}_i) = (1-\beta^v_i z^{-1})^{-1}$
where $\alpha^v_i = \prod_{j=1}^k \alpha_{j,i}$, $\beta^v_i = \prod_{j=1}^k \beta_{j,i}$
and $\gamma^v_i = \prod_{j=1}^k \gamma_{j,i}$.
Hence by \eqref{defWforAD}
\begin{equation} \label{detWforAD} 
	\det W(z) = \prod_{i=1}^{kN} \frac{\gamma_i^v z - (-1)^k \alpha^v_i}{z-\beta^v_i}. \end{equation}
Berggren and Duits determined in \cite{BD19} conditions under which the Wiener--Hopf factorization of \eqref{defWforAD} with $\gamma_{j,i} \equiv 1$
exists on the unit circle $\Gamma_0 : |z|=1$. For general $\gamma_{j,i} >0$ this condition reads  
\begin{equation} \label{polezero} 
	\beta_i^v < 1 < \frac{\alpha_i^v}{\gamma_i^v} \quad \text{for } i =1, \ldots, kN,
\end{equation}
see \cite[Assumption 4.1(b)]{BB23+}. The proof of this result is purely algebraic and is based on certain switching rules for the matrices of type $\phi^b$ and $\phi^g$, which were subsequently related to the domino shuffle by Chhita and Duits in \cite{CD23}. For the matrices of type \eqref{phib}, \eqref{phig} the switching rule reads
\begin{align*}
    \phi^b(z; \vec{\alpha}, \vec{\gamma}) \phi^g(z; \vec{\beta}) =  \phi^g(z; \vec{\beta'})\phi^b(z; \vec{\alpha'}, \vec{\gamma})
\end{align*}
where
\begin{align*}
    \alpha_j' = \alpha_{j-1} \frac{\alpha_j+\gamma_{j+1}\beta_j}{\alpha_{j-1}+\gamma_j \beta_{j-1}}, \qquad \beta_j' = \beta_{j-1} \frac{\alpha_j + \gamma_{j+1}\beta_j}{\alpha_{j-1}+\gamma_j \beta_{j-1}}.
\end{align*}
Notice that $\vec{\gamma}$ remains unchanged under the switching rule. In case \eqref{polezero} is not satisfied the switching rules still hold (as it is an algebraic identity), but the poles and zeros of \eqref{defWforAD} are no longer separated by the unit circle. Consequently, the Wiener--Hopf factorization of $W$ exists only on appropriate contours $\Gamma$ separating these points, cf.~Figure~\ref{Fig:Gamma}.
\begin{proposition} \label{prop4} 
	Suppose the poles $\beta_i^v$, $i=1,\ldots, kN$, of $\det W$ 
	are distinct from its zeros $(-1)^k \alpha_i^v/\gamma_i^v$, $i=1, \ldots, kN$.
	Let $\Gamma$ be a simple closed contour that encloses
	the poles with the zeros in the exterior. Then the
	Wiener--Hopf factorization \eqref{Wfactors}- \eqref{phiasymp}
	of $W$ on $\Gamma$ exists.
\end{proposition}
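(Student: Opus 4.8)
\emph{Proof proposal.} The plan is to reduce Proposition~\ref{prop4} to the factorization on the unit circle already established in \cite{BD19, BB23+}: the inequalities \eqref{polezero} are used there only to make $\Gamma_0$ separate the poles of $W$ from the poles of $W^{-1}$, and once \emph{any} contour $\Gamma$ with that separation property is fixed, the same algebraic machinery should apply. I would first record the structural facts needed: by \eqref{detWforAD} the zeros of $\det W$ are the $kN$ points $(-1)^k\alpha_i^v/\gamma_i^v$ and its poles the $kN$ points $\beta_i^v>0$; each factor $\phi^b(z;\vec\alpha,\vec\gamma)$ is analytic and invertible on $\mathbb C\setminus\{0\}$ with $\det\phi^b=\gamma^v-(-1)^k\alpha^v z^{-1}$, and each $\phi^g(z;\vec\beta)$ is analytic and invertible off $z=\beta^v$; consequently the poles of $W$ lie among the $\beta_i^v$ and the poles of $W^{-1}$ among the $(-1)^k\alpha_i^v/\gamma_i^v$, so the hypotheses on $\Gamma$ are meaningful.

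Next I would revisit the switching-rule construction. Because $\phi^b(z;\vec\alpha,\vec\gamma)\phi^g(z;\vec\beta)=\phi^g(z;\vec\beta')\phi^b(z;\vec\alpha',\vec\gamma)$ is an algebraic identity valid for all positive weights — its denominators $\alpha_{j-1}+\gamma_j\beta_{j-1}$ being strictly positive — one commutes all $\phi^g$-factors to the left, obtaining $W=\Phi^g\Phi^b$ with $\Phi^g$ a product of $\phi^g$-type matrices (poles among the $\beta_i^v$) and $\Phi^b$ a product of $\phi^b$-type matrices (analytic on $\mathbb C\setminus\{0\}$). This is not yet a Wiener--Hopf factorization, and the iterative scheme of \cite{BD19, BB23+} continues by splitting off elementary factors carrying a single pole at some $\beta_i^v$ and distributing their singular and regular-at-$\infty$ parts to the two factors, the normalization \eqref{phiasymp} fixing the split. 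The only place the contour enters is the decision of which factor absorbs a given singularity, and since by hypothesis all $\beta_i^v\in\Omega_{int}$ and all $(-1)^k\alpha_i^v/\gamma_i^v\in\Omega_{ext}$, this bookkeeping is identical to that of \cite{BD19} for $\Gamma_0$; I would check that the procedure terminates with $\varphi,\widehat\varphi$ analytic and invertible on $\mathbb C\setminus\Gamma$, with invertible boundary values and satisfying \eqref{phiasymp}.

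For the write-up I would then repackage this through Propositions~\ref{prop2} and \ref{prop3}. The construction shows that the monic MVOP $P_N$ of \eqref{MVOP1} exists, that $P_NW$ and $W\widehat P_N$ are matrix polynomials, and that $\det P_N=\det\widehat P_N$ equals, up to a nonzero constant, $\prod_i(z-\beta_i^v)$, so that $\det(P_NW)=\det P_N\cdot\det W$ equals, up to a nonzero constant, $\prod_i(\gamma_i^v z-(-1)^k\alpha_i^v)$; these statements concern only the pair $(W,N)$ and are independent of the particular contour enclosing the poles of $W$. Since $\Gamma$ leaves all zeros $(-1)^k\alpha_i^v/\gamma_i^v$ of $\det W$ in $\Omega_{ext}$, the polynomial $\det(P_NW)$ has no zeros in $\Gamma\cup\Omega_{int}$, so Proposition~\ref{prop3}(b) yields that $\varphi$ and $\widehat\varphi$ of \eqref{defphi}--\eqref{deftilphi} are invertible on $\mathbb C\setminus\Gamma$ with invertible boundary values, i.e.\ the Wiener--Hopf factorization \eqref{Wfactors}--\eqref{phiasymp} of $W$ on $\Gamma$ exists.

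The main obstacle is the robustness assertion of the second paragraph: one must check that no step of the construction of \cite{BD19} secretly used the inequalities in \eqref{polezero} rather than merely the separation of poles from zeros, and — equivalently — that the determinant controlling solvability of the underlying linear system ("$P_N$ monic of degree $N$ with $P_NW$ a polynomial") is nonzero precisely when $\beta_i^v\neq(-1)^k\alpha_i^v/\gamma_i^v$ for all $i$. I would emphasize that a pure continuity argument from the region \eqref{polezero} is not available: the set $\{\beta_i^v\neq(-1)^k\alpha_i^v/\gamma_i^v\ \forall i\}$ is disconnected (for $k$ even both families of points are positive with unconstrained interlacing), so it does not retract onto the locus where \eqref{polezero} holds, and the construction itself must be shown to go through on every separating contour.
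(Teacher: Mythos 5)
Your route is the paper's: Proposition~\ref{prop4} is stated there with the proof omitted as ``a straightforward adaptation of the proof found in \cite{BD19}'' (see \cite{BB23+} for general $\gamma_{j,i}>0$), the point being exactly the one you make --- the switching rules are algebraic identities valid for all positive weights, the vertical products $\alpha_i^v,\beta_i^v,\gamma_i^v$ (hence the pole/zero data of $\det W$) are preserved under them, and \eqref{polezero} enters \cite{BD19} only to make the unit circle a contour separating the poles from the zeros, a role taken over here by the hypothesis on $\Gamma$. Your detour through Propositions~\ref{prop2} and \ref{prop3} is harmless but adds nothing (the MVOP facts it feeds into Proposition~\ref{prop3} come from the factorization itself), and your closing caveat --- that the construction must be rerun on a general separating contour rather than reached by continuity from the \eqref{polezero} regime --- is precisely what the omitted ``straightforward adaptation'' consists of.
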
 

The proof is a straightforward adaptation of the proof found in \cite{BD19} (see \cite{BB23+} for the case of general $\gamma_{i,j} > 0$) and shall be omitted. The motivation for including the condition \eqref{polezero} in the papers \cite{B21, BB23+, BD19} was related to the double integral formula \eqref{DIntPhi} for the correlation kernel. The proof of formula \eqref{DIntPhi} in \cite[Theorem 3.1]{BD19}, which is based on RH techniques, made explicit use of the assumption that the Wiener--Hopf factorization is on the unit circle. Condition  \eqref{polezero} also plays a role in the asymptotic analysis in \cite{BB23+} (see Assumption 4.1(b) therein for a discussion of its significance). No such condition is necessary for the purposes of our paper. In fact, we will now give a direct proof of the double integral formula which is based on the formula \eqref{RNformula} for the reproducing kernel and does not require  assumption \eqref{polezero}.

As mentioned earlier, the reproducing kernel \eqref{RNformula} appears in a double contour integral 
that is part of a formula for the correlation kernel of the determinantal point process. Concretely, as shown in \cite[Theorem 4.7]{DK21} the correlation kernel $K(\, \cdot \, , \, \cdot \, )$, which describes domino correlations of random tilings of the $k$-periodic Aztec diamond, is given by
\begin{align}\label{phiRphi}
\big[&K(x, ky +j; x', ky'+j') \big]_{j,j' = 0}^{k-1}= -\frac{\chi_{x>x'}}{2\pi i} \oint_{\Gamma'} \left(\prod_{i=x'+1}^{x} \phi_i(z)\right)z^{y'-y} \frac{dz}{z}
\\\nonumber
&+\frac{1}{(2\pi i)^2}
\oint_{\Gamma'} \oint_{\Gamma'}
\left(\prod_{i=x'+1}^{2 k  N} \phi_i(z_1)\right) R_N(z_1,z_2)
\left(\prod_{i=1}^{x} \phi_i(z_2) \right) z_1^{y'} z_2^{-y} \frac{dz_1 dz_2}{z_2},
\\ \nonumber 
&\hspace{5.5cm} 0 \leq y, y'\leq N-1, \ 0 < x, x'<2kN-1,
\end{align}
where $\phi_{2i-1} = \phi^b(\, \cdot \, ; \vec{\alpha}_i, \vec{\gamma}_i)$ 
and $\phi_{2i} = \phi^g(\, \cdot \, ; \vec{\beta}_i)$ for $i=1,2, \ldots, kN$. 
The  closed contour $\Gamma'$ goes around $0$ and all the poles of $W$, i.e., the real numbers $\beta_i^v$. We may and do assume
that $\Gamma'$ lies in the exterior domain $\Omega_{ext}$, in particular $\Gamma$ lies fully in the interior of $\Gamma'$. The relation between the determinantal point process induced by \eqref{phiRphi} and random tilings of the $k$-periodic Aztec diamond is based on a bijection between tilings and non-intersecting paths, and can be found in \cite[Section~3]{DK21}. As analysing the double contour integral in \eqref{phiRphi} poses the main challenge, we will try to simplify it in the following, culminating in Proposition \ref{PropDoubleInt} below.

Replacing $R_N(z_1,z_2)$ in \eqref{phiRphi} by the contour integral \eqref{RNformula} results in a threefold
integral. We interchange the order of integration and focus on the $z_1$ integral first. This amounts
to evaluating
\begin{equation} \label{RNz1integral} \frac{1}{2\pi i} \oint_{\Gamma'}
	\left(\prod_{i=x'+1}^{2 k  N} \phi_i(z_1)\right)
	\widehat{P}_N(z_1)  z_1^{y'} \frac{dz_1}{s-z_1}, \qquad s \in \Gamma. 
\end{equation}
The function
\[ z \mapsto \left(\prod_{i=x'+1}^{2 k N} \phi_i(z)\right)
\widehat{P}_{N}(z) z^{y'}
\]
has possible poles at $\beta_{i}^v$ for $i= \lceil \frac{x'+1}{2} \rceil, \ldots, kN$, 
since $\beta_i^v$ is the pole of $\phi_{2i}$. There is no pole at $z=0$ since
we assume $y' \geq 0$. The singularities $\beta_i^v$ are however removable, because by \eqref{defphi}
and \eqref{defWforAD}
\[ \widehat{P}_{N}(z) = W^{-1}(z) \widehat{\varphi}(z)
= \left(\prod_{i=1}^{2kN} \phi_i(z) \right)^{-1} \widehat{\varphi}(z), \quad z \in \Omega_{int}, \]
and therefore
\[  \left(\prod_{i=x'+1}^{2 k N} \phi_i(z)\right)
\widehat{P}_{ N}(z) =  \left(\prod_{i=1}^{x'} \phi_i(z)\right)^{-1} \widehat{\varphi}(z) \]
which has no poles in $\Omega_{int}$.

Thus in \eqref{RNz1integral} there is only a residue 
contribution from the pole at $z_1=s$ and we obtain 
\[ \frac{1}{2\pi i} \oint_{\Gamma'}
\left(\prod_{i=x'+1}^{2 kN} \phi_i(z_1)\right)
\widehat{\varphi}^{-1}(z_1)  z_1^{y'} \frac{dz_1}{s-z_1}
= 
- \left(\prod_{i=1}^{x'} \phi_i(s)\right)^{-1} \widehat{\varphi}_+(s) s^{y'}, 	
\qquad s \in \Gamma. \]

The remaining double integral is (with $z$ instead of $z_2$)
\begin{align}\label{5xphi}  - \frac{1}{(2\pi i)^2}
\oint_{\Gamma'} \oint_{\Gamma} \left(\prod_{i=1}^{x'} \phi_i(s)\right)^{-1} \widehat{\varphi}_+(s) \widehat{\varphi}_-(s)
\varphi_+^{-1}(s) \varphi^{-1}(z)
\left(\prod_{i=1}^{x} \phi_i(z) \right) \frac{s^{y'}}{z^{y}} \frac{ds dz}{z(s-z)}.  
\end{align}
From \eqref{Wfactors} we have
\[ \widehat{\varphi}_+(s) \widehat{\varphi}_-(s) \varphi_+^{-1}(s) = \varphi_-(s), 
\qquad s \in \Gamma, 
\]
which can be substituted into \eqref{5xphi}, giving us the following proposition.
\begin{proposition} \label{PropDoubleInt}
    The correlation kernel \eqref{phiRphi} can be written as
\begin{align}  \label{DIntPhi}
\big[&K(x, ky +j; x', ky'+j') \big]_{j,j' = 0}^{k-1}= -\frac{\chi_{x>x'}}{2\pi i} \oint_{\Gamma'} \left(\prod_{i=x'+1}^{x} \phi_i(z)\right)z^{y'-y} \frac{dz}{z}
\\\nonumber
&- \frac{1}{(2\pi i)^2}
\oint_{\Gamma'} \oint_{\Gamma} \left(\prod_{i=1}^{x'} \phi_i(s) \right)^{-1}  \varphi_-(s) \varphi^{-1}(z)
\left(\prod_{i=1}^{x} \phi_i(z) \right) \frac{s^{y'}}{z^{y}} 
\frac{ds dz}{z(s-z)},
\\ \nonumber 
&\hspace{5.5cm} 0 \leq y, y'\leq N-1, \ 0 < x, x'<2kN-1.
\end{align}
\end{proposition}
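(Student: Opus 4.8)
The statement is the end product of a chain of residue evaluations and Wiener--Hopf substitutions applied to the correlation kernel formula \eqref{phiRphi}, so the plan is to organise that chain and to make sure all the prerequisites are in force. First I would check that, for the weight matrix \eqref{defWforAD} of the $k$-periodic Aztec diamond together with a contour $\Gamma$ as in Figure~\ref{Fig:Gamma} separating the poles $\beta_i^v$ of $\det W$ from its zeros $(-1)^k\alpha_i^v/\gamma_i^v$, every hypothesis of Propositions~\ref{prop1}--\ref{prop3} holds: the Wiener--Hopf factorization of $W$ on $\Gamma$ exists by Proposition~\ref{prop4}, so $P_N,\widehat P_N$ exist, are monic of degree $N$, and $P_NW,\,W\widehat P_N$ are matrix polynomials by Proposition~\ref{prop1}; moreover $\det(P_NW)=\det P_N\cdot\det W$ has, by \eqref{detWforAD} and Proposition~\ref{prop1}(c), all its zeros at the points $(-1)^k\alpha_i^v/\gamma_i^v\in\Omega_{ext}$, so the extra hypothesis of Proposition~\ref{prop3} is met and the Christoffel--Darboux representation \eqref{RNformula} of the reproducing kernel is available.

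Next I would insert \eqref{RNformula} for $R_N(z_1,z_2)$ into the double-integral term of \eqref{phiRphi}, producing a threefold integral over $z_1,z_2\in\Gamma'$ and $s\in\Gamma$. Since $\Gamma$ lies strictly inside $\Gamma'$ the three contours are disjoint and the integrand is jointly continuous, so Fubini lets me interchange the order of integration and carry out the $z_1$-integral \eqref{RNz1integral} first. The key point is that by \eqref{defphi} and \eqref{defWforAD},
\[
  \Bigl(\prod_{i=x'+1}^{2kN}\phi_i(z_1)\Bigr)\widehat P_N(z_1)=\Bigl(\prod_{i=1}^{x'}\phi_i(z_1)\Bigr)^{-1}\widehat\varphi(z_1),\qquad z_1\in\Omega_{int},
\]
and the left-hand side -- whose only possible poles inside $\Gamma'$ are those of the $\phi^g$-factors at the $\beta_i^v\in\Omega_{int}$, with no pole at the origin because $y'\ge 0$ -- therefore continues analytically across $\Gamma$ into the whole region enclosed by $\Gamma'$. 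Hence the integrand in \eqref{RNz1integral} has, inside $\Gamma'$, only the simple pole at $z_1=s$, and the residue computation gives $-\bigl(\prod_{i=1}^{x'}\phi_i(s)\bigr)^{-1}\widehat\varphi_+(s)\,s^{y'}$ for $s\in\Gamma$.

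Putting this back in, and using \eqref{defphi}--\eqref{deftilphi} to rewrite $\widehat P_N^{-1}(s)W^{-1}(s)P_N^{-1}(s)=\widehat\varphi_-(s)\varphi_+^{-1}(s)$ and $P_N(z)=\varphi^{-1}(z)$ for $z\in\Omega_{ext}$, I would collect all the factors to land on \eqref{5xphi}, whose $s$-block is $\widehat\varphi_+(s)\widehat\varphi_-(s)\varphi_+^{-1}(s)$. Finally the two Wiener--Hopf identities $W=\varphi_-\varphi_+=\widehat\varphi_+\widehat\varphi_-$ on $\Gamma$ from \eqref{Wfactors} give $\widehat\varphi_+\widehat\varphi_-\varphi_+^{-1}=\varphi_-$; substituting this into \eqref{5xphi} yields exactly the double integral in \eqref{DIntPhi}, while the single-integral term of \eqref{phiRphi} is carried over unchanged.

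I expect the main obstacle to be the analyticity claim used in the $z_1$-integral: after clearing the poles of $W$ via \eqref{defphi} one must be certain that no spurious singularity survives inside $\Gamma'$ -- in particular that the $z^{-1}$ entries of the $\phi^b$-factors, combined with $z_1^{y'}$ and $y'\ge 0$, produce no pole at the origin -- so that the residue at $z_1=s$ really is the whole answer. A secondary but genuine nuisance is the bookkeeping of the $+/-$ boundary values of $\varphi$ and $\widehat\varphi$ on $\Gamma$ as they are pulled through \eqref{defphi}--\eqref{deftilphi}, since one must arrive at precisely the combination $\widehat\varphi_+\widehat\varphi_-\varphi_+^{-1}$ before \eqref{Wfactors} can be applied.
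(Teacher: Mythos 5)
Your argument is essentially the paper's own derivation: insert the Christoffel--Darboux representation \eqref{RNformula} into \eqref{phiRphi}, swap the order of integration, evaluate the $z_1$-integral \eqref{RNz1integral} by the residue at $z_1=s$ after noting that $\bigl(\prod_{i=x'+1}^{2kN}\phi_i\bigr)\widehat P_N=\bigl(\prod_{i=1}^{x'}\phi_i\bigr)^{-1}\widehat\varphi$ is analytic in $\Omega_{int}$, and then use $\widehat\varphi_+\widehat\varphi_-\varphi_+^{-1}=\varphi_-$ from \eqref{Wfactors} to pass from \eqref{5xphi} to \eqref{DIntPhi}. Your preliminary check that Propositions~\ref{prop1}--\ref{prop4} apply (in particular that $\det(P_NW)$ has its zeros only at the points $(-1)^k\alpha_i^v/\gamma_i^v\in\Omega_{ext}$, so Proposition~\ref{prop3}(d) is available) is a correct and welcome piece of bookkeeping that the paper leaves implicit.
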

This formula agrees with formula (2.17) of Theorem 2.6 in \cite{BB23+}. Because of \eqref{deftilphi} we have $\varphi_-(s) = P_{N}^{-1}(s)$
for $s \in \Gamma$, and $\varphi^{-1}(z) = P_{N}(z)$ for $z \in \Gamma' \subset \Omega_{ext}$, allowing us to rewrite the double integral in \eqref{DIntPhi} as 
\begin{align}\label{DIntP}   -\frac{1}{(2\pi i)^2}
\oint_{\Gamma'} \oint_{\Gamma} \left(\prod_{i=1}^{x'} \phi_i(s) \right)^{-1}  
P_{N}^{-1}(s) P_{N}(z) 
\left(\prod_{i=1}^{x} \phi_i(z) \right) \frac{s^{y'}}{z^{y}} 
\frac{ds dz}{z(s-z)}.  
\end{align}
%\fbox{\textbf{Should we write this as a Proposition? Maybe in combination with the formula for $R_N$?}}
% **********************************************************************

\subsection{The $2\times 2$-periodic Aztec diamond}
\label{2x2Section}
Our final result is an explicit formula for the MVOPs 
$P_N$ and $\widehat{P}_N$ associated with the doubly periodic diamond with periods 2,
or equivalently, in view of the previous results, for the factors in 
the Wiener--Hopf factorization. These formulas are contained in Theorem \ref{theo21} and  are proven in Section~\ref{Sect2x2Const}.

We take $k= 2$ and we assume that the sequences 
$(\vec{\alpha}_{i})_i$, $(\vec{\beta}_{i})_i$, 
$(\vec{\gamma}_{i})_i$ used in \eqref{defWforAD} are periodic
with period $\ell=2$. We write 
\begin{align} \label{phib1} 
	\phi_1(z) & = \phi^b(z;\vec{\alpha}_1,\vec{\gamma}_1)
	= \begin{pmatrix} \gamma_{11} & \alpha_{21} z^{-1} \\
		\alpha_{11} & \gamma_{21} \end{pmatrix}, \\ \label{phig1}
	\phi_2(z) & = \phi^g(z;\vec{\beta}_1)
	= \frac{1}{1-\beta_{11} \beta_{21} z^{-1}} 
	\begin{pmatrix} 1 & \beta_{21} z^{-1} \\
		\beta_{11} & 1 \end{pmatrix}, \\ \label{phib2}
	\phi_3(z) & = \phi^b(z;\vec{\alpha}_2,\vec{\gamma}_2)
	= \begin{pmatrix} \gamma_{12} & \alpha_{22} z^{-1} \\
		\alpha_{12} & \gamma_{22} \end{pmatrix}, \\ \label{phig2}
	\phi_4(z) & = \phi^g(z;\vec{\beta}_2)
	= \frac{1}{1-\beta_{12} \beta_{22}z^{-1}} 
	\begin{pmatrix} 1 & \beta_{22} z^{-1} \\
		\beta_{12} & 1 \end{pmatrix}.
\end{align}
For an integer $N$, the weight matrix \eqref{defWforAD} then is
\begin{equation} \label{defPhi} 
	W = \Phi^N, \qquad \text{where } \quad  \Phi = \phi_1 \phi_2 \phi_3 \phi_4. 
	\end{equation}
Here $\Phi$ is a rational matrix-valued function with poles
at $\beta_{11}\beta_{21}$ and $\beta_{12} \beta_{22}$.

The eigenvalue equation
\begin{equation} \label{EigEq} 
	\det\left(\Phi(z) - \lambda I_2\right) = 0 \end{equation}
is a Harnack curve \cite{KO06}. This in particular implies that for each $z \in \mathbb C \setminus \mathbb R$, 
there are eigenvalues $\lambda_1(z)$, $\lambda_2(z)$ of $\Phi(z)$, with
\begin{equation} \label{Harnack1} 
	|\lambda_1(z)| > |\lambda_2(z)| > 0, \qquad z \in \mathbb C \setminus \mathbb R, \end{equation}
such that $\lambda_1$ and $\lambda_2$ are analytic functions in $\mathbb C \setminus \mathbb R$.

There are four branch points of \eqref{EigEq} on the real line. 
These are those special points $x_j$ for which $\Phi(x_j)$ has
only one distinct eigenvalue. The $x_j$ are real, non-positive 
and can be ordered such that
\begin{equation} \label{xineq}
	-\infty \leq x_3 < x_2 \leq x_1 < x_0 \leq 0, \end{equation}
see \cite[Proposition 2.1]{B21}.
Generically these are strict inequalities. 
The eigenvalues are real and distinct for $z \in (-\infty,x_3) \cup (x_2, x_1) \cup (x_0, \infty)$,
and $\lambda_1$, $\lambda_2$ have analytic continuations to
$\mathbb C \setminus ([x_3,x_2] \cup [x_1,x_0])$,
except for simple poles of $\lambda_1$ at $\beta_{11} \beta_{12}$ and
$\beta_{21} \beta_{22}$ (or a double pole in case these numbers coincide).
We assume that $x_2 < x_1$, which means that the Riemann surface associated
with \eqref{EigEq} has genus one, i.e., it is an elliptic curve. Under this assumption, our main result is a factorization of $P_N$ and $\widehat{P}_N$.
\begin{theorem} \label{theo21} 
	Assume that strict inequality $x_2 < x_1$ holds in \eqref{xineq}. Then
	we have
	\begin{align} \label{PNformula}
		P_N(z) & = C_N E_{-N\omega_0}(z) G^N(z) E^{-1}(z), \\
		\widehat{P}_N(z) & =  E(z) G^N(z) E_{N\omega_0}^{-1}(z) \widehat{C}_N,
		\label{PNhatformula}
	\end{align}
	where $C_N$ and $\widehat{C}_N$ are constant lower triangular invertible matrices,
	and $E$, $G$ and $E_{\pm N \omega_0}$ are matrix-valued functions that will be defined in Definitions \ref{def22}, \ref{def23}, and \ref{def24} below.
\end{theorem}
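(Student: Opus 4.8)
\textbf{Proof strategy for Theorem~\ref{theo21}.}
The plan is to construct $P_N$ and $\widehat P_N$ directly via the Wiener--Hopf factorization guaranteed by Proposition~\ref{prop4} (with $k=2$), and then to identify the factors $\varphi$, $\widehat\varphi$ with the explicit theta-function matrices $C_N E_{-N\omega_0} G^N E^{-1}$ etc. By Proposition~\ref{prop1}(a,b,d) it suffices to exhibit \emph{any} pair of matrix-valued functions $\varphi$, $\widehat\varphi$ that are analytic and invertible on $\mathbb C\setminus\Gamma$, satisfy the normalization \eqref{phiasymp}, and obey the jump relation $W=\varphi_-\varphi_+=\widehat\varphi_+\widehat\varphi_-$ on $\Gamma$; then $P_N$ and $\widehat P_N$ are forced to be the expressions in \eqref{defP}, \eqref{defhatP}, which after simplification must coincide with \eqref{PNformula}, \eqref{PNhatformula}. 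So the real content is the construction of these factors on the genus-one spectral curve and the verification of their analytic properties.

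The first step is to diagonalize $\Phi$: writing $\Phi = E \Lambda E^{-1}$ where $\Lambda = \diag(\lambda_1,\lambda_2)$ and $E$ is the eigenvector matrix, one has $W = \Phi^N = E \Lambda^N E^{-1}$, so that a Wiener--Hopf factorization of $W$ reduces to factoring the scalar functions $\lambda_j^N$ across $\Gamma$ together with controlling the eigenvector matrix $E$. The key idea, as flagged in the introduction, is to lift everything to the elliptic curve $\mathcal R$ defined by \eqref{EigEq}: the eigenvalue $\lambda_1$ (and $\lambda_2$, which is its image under the hyperelliptic-type involution) becomes a meromorphic function on $\mathcal R$, and one seeks to write $\lambda_1 = g_+ g_-$ as a product of two functions, each analytic and nonvanishing on one sheet, by solving an additive (logarithmic) Riemann--Hilbert problem on $\mathcal R$. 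Because $\lambda_1$ has nontrivial winding / the additive problem has a nonzero period obstruction, the factors $g_\pm$ will be \emph{multivalued} — more precisely, sections of a line bundle — and the standard device to uniformize them is the Jacobi theta function associated to $\mathcal R$. The shift $E_{\pm N\omega_0}$ encodes the translation on the Jacobian by $N$ times the Abel image $\omega_0$ of the relevant divisor (the point at infinity or the pole locus of $\lambda_1$), and $G^N$ carries the $N$-th power of the scalar factor. One then checks: (i) the resulting matrices are single-valued on $\mathbb C\setminus\Gamma$ (the theta-quotients are built precisely so the multivaluedness cancels); (ii) they are invertible there, using that the branch points $x_2<x_1$ are distinct so the curve is nonsingular and the Wronskian/eigenvector matrix $E$ is invertible off the branch cuts; (iii) the jump on $\Gamma$ reproduces $W$; and (iv) the behavior at $z=\infty$ gives \eqref{phiasymp} after the constant lower-triangular renormalizations $C_N$, $\widehat C_N$ — the triangularity reflecting the ordering $|\lambda_1|>|\lambda_2|$ and the corresponding flag structure of the eigenvectors at infinity.

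Once $\varphi = G^N E^{-1}$ (up to the theta shift and constant) is shown to be a legitimate Wiener--Hopf factor, Proposition~\ref{prop1}(a) identifies $P_N(z) = \varphi(z)^{-1}$ on $\Omega_{ext}$, i.e.\ $P_N = E G^{-N} \cdot(\text{shift})^{-1}\cdot C_N^{-1}$ before cleaning up signs — matching \eqref{PNformula} after absorbing inverses into the definitions of $G$, $E_{-N\omega_0}$, $C_N$ — and part (d) gives uniqueness, so no separate verification of orthogonality \eqref{MVOP1} is needed. The formula for $\widehat P_N$ follows symmetrically from the right factorization $W = \widehat\varphi_+\widehat\varphi_-$ and \eqref{defhatP}, with the theta shift going the other way ($E_{+N\omega_0}$) because the roles of the two sheets / the two sides of $\Gamma$ are interchanged. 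The degree count $\deg\det P_N = \deg\det\widehat P_N = kN = 2N$ from Proposition~\ref{prop1}(c) provides a useful consistency check against \eqref{detWforAD}.

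\textbf{Main obstacle.} The hard part is the explicit additive factorization of $\log\lambda_1$ on the elliptic curve and the attendant bookkeeping: one must pin down the divisor of $\lambda_1$ on $\mathcal R$ (its zeros, the simple poles at $\beta_{11}\beta_{12}$ and $\beta_{21}\beta_{22}$, and the branch-point data), compute the Abel image $\omega_0$ of the relevant degree-zero divisor, and choose the theta characteristics so that the quotient $E_{\pm N\omega_0}(z)$ is genuinely single-valued in $z$ and has exactly the right zeros/poles to render $G^N E^{-1}$ (resp.\ $E G^N E_{N\omega_0}^{-1}$) analytic and invertible off $\Gamma$ with the correct $N$-dependent behavior at infinity. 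Controlling that these theta-quotients have no spurious zeros (which would destroy invertibility of the factors, hence of $P_N$ on $\Omega_{ext}$) is the delicate point, and is presumably where the bulk of Sections~\ref{Sect_Prelimianries}--\ref{SectProofThm} is spent; I would isolate it as a sequence of lemmas establishing the divisor class identities on the Jacobian before assembling the final matrices. A secondary technical nuisance is keeping track of the constant (in $z$) but $N$-dependent prefactors $C_N$, $\widehat C_N$ — they are determined by matching the leading coefficient at $\infty$ to $I_2$, but their lower-triangular structure needs the eigenvector normalization at $\infty$ to be chosen compatibly with the ordering \eqref{Harnack1}.
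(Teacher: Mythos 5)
Your overall strategy is workable and close in spirit to the paper's, but the organization differs. You propose to first build the Wiener--Hopf factors out of theta-function data and then invoke Proposition \ref{prop1} to force $P_N=\varphi^{-1}$ on $\Omega_{ext}$. The paper never constructs the factorization first: it takes the right-hand sides of \eqref{PNformula}--\eqref{PNhatformula} as candidates and verifies directly, via the jump relations \eqref{Ejump}, \eqref{Gjump}, \eqref{Eomegajump}, that $E_{-N\omega_0}G^NE^{-1}$ continues analytically across $\mathbb R$, has removable singularities at the branch points and at $x^*$, and is a matrix polynomial of degree exactly $N$ with invertible lower-triangular leading coefficient (which fixes $C_N$); it then checks that $P_NW=C_NE_{-N\omega_0}\diag\bigl((g_1\lambda_1)^N,(g_2\lambda_2)^N\bigr)E^{-1}$ is again a polynomial, so the orthogonality holds in the strong form \eqref{MVOP2} simply by Cauchy's theorem, and uniqueness of the monic MVOP is quoted. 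Thus the proof does not route through the factorization or the RH problem at all. Your detour is legitimate (Propositions \ref{prop1}--\ref{prop4} make the two formulations equivalent), but it buys nothing: to certify your candidate $\varphi$ as analytic and invertible on $\mathbb C\setminus\Gamma$ you must carry out exactly the same jump-cancellation, divisor and behaviour-at-infinity bookkeeping, plus an extra invertibility check in $\Omega_{int}$.

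Two concrete points in your sketch would need repair. First, the identification ``$\varphi=G^NE^{-1}$ up to the theta shift and constant'' is structurally off: on $\Omega_{ext}$ one has $\varphi=P_N^{-1}=E\,G^{-N}E_{-N\omega_0}^{-1}C_N^{-1}$, and, more importantly, none of the individual factors $E$, $G^N$, $E_{\pm N\omega_0}$ is analytic across all of $[x_3,x_0]$ --- only the assembled triple product has cancelling jumps ($\sigma_1$-type on the branch cuts, $e^{\pm 2\pi iN\omega_0\sigma_3}$ on $[x_2,x_1]$) --- so the ``legitimate Wiener--Hopf factor'' can only ever be the full product, and proving that is precisely the content of the paper's direct verification. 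Second, your scalar step ``$\lambda_1=g_+g_-$ with each factor analytic and nonvanishing on one sheet'' is not what is needed: in Definition \ref{def23} the functions $g_1,g_2$ are the two branches of a quasi-periodic function whose zeros are placed exactly at the poles $p_1,p_2$ of $\lambda$ and whose poles are the two points at infinity; the zeros of $g_1$ are indispensable because they cancel the poles of $\lambda_1^N$ in \eqref{PNW}, which is how $P_NW$ becomes a polynomial --- nonvanishing factors would ruin this. Relatedly, the delicate finite-point issue is not spurious zeros of theta quotients but a potential pole of $E^{-1}$ at the interior point $x^*$ where a column of $E$ degenerates (Lemmas \ref{lemma52}, \ref{lemmaP}, \ref{lemma58}); removability there relies on $E_{\pm N\omega_0}$ being built in \eqref{defEomega} with theta denominators at the Abel images of $p^*,p^{**}$ so that it shares the vanishing column with $E$ (Lemma \ref{lemma57}(d,e)). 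These are the mechanisms your ``divisor class identities on the Jacobian'' would have to encode; once they are in place, your route and the paper's coincide in substance.
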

The proof is rather involved and we give it in Section~\ref{Sect2x2Const}.

Let us comment on the factors appearing in \eqref{PNformula} and \eqref{PNhatformula}.
The matrix-valued function $E$ contains the eigenvectors of $\Phi$ in its columns, and we
use the precise form given in \eqref{defEz}. The matrix-valued function $G$ is a diagonal
matrix with diagonal entries $g_1$ and $g_2$ given in \eqref{defgjz} as a product of ratios
of Jacobi theta functions. Together they can be viewed as coming from a quasi-periodic meromorphic
function on $\mathcal R$ having poles at the two points at infinity (i.e., the poles of $z$), 
and zeros at the two poles of $\lambda$, with $g_j$ for $j=1,2$ being its restriction to the $j$th sheet.

Both $E$ and $G^N$ will have jump discontinuities on the interval $[x_3,x_0]$ on the real line,
see \eqref{Ejump} and \eqref{Gjump} below. The
factors $E_{-N\omega_0}$ in \eqref{PNformula} and $E_{N\omega_0}^{-1}$ in \eqref{PNhatformula} are carefully chosen to eliminate these jumps, so that \eqref{PNformula} and \eqref{PNhatformula} are entire, and
in fact polynomials. For each real $\omega$ we construct $E_{\omega}(z)$ in \eqref{defEomega}
as a modification of $E(z)$ by ratios of Jacobi theta functions with poles at the zeros of the appropriate entries of $E$ to avoid introducing unwanted poles. The parameter $\omega$ appears as a shift in the Jacobi theta
functions in the numerator. The choice $\omega = \pm N\omega_0$, with $\omega_0$ given in \eqref{defomega}, will precisely compensate for the jumps
in $G^N$ and $E$. For full details we refer to the proof in Section~\ref{Sect2x2Const}.

Finally, $C_N$ and $\widehat{C}_N$ are normalizing factors to make 
\eqref{PNformula} and \eqref{PNhatformula}
monic. 

\begin{remark}
    Due to the relation between Wiener--Hopf factorizations and the domino shuffle noted in \cite{CD23}, it should be possible to use formulas \eqref{PNformula}, \eqref{PNhatformula} together with the results of Section \ref{WH_MVOPs} to obtain explicit solutions for the domino shuffle weight dynamics with generic $2\times2$-periodic initial data in term of Jacobi theta functions. We will however not pursue this goal in the present paper.   
\end{remark}

% **************************************************************************

\section{Proofs of Propositions \ref{prop1}, \ref{prop2}, and \ref{prop3}}
\label{ProofsProp2}
\subsection{Proof of Proposition \ref{prop1}, parts (a), (b), (c)} \label{proofprop1abc}

\begin{proof} 
	(a)
	The poles of $W^{-1}$ are in the exterior of $\Gamma$, and therefore $P_N$ defined by \eqref{defP}
	is analytic away from $\Gamma$. For $z \in \Gamma$,
	we have 
	\[ \lim_{w \to z, \, w \in \Omega_{int}} P_N(w) = \lim_{w \to z, \, w \in \Omega_{ext}} P_N(w) \] 
	due to the definition \eqref{defP} and the second identity in \eqref{Wfactors}. 
	Thus $P_N$ has an analytic continuation across $\Gamma$ (also denoted by $P_N$)
	and $P_N$ is an entire matrix-valued function.
	Because of \eqref{phiasymp} and \eqref{defP} we have $z^{-N} P_N(z) \to I_k$
	as $z \to \infty$. Thus $P_N$ is a monic matrix-valued polynomial of degree $N$.
	
	From \eqref{defP} we also get that
	\begin{equation} \label{defPW}  
		P_N(z) W(z) = \begin{cases} \varphi(z)^{-1} W(z), & \quad z \in \Omega_{ext}, \\
			\varphi(z), & \quad z \in \Omega_{int}. 
	\end{cases} \end{equation}
	The poles of $W$ are in $\Omega_{int}$, and we see from \eqref{defPW} that $P_NW$ is also entire.
	Since $P_N$ is a polynomial and $W$ is rational, it follows that $P_N W$ is a matrix-valued polynomial. This proves part (a).
	\medskip
	
	(b) is proved in the same way. 
	
	\medskip
	(c)  The factors in the Wiener--Hopf factorization \eqref{Wfactors} 
	are invertible matrices, and therefore $f$ given by
 \begin{align*}
 f := \begin{cases}
     \cfrac{\det \widehat{\varphi}}{\det \varphi}  &\text{ in }  \Omega_{ext},
     \\[2ex]
     \cfrac{\det \varphi}{\det \widehat{\varphi}} &\text{ in }  \Omega_{int},
 \end{cases}
 \end{align*}
	is well-defined and analytic in $\mathbb C \setminus \Gamma$. From the
	factorization \eqref{Wfactors} it follows that $f_+ = f_-$ on $\Gamma$,
	hence $f$ is entire. From the asymptotic behavior \eqref{phiasymp} it follows that $f(z) \to 1$
	as $z \to \infty$, and we conclude that $f \equiv 1$ by Liouville's theorem.
	Thus 
	\[ \det \varphi(z) = \det \widehat{\varphi}(z), \qquad z \in \mathbb C \setminus \Gamma. \]
	Then it is clear from \eqref{defP} and \eqref{defhatP} that $\det P_N(z) = \det \widehat{P}_N(z)$ 	for every $z \in \mathbb C$.
	From \eqref{defP} it also follows that $\det P_N$ has no zeros in $\Omega_{ext} \cup \Gamma$,
	while in $\Omega_{int}$ we have $\det P_N = \frac{\det \varphi}{\det W}$. 
	Since $\det \varphi$ has no zeros, $\det P_N$ has zeros at
	the poles of $\det W$ only. Since $\det P_N$ is a polynomial of degree $kN$, 
	$\det W$ will have $kN$ poles, with poles counted according to their multiplicities.
\end{proof}

\subsection{Proof of Proposition \ref{prop1}(d)} \label{proofprop1d}

For the proof of part (d) of Proposition \ref{prop1} we need a basic fact about
the solution of the RH problem from Section \ref{MVOPs_WH}. It is well-known, but we could not find a good reference for it.	

\begin{lemma} \label{uniqueMVOP}
	The RH problem \eqref{RHYjump}-\eqref{RHYasymp} has a solution if and only
	if the monic matrix-valued polynomial $P_N$ of degree $N$ satisfying \eqref{MVOP1}
	uniquely exists, and in that case 
	\begin{equation} \label{RHYPN} 
		P_{N} = \begin{pmatrix} I_k & 0_k \end{pmatrix}  Y \begin{pmatrix} I_k \\ 0_k  \end{pmatrix}. 
	\end{equation}
\end{lemma}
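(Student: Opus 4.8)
The plan is to prove the two implications separately, using the block structure of the RH problem \eqref{RHYjump}--\eqref{RHYasymp} throughout. For the direction ``RH solvable $\Rightarrow$ $P_N$ uniquely exists'', suppose $Y$ solves the RH problem. Write $Y$ in $k\times k$ blocks as $Y = \begin{pmatrix} Y_{11} & Y_{12} \\ Y_{21} & Y_{22} \end{pmatrix}$. Since the jump matrix in \eqref{RHYjump} is upper triangular with $1$'s on the diagonal, the first block column $\begin{pmatrix} Y_{11} \\ Y_{21} \end{pmatrix}$ has no jump across $\Gamma$ (the jump only mixes the second block column into the first, and the $(1,1)$, $(2,1)$ entries of the jump matrix are $I_k$, $0_k$). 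Hence $Y_{11}$ and $Y_{21}$ are entire. The asymptotic condition \eqref{RHYasymp} forces $Y_{11}(z) = z^N I_k + O(z^{N-1})$ and $Y_{21}(z) = O(z^{N-1})$, so by Liouville $Y_{11} =: P_N$ is a monic matrix polynomial of degree $N$ and $Y_{21} =: Q_{N-1}$ is a matrix polynomial of degree $\le N-1$. To extract the orthogonality, look at the second block column: from \eqref{RHYjump}, $Y_{12}$ and $Y_{22}$ satisfy $(Y_{12})_+ = (Y_{12})_- + (Y_{11})_- W$ on $\Gamma$, i.e.\ $Y_{12}$ has additive jump $P_N W$; combined with $Y_{12}(z) = O(z^{-N-1})$ at infinity, the Sokhotski--Plemelj formula gives $Y_{12}(z) = \frac{1}{2\pi i}\oint_\Gamma \frac{P_N(s)W(s)}{s-z}\,ds$. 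Expanding this Cauchy transform at $z=\infty$ as a series in $z^{-1}$, the decay $O(z^{-N-1})$ kills the first $N$ moments, which is exactly \eqref{MVOP1}. Uniqueness of $P_N$ then follows from uniqueness of the RH solution, which in turn is the standard $\det Y \equiv 1$ argument: $\det$ of the jump matrix is $1$, so $\det Y$ is entire and tends to $1$, hence $\det Y \equiv 1$; if $\widetilde Y$ were a second solution, $\widetilde Y Y^{-1}$ is entire, tends to $I_{2k}$, hence equals $I_{2k}$. (Concretely one can also just argue directly that two monic $P_N$'s satisfying \eqref{MVOP1} differ by a polynomial of degree $\le N-1$ whose moments against $W$ up to order $N-1$ vanish, then feed this into the linear-algebra uniqueness below.)

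For the converse, suppose $P_N$ monic of degree $N$ satisfying \eqref{MVOP1} uniquely exists. I will build $Y$ by the ansatz \eqref{RHYsol}: set $Y_{11} = P_N$, $Y_{12}(z) = \frac{1}{2\pi i}\oint_\Gamma \frac{P_N(s)W(s)}{s-z}\,ds$, and similarly $Y_{21} = Q_{N-1}$, $Y_{22}(z) = \frac{1}{2\pi i}\oint_\Gamma \frac{Q_{N-1}(s)W(s)}{s-z}\,ds$, where $Q_{N-1}$ is the matrix polynomial of degree $\le N-1$ satisfying \eqref{QNortho}. One must first check that such a $Q_{N-1}$ exists and is unique: writing the conditions \eqref{MVOP1} for $P_N$ and \eqref{QNortho} for $Q_{N-1}$ as linear systems in the coefficient matrices, both are governed by the \emph{same} block coefficient matrix (the block Hankel-type matrix of moments $\frac{1}{2\pi i}\oint_\Gamma W(z) z^{i+j}\,dz$), and the hypothesis that the monic $P_N$ \emph{uniquely} exists is precisely the statement that this block matrix is invertible — so $Q_{N-1}$ with the prescribed right-hand side \eqref{QNortho} exists and is unique as well. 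Granting this, the jump relation \eqref{RHYjump} for $Y$ is immediate from Sokhotski--Plemelj (the $+/-$ boundary values of the Cauchy transforms differ by the density), and the asymptotic condition \eqref{RHYasymp} is checked by expanding the Cauchy transforms at infinity: $Y_{12}(z) = -\sum_{m\ge 0} z^{-m-1}\frac{1}{2\pi i}\oint_\Gamma P_N(s)W(s) s^m\,ds$, whose first $N$ terms vanish by \eqref{MVOP1}, giving $Y_{12} = O(z^{-N-1})$; and $Y_{22}(z) = z^{-N} I_k + O(z^{-N-1})$ using the $j = N-1$ line of \eqref{QNortho} to produce the leading $-\frac{1}{2\pi i}\oint Q_{N-1} W z^{N-1} = I_k$ coefficient at order $z^{-N}$ (lower moments vanish). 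Together with $Y_{11} = z^N I_k + O(z^{N-1})$ and $Y_{21} = O(z^{N-1})$ this is exactly \eqref{RHYasymp}. Finally \eqref{RHYPN} holds by construction since $Y_{11} = P_N$.

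The main obstacle, and the only part requiring genuine care rather than routine complex analysis, is the linear-algebra bookkeeping in the converse: one has to argue cleanly that ``the monic $P_N$ uniquely exists'' is equivalent to invertibility of the relevant block moment matrix, and that this same invertibility delivers the auxiliary polynomial $Q_{N-1}$ satisfying the inhomogeneous conditions \eqref{QNortho}. Everything else — the triangular structure of the jump making the first block column entire, the Liouville/degree arguments, the Sokhotski--Plemelj identifications, and the $\det Y \equiv 1$ uniqueness — is standard Fokas--Its--Kitaev-style manipulation. I would also remark that the matrix (rather than scalar) setting causes no difficulty here: noncommutativity never enters, since the products appearing are always ``polynomial times $W$'' in a fixed order matching \eqref{MVOP1}.
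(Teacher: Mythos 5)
Your converse direction matches the paper's in substance: the paper packages the linear algebra as a dimension count (the space $\mathcal V$ of matrix polynomials of degree $\le N-1$ with vanishing moments up to order $N-2$ has dimension $\ge k^2$, the map $L\colon Q \mapsto \frac{1}{2\pi i}\oint_\Gamma Q(z)W(z)z^{N-1}\,dz$ is injective on $\mathcal V$ because a nonzero kernel element added to $P_N$ would violate uniqueness, hence $L$ is an isomorphism and $Q_{N-1}=L^{-1}(-I_k)$), whereas you phrase the same fact as invertibility of the block moment matrix; both deliver a $Q_{N-1}$ satisfying \eqref{QNortho}, after which your verification of the jump and the asymptotics for the ansatz \eqref{RHYsol} is the standard one. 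Your forward direction also begins as the paper's: any solution has entire first block column, $Y_{11}$ is monic of degree $N$, $Y_{12}$ is the Cauchy transform of $P_NW$, and the decay $O(z^{-N-1})$ yields \eqref{MVOP1}, so existence of $P_N$ and formula \eqref{RHYPN} are fine.

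The one genuine gap is your justification of uniqueness of $P_N$ in the forward direction. Uniqueness of the RH solution does not by itself imply uniqueness of $P_N$: a hypothetical second monic polynomial $\widetilde P_N$ satisfying \eqref{MVOP1} is not a priori the $(1,1)$ block of any solution of the RH problem, so no contradiction with the $\det Y\equiv 1$ uniqueness argument arises until you exhibit such a solution. The paper closes exactly this point: keeping the $Q_{N-1}$ already supplied by the given solution $Y$, it forms $\widetilde Y$ by the ansatz \eqref{RHYsol} with $P_N$ replaced by $\widetilde P_N$ (the orthogonality of $\widetilde P_N$ gives the required $O(z^{-N-1})$ decay of the $(1,2)$ block), obtaining a second, distinct solution and hence a contradiction. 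Your parenthetical alternative does not repair the gap: reducing to ``a degree $\le N-1$ polynomial with vanishing moments up to order $N-1$ must vanish'' and then invoking ``the linear-algebra uniqueness below'' is circular, since the invertibility of the moment matrix was derived there from the unique existence of $P_N$, which is precisely what is to be proved here. With the explicit construction of $\widetilde Y$ inserted, your argument coincides with the paper's.
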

\begin{proof}
	If the RH problem $Y$ has a solution then it is necessarily of the form \eqref{RHYsol}
	and $P_N$ as in \eqref{RHYPN} is a monic MVOP of degree $N$. If $\widetilde{P}_N$ would be another
	monic MVOP of degree $N$, then we could define $\widetilde{Y}$ as in \eqref{RHYsol}
	but with $P_N$ replaced by $\widetilde{P}_N$. It would lead to a different solution of the RH problem.
	This is a contradiction since the solution of the RH problem (if it exists) is unique.
	
	Conversely, suppose that $P_N$ uniquely exists.
	Let $\mathcal V$ be the vector space of matrix-valued polynomials $Q$ of degree $\leq N-1$
	such that
	\[ \frac{1}{2\pi i} \oint_{\Gamma} Q(z) W(z) z^j dz = 0_k, \qquad j=0, 1, \ldots, N-2. \] 
	Recall that the matrices have size $k \times k$, and so there are $k^2 \times (N-1)$
	homogeneous conditions for the $k^2 \times N$ degrees of freedom in $Q$. Thus
	$\dim \mathcal V \geq k^2$.
	Suppose $\widetilde Q \in \mathcal V$ is in the kernel of the linear map
	\[ L: \quad \mathcal V \to \mathbb C^{k \times k} : \quad Q \mapsto \frac{1}{2\pi i} \oint_{\Gamma} Q(z) W(z) z^{N-1} dz. \]
	Then $P_N + \widetilde Q$ is a monic matrix polynomial of degree $N$ satisfying the
	matrix-valued orthogonality \eqref{MVOP1}, and thus by uniqueness of $P_N$, it follows
	that $\widetilde Q = 0_k$.  Hence $L$ has a trivial kernel. Since $\dim \mathcal V \geq k^2 =  \dim \mathbb C^{k \times k}$, it then follows that the dimensions are equal and that $L$ is an isomorphism.
	In particular, $L$ is surjective, and there exists $Q_{N-1} \in \mathcal V$ with $L Q_{N-1} = -I_k$. Then $Q_{N-1}$ satisfies
	the conditions \eqref{QNortho}, and we can use $P_N$ and $Q_{N-1}$ to build $Y$ according
	to \eqref{RHYsol} and $Y$ will solve the RH problem.   
\end{proof}

\begin{proof}[Proof of Proposition \ref{prop1}(d)]
	The matrix-valued orthogonalities \eqref{MVOP1} and \eqref{MVOP3} 
	follow from parts (a) and (b) of Proposition \ref{prop1}. We have to show the uniqueness.
	
	By Lemma~\ref{uniqueMVOP}, there exists a unique  matrix-valued polynomial of degree $N$ satisfying \eqref{MVOP1}
	if and only if the RH problem \eqref{RHYjump}-\eqref{RHYasymp} has a solution.
	The solution has to be of the form \eqref{RHYsol}, and so we have to show that 
	a matrix-valued polynomial $Q_{N-1}$ of degree $\leq N-1$ exists such that 
	\eqref{QNortho} holds. 
	Having $P_N$ such that $P_NW$ is a matrix-valued polynomial, we look
	for $Q_{N-1}$ of the form 
		\begin{equation}\label{def:QwithH}
		Q_{N-1}(z) = \begin{cases}
			\left( \ds \frac{1}{2\pi i} \oint_{\Gamma}
			H_+(s)P_N^{-1}(s)  \frac{ds}{s-z} \right) P_N(z),
			& z \in \Omega_{ext}, \\
			\left( \ds \frac{1}{2\pi i} \oint_{\Gamma}
			H_+(s) P_N^{-1}(s)  \frac{ds}{s-z} \right) P_N(z)
			- H(z),
			& z \in \Omega_{int},
		\end{cases}
	\end{equation}
	where $H$ is an analytic matrix-valued function in $\Omega_{int}$
	with continuous boundary values $H_+$ on the boundary 
	$\partial \Omega_{int} = \Gamma$. 
	We know from part (c) that $\det P_N(s) \neq 0$ for $s \in \Gamma$,
	and so \eqref{def:QwithH} is well-defined and analytic in $\mathbb C \setminus \Gamma$.
	The Sokhotski--Plemelj formula tells us that $Q_{N-1,+} = Q_{N-1,-}$ on $\Gamma$. Hence $Q_{N-1}$
	is entire. From \eqref{QNformula3} we see that $Q_{N-1}(z) = O(z^{N-1})$ 
	as $z \to \infty$, and we conclude that $Q_{N-1}$ is a matrix-valued polynomial of degree $\leq N-1$. 
	
	We want to determine $H$ such that \eqref{QNortho} is satisfied.
	We observe from \eqref{def:QwithH} that $Q_{N-1}(z) W(z) z^j$ for $z \in \Omega_{int}$
	has two terms. Since $P_NW$ is entire, the first term
	\[ \left( \ds \frac{1}{2\pi i} \oint_{\Gamma}
	H_+(s) P_N^{-1}(s)  \frac{ds}{s-z} \right) P_N(z) W(z) z^j \]
	is analytic in $\Omega_{int}$ for every $j=0,1,2, \ldots$, and it is continuous
	up to the boundary. By Cauchy's theorem
	its integral over $\Gamma$ vanishes. Hence for $j=0,1,2,\ldots$,
	\begin{equation} \label{QNorthogonality1} 
		\frac{1}{2\pi i} \oint_{\Gamma} Q_{N-1}(z) W(z) z^j dz = 
	-\frac{1}{2\pi i} \oint_{\Gamma} H_+(z) W(z) z^j dz. \end{equation}
	Now we can verify that the required $H$ is provided by the Wiener--Hopf factorization
	$W = \widehat{\varphi}_+ \widehat{\varphi}_-$ on $\Gamma$.
	Indeed, putting 
	\begin{equation} \label{defH} 
		H(z) =  \widehat{\varphi}^{-1}(z),  \quad z \in \Omega_{int}, \end{equation}
	we have $H_+ W = \widehat{\varphi}_-$ on $\Gamma$, and 
	\begin{equation} \label{QNorthogonality2} \frac{1}{2\pi i} \oint_{\Gamma} H_+(z) W(z) z^j dz 
	= \frac{1}{2\pi i} \oint_{\Gamma} \widehat{\varphi}_-(z) z^j dz. 
	\end{equation}
	Since $\widehat{\varphi}$ is analytic in $\Omega_{ext}$,
	the only contribution to the integral on the right-hand side
	comes from the residue at infinity.
	Since $\widehat{\varphi}(z) = z^{-N} I_k + O\left(z^{-N-1}\right)$ as $z \to \infty$, we find
	\begin{equation} \label{QNorthogonality3}	
		\frac{1}{2\pi i} \oint_{\Gamma} \widehat{\varphi}_-(z) z^j dz
		= \begin{cases} 0_k, &  j=0,1, \ldots, N-2, \\
			I_k, & j=N-1.
			\end{cases} \end{equation}
	Combining \eqref{QNorthogonality1}, \eqref{QNorthogonality2}, \eqref{QNorthogonality3}
	we obtain the identities \eqref{QNortho}. Thus $Q_{N-1}$ exists
	and is given by \eqref{def:QwithH} with $H$ defined by \eqref{defH}.
	Writing $P_N$ in terms of $\varphi$ as in \eqref{defP} we also obtain
	\eqref{QNformula2}.
\end{proof}

\subsection{Proof of Proposition \ref{prop2}} \label{proofprop2}

\begin{proof}
	(a) Since $P_N$ uniquely exists, the RH problem \eqref{RHYjump} and \eqref{RHYasymp}
	has a solution $Y$, see Lemma \ref{uniqueMVOP}, which is necessarily unique and it is given by \eqref{RHYsol}.
	It is well-known that $\det Y(z) = 1$ for $z \in \mathbb C \setminus \Gamma$,
	see e.g.\ \cite{BFM21, GIM11},
	and hence the inverse $Y^{-1}(z)$ exists for $z \in \mathbb C \setminus \Gamma$.
	Consider 
	\begin{equation} \label{Xdef} 
		X(z) = \begin{pmatrix} 0_k & -I_k \\ I_k & 0_k \end{pmatrix} Y^{-t}(z) 
		\begin{pmatrix} 0_k & I_k \\ -I_k & 0_k \end{pmatrix}, \qquad z \in \mathbb C \setminus \Gamma, \end{equation}
	where $Y^{-t} = (Y^{-1})^t = (Y^t)^{-1}$ is the inverse transpose of $Y$.
	Then it is immediate from \eqref{RHYjump} and \eqref{RHYasymp}
	that
	\begin{equation} \label{RHXjump}
		X_+ = X_- \begin{pmatrix} I_k & W^t \\ 0_k & I_k \end{pmatrix} \quad
		\text{ on } \Gamma, \end{equation}
	and
	\begin{equation} 
		X(z) = \left( I_{2k} + O(z^{-1})\right) \begin{pmatrix} z^N I_k & 0_k \\
			0_k & z^{-N} I_k \end{pmatrix} \text{ as } z \to \infty. 
	\end{equation} 
	This is the RH problem for matrix-valued orthogonal polynomials
	with respect to the weight matrix $W^t$. The left upper block of $X$
	is thus a monic matrix-valued polynomial $P$ of degree $N$ with orthogonality with respect to $W^t$, i.e.,
	\[ \frac{1}{2\pi i} \oint_{\Gamma} P(z) W^t(z) z^j dz = 0_k, \qquad j=0,1, \ldots, N-1. \]	
	Taking the transpose of $P$ and comparing with \eqref{MVOP3} we find that $\widehat{P}_N$ 	exists and
	\begin{align*} \widehat{P}_N & = P^t = 
		\left[ \begin{pmatrix} I_k & 0_k \end{pmatrix} X \begin{pmatrix} I_k \\ 0_k \end{pmatrix} \right]^t. \end{align*}
	Then using \eqref{Xdef} we find  \eqref{RHYhatP}. 
	
	The relation \eqref{Xdef} between the RH problem $Y$ for $P_N$ and
	the RH problem $X$ for $\widehat{P}_N$ was already found by 
	Branquinho et al. \cite[Theorem 3]{BFM21}. 
	
\medskip
(b)  
Suppose $P_NW$ is a matrix-valued polynomial. Then for $z \in \Omega_{ext}$ we have
$\frac{1}{2\pi i} \oint_{\Gamma} \frac{P_N(s) W(s)}{s-z} ds = 0_k$ by Cauchy's theorem.
Hence by \eqref{RHYsol}
\begin{equation} \label{RHYsol2} 
	Y(z) = \begin{pmatrix} P_N(z) & 0_k \\ Q_{N-1}(z) &  \ast  \end{pmatrix} \quad z \in \Omega_{ext}, \end{equation}
	where $\ast$ denotes an entry  whose precise form is not important at the moment.
Since $Y(z)$ is invertible for every $z \in \mathbb C \setminus \Gamma$ it follows
from \eqref{RHYsol2} that $P_N(z)$ is invertible for $z \in \Omega_{ext}$.
By part (a) we have
\[ Y^{-1}(z) = \begin{pmatrix} \ast & \ast \\ \ast & \widehat{P}_N(z) \end{pmatrix},
	\quad z \in \mathbb C \setminus \Gamma. \]
Comparing with \eqref{RHYsol2} we conclude that 
\begin{equation} \label{RHYsol3} 
	Y(z) = \begin{pmatrix} P_N(z) & 0_k \\ Q_{N-1}(z) & \widehat{P}_N^{-1}(z) \end{pmatrix}, \quad z \in \Omega_{ext}. \end{equation}
and in particular $\widehat{P}_N(z)$ is invertible for $z \in \Omega_{ext}$.
This proves part (b).

\medskip

(c) Because of part (b), the matrix-valued functions $\varphi$ and $\widehat{\varphi}$  are
well-defined by the formulas \eqref{defphi} and \eqref{deftilphi}, and they are both
analytic in $\mathbb C \setminus \Gamma$.
By construction the factorizations \eqref{Wfactors} hold, and also the
asymptotic behavior \eqref{phiasymp} is immediate since both $P_N$ and $\widehat{P}_N$
are monic polynomials of degree $N$. Thus part (c) follows.
\end{proof}

Comparing \eqref{RHYsol} and \eqref{RHYsol3} we see that, in the situation of Proposition~\ref{prop2}
\begin{equation} \widehat{\varphi}(z) = \widehat{P}_N^{-1}(z) =
	\frac{1}{2\pi i} \oint_{\Gamma} \frac{Q_{N-1}(s) W(s)}{s-z} ds,
		\qquad z \in \Omega_{ext},
\end{equation}
which is a somewhat remarkable identity.

\subsection{Proof of Proposition \ref{prop3}} \label{proofprop3}

\begin{proof}[Proof of Proposition \ref{prop3}] (a) From \eqref{RHYsol3} and the fact that $\det Y \equiv 1$ it follows that
\[ \det P_N(z) = \det \widehat{P}_N(z) \]
for every $z \in \Omega_{ext}$, and hence for every $z \in \mathbb C$, since these
are polynomials. Since $\det\left(P_NW\right)$ has no zeros in $\Gamma \cup \Omega_{int}$,
the same is true for $\det\left(W \widehat{P}_N\right)$.

\medskip

(b) It is clear from the definitions
\eqref{defphi} and \eqref{deftilphi} that $\varphi(z)$ and  $\widehat{\varphi}(z)$ are
invertible for $z \in \Omega_{ext}$. Also the limiting values $\varphi_{\pm}$ and $\widehat{\varphi}_{\pm}$ 
are invertible on $\Gamma$, because of the factorization \eqref{Wfactors} 
(which was proved in Proposition \ref{prop2}(c)),
and the fact that it is assumed now that  $\det \left(P_N W\right) \neq 0$ on $\Gamma$, hence $\det W \neq 0$
on $\Gamma$. 
Since $\det \left(P_NW(z)\right)  \neq 0$ for $z \in \Omega_{int}$,
it follows from \eqref{deftilphi} that $\varphi(z)$ is also invertible for $z \in \Omega_{int}$.
Similarly, $\widehat{\varphi}(z)$ is invertible for $z \in \Omega_{int}$ by \eqref{defphi} and 
$\det\left(W \widehat{P}_N(z)\right)
\neq 0$ for $z \in \Omega_{int}$, see part (a).

\medskip
(c) From \eqref{defphi} we obtain $W^{-1} = \widehat{P}_N \widehat{\varphi}^{-1}$
in $\Omega_{int}$. Hence there are no poles of $W^{-1}$ in $\Omega_{int}$. This implies
that the conditions of Proposition \ref{prop1} are satisfied. We proved
formula \eqref{QNformula3} in the proof of Proposition \ref{prop1}(d)
and so it is also valid in the present situation. 

\medskip

(d) Inverting \eqref{RHYsol3} we have
	\[ Y^{-1}(w) = \begin{pmatrix} P_N^{-1}(w) & 0_k \\ - \widehat{P}_N(w) Q_{N-1}(w) P_N^{-1} (w) & \widehat{P}_N(w)  \end{pmatrix},
	\quad w \in \Omega_{ext}. \] 
	Hence for $w,z \in \Omega_{ext}$,
	\begin{align*} (z-w) R_N(w,z) 
		& = \begin{pmatrix}  - \widehat{P}_N(w) Q_{N-1}(w) P_N^{-1} (w) & \widehat{P}_N(w) \end{pmatrix}
		\begin{pmatrix} P_N(z) \\ Q_{N-1}(z) \end{pmatrix} \\
		& =  \widehat{P}_N(w) Q_{N-1}(z)  - \widehat{P}_N(w) Q_{N-1}(w) P_N^{-1} (w) P_N(z). 
	\end{align*}
	Inserting the integral representation \eqref{QNformula3} we obtain from this
	\begin{align*} 
 (z-w)&R_N(w,z) \\
		&=
		\widehat{P}_N(w) \left[ \frac{1}{2\pi i} \oint_{\Gamma} \widehat{P}_N^{-1}(s) W^{-1}(s) P_N^{-1}(s)
		\frac{ds}{s-z} \right.
  \\&\hspace{3cm} \left. - \frac{1}{2\pi i} \oint_{\Gamma} \widehat{P}_N^{-1}(s) W^{-1}(s) P_N^{-1}(s)
		\frac{ds}{s-w}   \right] P_N(z) \\
		&= 	\widehat{P}_N(w) \left[ \frac{1}{2\pi i} \oint_{\Gamma} \widehat{P}_N^{-1}(s) W^{-1}(s) P_N^{-1}(s)
		\left(\frac{1}{s-z} - \frac{1}{s-w} \right) ds \right] P_N(z) \\
		&= 	\widehat{P}_N(w) \left[ \frac{1}{2\pi i} \oint_{\Gamma} \widehat{P}_N^{-1}(s) W^{-1}(s) P_N^{-1}(s)
		\frac{z-w}{(s-z)(s-w)}   ds \right] P_N(z).
	\end{align*}
\end{proof}

\section{Proof of Theorem \ref{theo21}} \label{Sect2x2Const}
The formulas \eqref{PNformula} and \eqref{PNhatformula} can be found from an analysis of the RH problem similar to what was done in \cite{DK21}. The  RH problem is exactly solvable in terms of Jacobi theta functions and does not require the typical Deift--Zhou nonlinear steepest descent analysis. In fact, the main idea of the RH analysis, the diagonalization of the weight matrix $W$, cf.~\cite{DK21, GK21}, suffices to give a direct more compact proof of Theorem \ref{theo21} circumventing the RH analysis entirely. This is the approach we will take in the following.

\subsection{Preliminaries}
\label{Sect_Prelimianries}

\subsubsection{Riemann surface $\mathcal R$}\label{Sect:RiemannSurface}
The property \eqref{Harnack1} introduces a natural sheet structure on 
the compact Riemann surface $\mathcal R$ associated with the algebraic equation \eqref{EigEq}.
The two sheets are copies of $\mathbb C \setminus ([x_3,x_2] \cup [x_1, x_0])$
that are glued together along the cuts $[x_3,x_2]$ and $[x_1,x_0]$ in the
usual crosswise manner. 
A point $(z,\lambda_1(z))$ is on the first sheet, while $(z,\lambda_2(z))$ is
on the second sheet. We also use the notation $z^{(j)}$ to denote the point
on the $j$th sheet of the Riemann surface that projects to $z$.

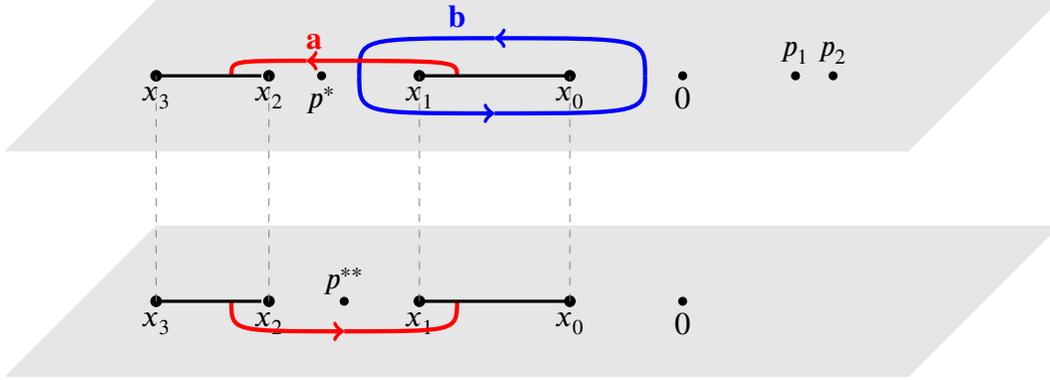
\begin{figure}[t]
	\begin{center}
		\begin{tikzpicture}(15,10)(0,0)
			\filldraw[gray!20!white] (5,0) --++(12,0) --++(2,2) --++(-12,0) --++(-2,-2);
			
			\filldraw[gray!20!white] (5,3) --++(12,0) --++(2,2) --++(-12,0) --++(-2,-2);
			
			\filldraw (7,1)  circle(2pt);
			\filldraw[black] (9.5,1) circle(1.5pt);	
			\filldraw[black] (9.2,4) circle(1.5pt);	
			\draw[black]   (9.5,1.6) node[below] {$p^{**}$};
			\draw[black]   (9.2,4) node[below] {$p^*$}; 
			\filldraw (8.5,1)  circle (2pt);	 
			\filldraw (10.5,1)  circle (2pt);	 
			\filldraw (12.5,1)  circle (2pt);	 
			\draw   (8.5,1) node[below] {$x_2$};
			\draw   (10.5,1) node[below] {$x_1$};
			\draw   (12.5,1) node[below] {$x_0$};
			\draw   (7,1) node[below] {$x_3$};
			
			\filldraw[black] (15.5,4) circle(1.5pt);	
			\filldraw[black] (16,4) circle(1.5pt);	
			\draw[black]   (15.5,4.6) node[below] {$p_1$};
			\draw[black]   (16,4.6) node[below] {$p_2$}; 
		
			\draw[blue, ultra thick] (9.7,4) .. controls (9.7,4.5).. (11.5,4.5);
			\draw[blue, ultra thick,->] (13.5,4) .. controls (13.5,4.5).. (11.5,4.5);
			\draw[blue, ultra thick,->] (9.7,4) .. controls (9.7,3.5).. (11.5,3.5);
			\draw[blue, ultra thick] (13.5,4) .. controls (13.5,3.5).. (11.5,3.5);
		
			\draw[red, ultra thick] (8,4) .. controls (8,4.2) .. (9,4.2);
			\draw[red, ultra thick,->] (11,4) .. controls (11,4.2) .. (9,4.2);
			\draw[red, ultra thick,->] (8,1) .. controls (8,0.6) .. (9.5,0.6);
			\draw[red, ultra thick] (11,1) .. controls (11,0.6) .. (9.5,0.6);
			\draw[red]   (9.1,4.2) node[above] {$\textbf{a}$};
			\draw[blue]   (11,4.5) node[above] {$\textbf{b}$};

			\draw[very thick,black] (7,1)--++(1.4,0); 
			\draw[very thick,black] (10.5,1)--++(2,0); 
			\filldraw (7,4)  circle (2pt);
			\filldraw (8.5,4)  circle (2pt);	 
			\filldraw (10.5,4)  circle (2pt);	 
			\filldraw (12.5,4)  circle (2pt);

			\filldraw (14,4)  circle (1.5pt);
			\filldraw (14,1)  circle (1.5pt);
			\draw   (14,4) node[below] {$0$};
			\draw   (14,1) node[below] {$0$};
			
			\draw   (8.5,4) node[below] {$x_2$};
			\draw   (10.5,4) node[below] {$x_1$};
			\draw   (12.5,4) node[below] {$x_0$};
			\draw   (7,4) node[below] {$x_3$};
			
			\draw[very thick, black] (7,4)--++(1.4,0); 
			\draw[very thick,black] (10.5,4)--++(2,0); 
			
			\draw[dashed,help lines] (7,1)--(7,4);
			\draw[dashed,help lines] (8.5,1)--(8.5,4);	
			\draw[dashed,help lines] (10.5,1)--(10.5,4);
			\draw[dashed,help lines] (12.5,1)--(12.5,4);
		\end{tikzpicture}
	\end{center}
	\caption{The two-sheeted Riemann surface $\mathcal R$ with branch cuts along $[x_3,x_2]$ and $[x_1,x_0]$,
		and with $\textbf{a}$ and $\textbf{b}$ cycles. 
		The Riemann surface has special points $p^*$ and $p^{**}$ on the bounded oval, 
		and $p_1$ and $p_2$ on the unbounded oval. 
		The points $p^*$ and $p^{**}$ are given by \eqref{pstar} and they could be anywhere on the bounded oval. 
		The points $p_1$ and $p_2$ given by \eqref{polelambda} are the poles of $\lambda$ and they lie
		on the positive real axis of the first sheet. The zeros of $\lambda$ (not shown in the figure) are on the 
		positive real axis of the second sheet.\label{RSurface}}
	\end{figure}

The real part of $\mathcal R$ has two connected components. There
is an unbounded component containing all points where either $z$ or $\lambda$
is equal to $0$ or $\infty$. This is called the unbounded oval.
The other component is bounded. We call it the bounded oval, and it
consists of points $(z,\lambda)$ on the Riemann surface with $z \in [x_2,x_1]$.
As we have assumed that $x_2 < x_1$, the Riemann surface has genus one.

Being of genus one, the Riemann surface has a unique holomorphic differential $\eta$ 
with $\oint_{\textbf{a}} \eta = 1$, where $\textbf{a}$ is homotopic to the bounded oval viewed
as a cycle on the Riemann surface with orientation from $x_1$ to $x_2$ on the first sheet,
and from $x_2$ to $x_1$ on the second sheet, see also Figure \ref{RSurface}.
Explicitly we have (in case $x_{3} > -\infty$)
\begin{equation} 
	\eta =  \frac{C^{-1} dz}{\left[(z-x_{3})(z-x_2)(z-x_1)(z-x_0)\right]^{1/2}} \end{equation}
with positive constant 
$ C = 2 \int_{x_2}^{x_1} \frac{dx}{\sqrt{(x-x_3)(x-x_2)(x-x_1)(x-x_0)}}$,
where we take the positive square root for every $x \in [x_2,x_1]$. We also define the cycle $\bf{b}$ to go around the interval $[x_1, x_0]$ on the first sheet, see Figure~\ref{RSurface}.

We define
\begin{equation} \label{deftau} 
	\tau = \oint_{\bf{b}} \eta \in i \mathbb R^+. \end{equation}
Then $\mathcal R$ is conformally equivalent to the complex torus $\mathbb C \slash (\mathbb Z + \tau \mathbb Z)$
under the Abel map. We choose the Abel map with base point at $p_0 = (x_0, \lambda(x_0))$
(note $\lambda_1(x_0) = \lambda_2(x_0)$),
\begin{equation} \label{Abelmap} 
	\mathcal A :  p \in \mathcal R \mapsto \int_{p_{0}}^p \eta \in \mathbb C \slash
	(\mathbb Z + \tau \mathbb Z).
\end{equation}
We restrict the Abel map to $\mathcal R' := \{ p \in \mathcal R \mid z(p) \not\in [x_3, x_1] \}$ and we choose a path of integration in \eqref{Abelmap} from $p_0$ to $p$ that lies
within $\mathcal R'$. Then $\mathcal A$ maps $\mathcal R'$ to the open rectangle
\begin{equation} \label{rectangle} 
	\{ u = x + y \tau  \mathbb  \mid -\tfrac{1}{2} < x < \tfrac{1}{2}, \, - \tfrac{1}{2} < y < \tfrac{1}{2} \},
\end{equation} 
see Figure~\ref{figjv}.

%%%%%%%%%%%%%%FIGURE of image under Abel map
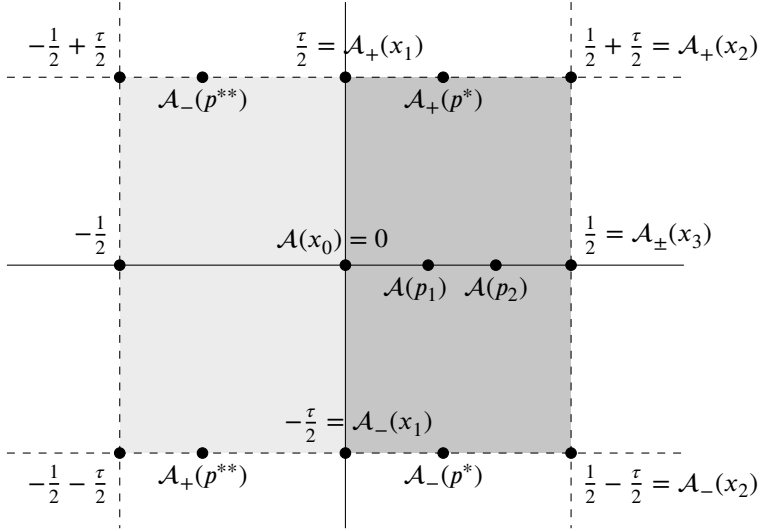
\begin{figure}[t]
\centering
\begin{tikzpicture}
\path [fill=gray!45] plot (0,-2.5) -- (3,-2.5) -- (3,2.5) -- (0,2.5) --cycle;
\path [fill=gray!15] plot (-3,-2.5) -- (0,-2.5) -- (0,2.5) -- (-3,2.5) --cycle;

\draw (-4.5,0) -- (4.5,0);
\draw (0,-3.5) -- (0,3.5);

\draw[dashed] (-4.5,2.5) -- (4.5,2.5);
\draw[dashed] (-4.5,-2.5) -- (4.5,-2.5);
\draw[dashed] (-3,3.5) -- (-3,-3.5);
\draw[dashed] (3,3.5) -- (3,-3.5);

\draw[fill] (0,0) circle (0.07) node[above] {\footnotesize{$\mathcal A(x_0)=0 \quad$}};
%\draw[fill] (-2,0) circle (0.07) node[above left] {$\scriptstyle \mathcal A(x_3)=-\frac{1}{2}$};
%\draw[fill] (2,0) circle (0.07) node[above right] {$\scriptstyle \mathcal A(x_3) = \frac{1}{2}$};
\draw[fill] (1.1,0) circle (0.07) node[below] {\footnotesize{$\mathcal A(p_1) \quad$}};
\draw[fill] (1.3,2.5) circle (0.07) node[below] {\footnotesize{$\mathcal A_+(p^{*})$}};
\draw[fill] (-1.9,2.5) circle (0.07) node[below] {\footnotesize{$\mathcal A_-(p^{**})$}};
\draw[fill] (1.3,-2.5) circle (0.07) node[below] {\footnotesize{$\mathcal A_-(p^{*})$}};
\draw[fill] (-1.9,-2.5) circle (0.07) node[below] {\footnotesize{$\mathcal A_+(p^{**})$}};
\draw[fill] (2,0) circle (0.07) node[below] {\footnotesize{$\mathcal A(p_2)$}};
%\draw[fill] (0,1.5) circle (0.07) node[above] {$\scriptstyle \mathcal A(x_1) = \frac{\tau}{2}$};
%\draw[fill] (0,-1.5) circle (0.07) node[above] {$\scriptstyle \mathcal A(x_1) = - \frac{\tau}{2}$};
%\draw[fill] (-2,1.5) circle (0.07) node[above left] {$\scriptstyle \mathcal A(x_2) = -\frac{1}{2} + \frac{\tau}{2}$};
%\draw[fill] (2,1.5) circle (0.07) node[above right] {$\scriptstyle \mathcal A(x_2) = \frac{1}{2} + \frac{\tau}{2}$};
%\draw[fill] (-2,-1.5) circle (0.07) node[below left] {$\scriptstyle \mathcal A(x_2) = - \frac{1}{2} -\frac{\tau}{2}$};
%\draw[fill] (2,-1.5) circle (0.07) node[below right] {$\scriptstyle \mathcal A(x_2) = \frac{1}{2} + \frac{\tau}{2}$};

\draw[fill] (-3,0) circle (0.07) node[above left] {\footnotesize{$-\frac{1}{2}$}};
\draw[fill] (3,0) circle (0.07) node[above right] {\footnotesize{$\frac{1}{2} = \mathcal A_{\pm}(x_3)$}};
\draw[fill] (0,2.5) circle (0.07) node[above] {\footnotesize{$\quad \frac{\tau}{2} = \mathcal A_+(x_1)$}};
\draw[fill] (0,-2.5) circle (0.07) node[above] {\footnotesize{$\quad - \frac{\tau}{2} = \mathcal A_-(x_1) $}};
\draw[fill] (-3,2.5) circle (0.07) node[above left] {\footnotesize{$-\frac{1}{2} + \frac{\tau}{2}$}};
\draw[fill] (3,2.5) circle (0.07) node[above right] {\footnotesize{$\frac{1}{2} + \frac{\tau}{2} = \mathcal A_+(x_2)$}};
\draw[fill] (-3,-2.5) circle (0.07) node[below left] {\footnotesize{$- \frac{1}{2} -\frac{\tau}{2}$}};
\draw[fill] (3,-2.5) circle (0.07) node[below right] {\footnotesize{$\frac{1}{2} - \frac{\tau}{2} = \mathcal A_-(x_2)$}};

\end{tikzpicture}
\caption{Image of $\mathcal R' = \{ p \in \mathcal R \mid z(p) \not\in [x_3,x_1]\}$ 
	under the Abel map \eqref{Abelmap}. The region in dark (light) gray denotes the image 
	of the upper (lower) sheet.	As in Figure \ref{RSurface}, the point $p^*$ ($p^{**}$) is 
	assumed to be on the first (second) sheet of the bounded oval. 
	In the figure, the branch points $x_1, x_2, x_3$ are considered to be on the first sheet. So for example $\mathcal A_+(x_2)$
	denotes the limit of $\mathcal A(q)$ as $q$ tends to $x_2$ with $q$ in the upper half plane of the first sheet. \label{figjv}
	}
\end{figure}

The unbounded oval is mapped to the real interval $(-\frac{1}{2}, \frac{1}{2})$ 
and for $p$ in the bounded oval we
use $\mathcal A_+(p)$ to denote the limit of $\mathcal A(q)$ as $\mathcal R' \ni q \to p$ from the
upper half plane, i.e., the limit is taken from the upper half plane on the $j$th sheet if $p$ is on the $j$th sheet,
for $j=1,2$. Similarly $\mathcal A_-(p)$ is the limit from the lower half plane.
Then $\Im \mathcal A_\pm(p)  = \pm \frac{\Im \tau}{2}$ if the point $p$ on the bounded oval is on the first sheet
and $\Im \mathcal A_\pm(p)  = \mp \frac{\Im \tau}{2}$ if $p$ is on the second  sheet. Consequently, for a point $p$ on the bounded oval we have
\begin{equation} \label{Abeljump}
	\mathcal A_+(p) = \mathcal A_-(p) + \begin{cases}
		\tau & \text{ if } p  \text{ is on the first sheet,} \\
		- \tau & \text{ if } p  \text{ is on the second sheet}.
\end{cases} \end{equation}

The Jacobi theta function (we follow the notation of Akhiezer \cite{Akhiezer}),
\begin{equation} \label{Jacobi1} 
	\vartheta(u; \tau) = \sum_{n=-\infty}^{\infty} \exp \left( \pi i n^2 \tau + 2\pi i nu \right), \quad u \in \mathbb C \end{equation}
is an entire function with simple zeros at  
\begin{equation} \label{defK} K  + \left(\mathbb Z + \tau \mathbb Z \right), \qquad	
	K = \frac{1+\tau}{2}, \end{equation}
and no other zeros. Moreover,  the quasi-periodicity properties
\begin{equation} \label{Jacobi2} 
	\vartheta(u+1;\tau) = \vartheta(u;\tau), \qquad 
	\vartheta(u+\tau;\tau) = \exp \left(-\pi i (\tau + 2u) \right) \vartheta(u;\tau) \end{equation}
hold as can be verified by a direct computation. With $\tau$ being understood, we simply write $\vartheta(u)$.	

\subsubsection{Matrix-valued function $E$}

The matrix-valued function $E$ contains the eigenvectors of $\Phi$. Recall that $\Phi$ is given by \eqref{defPhi}.

\begin{definition} \label{def22} We define
	\begin{multline} \label{defEz} 
		E(z) = (z-\beta_{11}\beta_{12})(z-\beta_{21}\beta_{22})
		\begin{pmatrix} \Phi_{12}(z) & \Phi_{12}(z) \\
			\lambda_1(z) - \Phi_{11}(z) & \lambda_{2}(z)-\Phi_{11}(z) \end{pmatrix}, \\
			z \in \mathbb C \setminus ([x_3,x_2] \cup [x_1,x_0]).
	\end{multline}
\end{definition}
Then $E$ is defined and analytic in $\mathbb C \setminus ([x_3,x_2] \cup [x_1,x_0])$.
Indeed, because of the factor $(z-\beta_{11}\beta_{21})(z-\beta_{12}\beta_{22})$
the poles at $\beta_{11} \beta_{21}$ and $\beta_{12} \beta_{22}$ disappear.

We introduce two special real values $x^*$, $x^{**} < 0$ as follows 
\begin{align} \label{xstar} 
	x^* & = - \frac{\alpha_{12} \beta_{11} \beta_{22} \gamma_{21} + \alpha_{21}\beta_{12} \beta_{22} \gamma_{11} + \alpha_{12} \alpha_{21} \beta_{22} + \alpha_{12} \alpha_{22} \beta_{11}}{\beta_{12} \gamma_{11} \gamma_{22} + \beta_{22} \gamma_{11} \gamma_{21} + \alpha_{12} \gamma_{22} + \alpha_{22} \gamma_{11}}, \\
	\label{xstar2}
	x^{**} & = - \frac{\alpha_{11} \beta_{12} \beta_{21} \gamma_{22} + \alpha_{22}\beta_{11} \beta_{21} \gamma_{12} + \alpha_{11} \alpha_{21} \beta_{12} + \alpha_{11} \alpha_{22} \beta_{21}}{\beta_{11} \gamma_{12} \gamma_{21} + \beta_{21} \gamma_{12} \gamma_{22} + \alpha_{11} \gamma_{21} + \alpha_{21} \gamma_{12}}.  
\end{align}
At these points one of the off-diagonal entries of $\Phi$ vanishes, see Lemma \ref{lemma51}(e) below.
Associated to \eqref{xstar} and \eqref{xstar2} there are two special points $p^*$ and $p^{**}$ on the bounded real oval of the 
Riemann surface  given by,
\begin{equation} \label{pstar} 
	p^* = (x^*, \Phi_{22}(x^*)), \quad p^{**} = (x^{**}, \Phi_{11}(x^{**})),
\end{equation} 
see Figure~\ref{RSurface}.
We show in Lemma \ref{lemmaP} below that $p^*$ and $p^{**}$ are indeed on the bounded oval, 
and thus in particular 
\begin{equation} \label{xstarineq}
	 x_2 \leq x^* \leq x_1, \quad x_2 \leq x^{**} \leq x_1.
	 \end{equation}
\begin{lemma} \label{lemma51} The following hold,
	\begin{enumerate}
		\item[\rm (a)] $\Psi(z) := (z-\beta_{11} \beta_{12}) (z-\beta_{21} \beta_{22})  \Phi(z)$
		is a matrix-valued polynomial,
		\item[\rm (b)] $\Psi_{12}$ has degree one, and $x^*$ is a zero of $\Psi_{12}$, 
		\item[\rm (c)] $\Psi_{21}$ has degree two, with $0$ and $x^{**}$ being zeros of $\Psi_{21}$,
		\item[\rm (d)] $\Psi_{11}$ and $\Psi_{22}$ have degree two,
		\item[\rm (e)] $\Phi_{12}(x^*) = \Phi_{21}(x^{**}) = 0$.
	\end{enumerate}
\end{lemma}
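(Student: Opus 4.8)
The proof of Lemma~\ref{lemma51} is a direct computation with the explicit $2\times 2$ matrices \eqref{phib1}--\eqref{phig2}; the only structural input is that all weights $\alpha_{ij},\beta_{ij},\gamma_{ij}$ are strictly positive, which is exactly what keeps the relevant leading coefficients from vanishing.

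For part (a) I would group $\Phi=(\phi_1\phi_2)(\phi_3\phi_4)$ and carry out the two $2\times 2$ products. One finds that $M_1(z):=(z-\beta_{11}\beta_{21})\,\phi_1(z)\phi_2(z)$ and $M_2(z):=(z-\beta_{12}\beta_{22})\,\phi_3(z)\phi_4(z)$ are polynomial matrices with entries of degree $\leq 1$: the linear factors cancel the simple poles coming from the $\phi^g$--factors, while the apparent pole at $z=0$ created by the $z^{-1}$ entry of each $\phi^b$ disappears because the bottom row of $\phi^g$ vanishes at $z=0$ (equivalently $\phi^b\phi^g$ is analytic at the origin). Since the monic quadratic vanishing at the two poles of $\Phi$ is precisely $(z-\beta_{11}\beta_{21})(z-\beta_{12}\beta_{22})$, this shows $\Psi=M_1M_2$ is a polynomial matrix of degree $\leq 2$, proving (a).

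For (b)--(d) I would simply read off the four entries of $M_1M_2$. The diagonal entries are each of the form (linear)$\cdot$(linear)$+$(constant)$\cdot$(linear), with $z^2$--coefficients $\gamma_{11}\gamma_{12}>0$ and $\gamma_{21}\gamma_{22}>0$, hence of degree exactly two --- this is (d). The entry $\Psi_{12}$ is a sum of (linear)$\cdot$(constant) terms, hence of degree $\leq 1$, with $z$--coefficient a positive combination of weights, hence nonzero, so it has degree exactly one. The entry $\Psi_{21}$ has degree $\leq 2$ with a positive $z^2$--coefficient; moreover the $(2,1)$--entries of both $M_1$ and $M_2$ carry a factor $z$, so $z=0$ is a zero of $\Psi_{21}$, and writing $\Psi_{21}(z)=z\,L(z)$ with $L$ linear and of positive leading coefficient, its second zero $-L(0)/L'(0)$ is strictly negative. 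Since $\Psi_{12}$ is linear, its unique zero $-\Psi_{12}(0)/\Psi_{12}'(0)$ is explicit, and one checks it coincides with $x^*$ in \eqref{xstar}; likewise the second zero of $\Psi_{21}$ coincides with $x^{**}$ in \eqref{xstar2}. This gives (b) and (c).

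Finally, (e) is immediate: from $\Psi_{12}=(z-\beta_{11}\beta_{21})(z-\beta_{12}\beta_{22})\,\Phi_{12}$ and $\Psi_{21}=(z-\beta_{11}\beta_{21})(z-\beta_{12}\beta_{22})\,\Phi_{21}$, together with the fact that the quadratic prefactor does not vanish at $x^*$ or $x^{**}$ (its roots are positive while $x^*,x^{**}<0$), dividing the identities of (b) and (c) by that prefactor gives $\Phi_{12}(x^*)=\Phi_{21}(x^{**})=0$. I do not expect a genuine obstacle: everything is elementary, and positivity of the weights is what guarantees the stated degrees. The one step that requires care is the algebraic matching of the explicitly computed zeros of $\Psi_{12}$ and $\Psi_{21}$ with the closed forms \eqref{xstar}--\eqref{xstar2}, which is a somewhat lengthy but routine simplification.
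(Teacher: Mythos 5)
Your proposal is correct and is essentially the paper's own argument: the paper disposes of Lemma~\ref{lemma51} in one line as a straightforward consequence of the definitions, and your computation (clearing the two poles of $\Phi$ with the linear factors, noting the bottom row of $\phi^g$ kills the apparent pole at the origin, reading off the exact degrees from positivity of the weights, and matching the zero of $\Psi_{12}$ and the nonzero root of $\Psi_{21}$ with \eqref{xstar} and \eqref{xstar2}) is precisely that computation written out. The only point to watch is the paper's wavering index conventions (the pole locations appear both as $\beta_{11}\beta_{21},\,\beta_{12}\beta_{22}$ and as $\beta_{11}\beta_{12},\,\beta_{21}\beta_{22}$, and the printed \eqref{xstar}--\eqref{xstar2} correspond to the transposed-subscript reading of \eqref{phib1}--\eqref{phig2}), which you have silently resolved in the way dictated by the explicit matrices, so this is a notational issue of the paper rather than a gap in your argument.
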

\begin{proof}
	These are straightforward consequences of the definitions \eqref{phib1}--\eqref{defPhi},
	and the formulas \eqref{xstar} and \eqref{xstar2}.
\end{proof}

For $E$ defined in \eqref{defEz} we obtain the following. 
\begin{lemma} \label{lemma52}
	\begin{enumerate}
		\item[\rm (a)] $E_{11}(z) = E_{12}(z)$ and both are polynomials of degree $1$
		that vanish at $z=x^*$.
		\item[\rm (b)] 	$E_{21}(z) E_{22}(z)$ is a polynomial of degree three with zeros at $0$, $x^*$,
		and $x^{**}$.
	\end{enumerate}
\end{lemma}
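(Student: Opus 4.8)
The plan is to express both entries of interest through the polynomial matrix $\Psi$ from Lemma \ref{lemma51} and then simply read off the statements. Write $q(z)$ for the quadratic prefactor appearing both in \eqref{defEz} and in Lemma \ref{lemma51}(a), so that $\Psi(z) = q(z)\Phi(z)$ entrywise and each $\Psi_{ij}$ is a polynomial. For part (a) this is essentially immediate: the $(1,1)$ and $(1,2)$ entries of the matrix in \eqref{defEz} are both $\Phi_{12}$, so $E_{11}(z) = E_{12}(z) = q(z)\Phi_{12}(z) = \Psi_{12}(z)$, and Lemma \ref{lemma51}(b) tells us $\Psi_{12}$ is a polynomial of degree one with a zero at $x^*$, which is exactly the claim.

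For part (b), from \eqref{defEz} we have $E_{21}(z)E_{22}(z) = q(z)^2\big(\lambda_1(z)-\Phi_{11}(z)\big)\big(\lambda_2(z)-\Phi_{11}(z)\big)$. The one substantive step is the elementary $2\times2$ identity: since $\lambda_1,\lambda_2$ are the roots of the characteristic polynomial $\det(\lambda I_2-\Phi) = \lambda^2-(\Tr\Phi)\lambda+\det\Phi$, evaluating it at $\lambda=\Phi_{11}$ gives
\[ \big(\Phi_{11}-\lambda_1\big)\big(\Phi_{11}-\lambda_2\big) = \det\begin{pmatrix} 0 & -\Phi_{12} \\ -\Phi_{21} & \Phi_{11}-\Phi_{22}\end{pmatrix} = -\Phi_{12}\Phi_{21}. \]
Hence $E_{21}E_{22} = -q^2\,\Phi_{12}\Phi_{21} = -(q\Phi_{12})(q\Phi_{21}) = -\Psi_{12}\Psi_{21}$, which is a polynomial by Lemma \ref{lemma51}(a). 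Its degree is $1+2=3$ by parts (b) and (c) of Lemma \ref{lemma51}, and its zero set is exactly $x^*$ (contributed by $\Psi_{12}$) together with $0$ and $x^{**}$ (contributed by $\Psi_{21}$), proving part (b).

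There is no real obstacle here: the whole proof reduces to Lemma \ref{lemma51} together with one line of linear algebra. The only point that deserves a word of care is that $E_{21}$ and $E_{22}$ individually carry the branch cuts of $\lambda_1,\lambda_2$ on $[x_3,x_2]\cup[x_1,x_0]$, so it is not obvious a priori that their product is entire; this is resolved automatically once the product is rewritten as $-\Psi_{12}\Psi_{21}$, a symmetric function of the two sheets, and one should also confirm that the prefactor $q$ in \eqref{defEz} is literally the same quadratic as in Lemma \ref{lemma51}(a) so that $q\Phi_{12}=\Psi_{12}$ and $q\Phi_{21}=\Psi_{21}$ hold without extra factors.
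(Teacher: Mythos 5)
Your proof is correct and follows essentially the same route as the paper: part (a) is read off from \eqref{defEz} together with Lemma \ref{lemma51}(b,e), and for part (b) the identity $(\lambda_1-\Phi_{11})(\lambda_2-\Phi_{11})=-\Phi_{12}\Phi_{21}$ (which the paper derives via $\Tr\Phi$ and $\det\Phi$, and you via evaluating the characteristic polynomial at $\lambda=\Phi_{11}$ — the same computation) yields $E_{21}E_{22}=-\Psi_{12}\Psi_{21}$, after which Lemma \ref{lemma51}(b,c) gives the degree and the zeros exactly as in the paper.
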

\begin{proof}
	(a) is immediate from \eqref{defEz} and Lemma \ref{lemma51}(b,e).
	
	(b)
	Since $\lambda_1$ and $\lambda_2$ are the two eigenvalues of $\Phi$,
	we have $\lambda_1+ \lambda_2 = \Tr \Phi = \Phi_{11}+ \Phi_{22}$ and
	$\lambda_1 \lambda_2 = \det \Phi  = \Phi_{11} \Phi_{22} - \Phi_{21} \Phi_{12}$.
	Hence
	\begin{align*} (\lambda_1 - \Phi_{11})(\lambda_2-\Phi_{11}) & =
		(\Phi_{11} \Phi_{22} - \Phi_{21} \Phi_{12}) - (\Phi_{11}+ \Phi_{22}) \Phi_{11}
		+ \Phi_{11}^2 \\
		= - \Phi_{12} \Phi_{21},
		\end{align*}
	which is a rational function with at most double poles at $\beta_{11} \beta_{21}$ and $\beta_{12} \beta_{22}$.
	Using the notation $\Psi$ from Lemma \ref{lemma51}(a), we get from this and \eqref{defEz} that
	\[ E_{21}(z) E_{22}(z) = -\Psi_{12}(z) \Psi_{21}(z), \]
	which by Lemma \ref{lemma51}(b,c) is indeed a polynomial of degree three 
	with zeros at $0$, $x^*$ and $x^{**}$.	 
\end{proof}

According to Lemma \ref{lemma52}(b) we have that $x^*$ and $x^{**}$ are zeros of either $E_{21}$
or $E_{22}$. The following lemma gives more precise information. 
Recall that $p^*$ and $p^{**}$ are given in \eqref{pstar}.

\begin{lemma} \label{lemmaP}
	The following properties hold.
 \begin{enumerate}
		\item[\rm (a)] $p^*$ and $p^{**}$ are points on the bounded oval of the Riemann surface $\mathcal R$.
		\item[\rm (b)] If $p^*$ is on the second sheet, then $E_{21}(x^*) = 0$.
		\item[\rm (c)] If $p^*$ is on the first sheet, then $E_{22}(x^*) = 0$.
		
		\item[\rm (d)] If $p^{**}$ is on the first sheet, then $E_{21}(x^{**}) = 0$.
		\item[\rm (e)] If $p^{**}$ is on the second sheet, then $E_{22}(x^{**}) = 0$.

	\end{enumerate}
\end{lemma}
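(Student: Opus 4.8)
Everything hinges on Lemma~\ref{lemma51}(e): as $\Phi_{12}(x^*)=0$, the matrix $\Phi(x^*)$ is lower triangular, so its eigenvalues are the diagonal entries $\Phi_{11}(x^*),\Phi_{22}(x^*)$; hence $p^*=(x^*,\Phi_{22}(x^*))$ is a real point of $\mathcal R$. Symmetrically $\Phi_{21}(x^{**})=0$ makes $\Phi(x^{**})$ upper triangular with eigenvalues $\Phi_{11}(x^{**}),\Phi_{22}(x^{**})$, so $p^{**}=(x^{**},\Phi_{11}(x^{**}))$ is a real point of $\mathcal R$. Since the real locus of $\mathcal R$ is the union of the unbounded oval, projecting onto $(-\infty,x_3]\cup[x_0,\infty)$, and the bounded oval, projecting onto $[x_2,x_1]$, part~(a) is equivalent to the two statements $x^*\in[x_2,x_1]$ and $x^{**}\in[x_2,x_1]$, and (b)--(e) will then drop out by bookkeeping.

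\textbf{Part (a).} First I would localize coarsely. By \eqref{xstar}--\eqref{xstar2}, $x^*$ and $x^{**}$ are strictly negative. Since $W=\Phi^N$, formula \eqref{detWforAD} with $k=2$ shows $\det\Phi$ is rational with all zeros and poles on the positive real axis and a positive limit at infinity, so $\det\Phi>0$ on $(-\infty,0]$; thus $\Phi_{11}(x^*)\Phi_{22}(x^*)=\det\Phi(x^*)>0$, i.e.\ the eigenvalues of $\Phi(x^*)$ are real of a common sign, and likewise at $x^{**}$. Being real, $x^*$ and $x^{**}$ avoid $(x_3,x_2)\cup(x_1,x_0)$ (where the eigenvalues of $\Phi$ are non-real), so $x^*,x^{**}\in(-\infty,x_3]\cup[x_2,x_1]\cup[x_0,0)$. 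Moreover, combining $(\lambda_1-\Phi_{11})(\lambda_2-\Phi_{11})=-\Phi_{12}\Phi_{21}$ from the proof of Lemma~\ref{lemma52} with Lemma~\ref{lemma51}(a,b,c) expresses $\Phi_{12}(z)\Phi_{21}(z)$ as a positive constant times $z(z-x^*)(z-x^{**})$ over the square of the polynomial cancelling the poles of $\Phi$; this is $\ge 0$ on the closed interval between $x^*$ and $x^{**}$, so there the discriminant $(\Phi_{11}-\Phi_{22})^2+4\Phi_{12}\Phi_{21}$ is $\ge 0$, forcing that interval to avoid $(x_3,x_2)$ and $(x_1,x_0)$; hence $x^*$ and $x^{**}$ lie in the \emph{same} one of the three intervals above. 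The remaining, and I expect hardest, step is to exclude the two outer ones, i.e.\ to show $p^*,p^{**}$ are not on the unbounded oval. I would do this by a direct sign check: substituting \eqref{xstar}, \eqref{xstar2} into $\Phi=\phi_1\phi_2\phi_3\phi_4$ (equivalently, evaluating the degree-two polynomials $\Psi_{11},\Psi_{22}$ of Lemma~\ref{lemma51} at $x^*,x^{**}$) one reads off $\Tr\Phi(x^*)<0$ and $\Tr\Phi(x^{**})<0$, so the eigenvalues there are negative; but on $(-\infty,x_3]$ and on $[x_0,0]$ the eigenvalues are positive (they are real with $\det\Phi>0$, and $\Tr\Phi>0$ there since $\Phi(\infty)$ is lower triangular with positive diagonal and $\Phi(0)$ is upper triangular with positive diagonal). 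This excludes the outer intervals and leaves $x^*,x^{**}\in[x_2,x_1]$. (Alternatively one may invoke the branch-point analysis of \cite{B21}.)

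\textbf{Parts (b)--(e).} Given (a), these are immediate. As the eigenvalues of $\Phi(x^*)$ are $\Phi_{11}(x^*),\Phi_{22}(x^*)$, saying that $p^*=(x^*,\Phi_{22}(x^*))$ is on the second sheet means precisely $\Phi_{22}(x^*)=\lambda_2(x^*)$, hence $\Phi_{11}(x^*)=\lambda_1(x^*)$; then by \eqref{defEz}
\[
E_{21}(x^*)=(x^*-\beta_{11}\beta_{12})(x^*-\beta_{21}\beta_{22})\bigl(\lambda_1(x^*)-\Phi_{11}(x^*)\bigr)=0,
\]
which is (b), and if $p^*$ is on the first sheet the same computation gives $E_{22}(x^*)=0$, which is (c). Parts (d) and (e) are identical with $x^*$ replaced by $x^{**}$ and $\Phi_{12}(x^*)=0$ by $\Phi_{21}(x^{**})=0$: since $p^{**}=(x^{**},\Phi_{11}(x^{**}))$, being on the first sheet means $\Phi_{11}(x^{**})=\lambda_1(x^{**})$, whence $E_{21}(x^{**})=0$, and being on the second sheet means $\Phi_{11}(x^{**})=\lambda_2(x^{**})$, whence $E_{22}(x^{**})=0$. (In the non-generic case $x^*\in\{x_2,x_1\}$, and similarly for $x^{**}$, both relevant entries of $E$ vanish, so all four claims hold regardless of the sheet convention.)
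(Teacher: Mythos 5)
Parts (b)--(e) of your proposal are correct and coincide with the paper's argument: since $\Phi(x^*)$ is triangular, its eigenvalues are the diagonal entries, and the sheet assignment of $p^*$, $p^{**}$ translates directly into the vanishing of the appropriate entry of \eqref{defEz}. The issue is part (a). Your reduction is fine up to a point: $x^*,x^{**}<0$, the eigenvalues there are real with $\det\Phi>0$ on the negative axis, so $x^*,x^{**}$ avoid $(x_3,x_2)\cup(x_1,x_0)$, and eigenvalue positivity on $(-\infty,x_3]$ and $[x_0,0]$ does follow from sign constancy together with $\Phi(\infty)$ (see \eqref{Phiinf}) and $\Phi(0)$ being triangular with positive diagonal (the latter is a short computation you assert but do not carry out; it is easily supplied, as is the positivity of the constant in $\Psi_{12}\Psi_{21}=c\,z(z-x^*)(z-x^{**})$, e.g.\ by evaluating $\Phi$ at large positive $z$ where all entries are positive). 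But the decisive step --- $\Tr\Phi(x^*)<0$ and $\Tr\Phi(x^{**})<0$ --- is only asserted: you say one ``reads off'' the sign after substituting \eqref{xstar}, \eqref{xstar2}. This is not a read-off. Writing $x^*=-A/B$ with $A,B$ the positive numerator/denominator of \eqref{xstar} and $\Tr\Psi(z)=c_2z^2+c_1z+c_0$, one has $c_2=\gamma_{11}\gamma_{21}+\gamma_{12}\gamma_{22}>0$ and $c_0=\alpha_{21}\alpha_{22}\beta_{11}\beta_{12}+\alpha_{11}\alpha_{12}\beta_{21}\beta_{22}>0$, so the claim is the genuine twelve-parameter inequality $c_1AB>c_2A^2+c_0B^2$, which is exactly the statement that the eigenvalues at $x^*$ are negative --- i.e.\ essentially the content of the lemma itself (that $p^*$ sits on the bounded oval rather than the unbounded one, where $\lambda>0$). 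As it stands, your argument only shows that $x^*$ and $x^{**}$ lie in the same one of the three intervals $(-\infty,x_3]$, $[x_2,x_1]$, $[x_0,0)$; the exclusion of the outer two is not established.

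For comparison, the paper avoids this computation entirely by a deformation argument: for parameters as in the models of \cite{BD23,DK21} one has $x_0=0$ and $x_3=-\infty$, so the unbounded oval has no points over the negative real axis and $x^*\in[x_2,x_1]$ is forced; since the branch points, $x^*$, $x^{**}$ and the ovals vary continuously over the connected parameter space, the conclusion persists for all positive parameters. If you want to keep your direct approach, you must actually prove the trace inequality (for instance by exhibiting $c_1AB-c_2A^2-c_0B^2$ as a sum of positive monomials, if that is indeed the case), or else replace this step by a continuity/connectedness argument of the paper's type; note that such an argument would in fact also prove that $\lambda<0$ on the whole bounded oval, which is the structural fact your sign check is implicitly relying on.
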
 

\begin{proof}
	(a)
	 Since $\Phi_{12}(x^*) = 0$ by Lemma \ref{lemma51}(e), the matrix $\Phi(x^*)$ is
	 lower triangular. Its eigenvalues are thus on the diagonal, and therefore $(x^*, \Phi_{11}(x^*))$
	and $(x^*, \Phi_{22}(x^*))$ are real points on the Riemann surfaces.
	
	Hence they are either on the bounded or on the unbounded oval. To see that they
	are on the bounded oval, we use a continuity argument. If parameters are such that
	$x_0 = 0$ and $x_{3} = -\infty$, then the unbounded oval has no part
	on the negative real $z$-axis. This is e.g.\ the case for the models in \cite{BD23, DK21}.
	Since $-\infty < x^* < 0$ it must follow that $x^* \in [x_2, x_1]$ in that case.
	All notions vary continuously with the parameters, and in addition the parameter
	space is connected. Hence $x^* \in [x_2, x_1]$ for any set of positive parameters,
	and $p^* = (x^*, \Phi_{22}(x^*))$ is on the bounded oval.
	
	Similarly, $p^{**}$ is on the bounded oval, where we now use that 
	$\Phi_{21}(x^{**}) =0$, hence $\Phi_{21}(x^{**})$ is upper triangular, 
	and again we find that $\Phi_{11}(x^{**})$ and $\Phi_{22}(x^{**})$ are the eigenvalues
	of $\Phi(x^{**})$. 
	
	\medskip
	(b) The eigenvalues of $\Phi(x^*)$ are its diagonal elements
	$\Phi_{11}(x^*)$ and $\Phi_{22}(x^*)$. 
	If $p^*$ is on the second sheet then, by \eqref{pstar}, we have $\lambda_2(x^*) = \Phi_{22}(x^*)$,
	and consequently $\lambda_1(x^*) = \Phi_{11}(x^*)$. 
	Then $E_{21}(x^*) = 0$ follows from  \eqref{defEz}.
	
	\medskip
	(c)-(e) are proved similarly. 

\end{proof}
We continue to give the proof of Theorem \ref{theo21} under the following conditions that are generically satisfied, namely
\begin{itemize}
	\item We have 
	\begin{equation} \label{xstrictineq} 
		-\infty < x_3 < x_2 < x_1 < x_0 < 0, \end{equation}
	i.e., strict inequalities hold in \eqref{xineq}.
	\item  Also strict inequalities hold in \eqref{xstarineq}
	\begin{equation} \label{xstarstrict}
		x_2 < x^* < x_1, \quad \text{ and } \quad x_2 < x^{**} < x_1,
	\end{equation} and in addition
	\begin{equation} \label{xstardiff}
		x^* \neq x^{**}. \end{equation}
\end{itemize}
The proof can be easily adapted  to cover the other cases. Alternatively one may
use a limiting argument to prove Theorem \ref{theo21} for the cases where
one or more equalities in \eqref{xstrictineq}, \eqref{xstarstrict},
\eqref{xstardiff} hold, since all notions depend continuously on the
parameters. However we always assume $x_2 < x_1$ as for $x_2 = x_1$ the Riemann surface 
has genus zero.

 As a final preliminary, we collect information on the asymptotic 
 behavior of $\Phi(z)$ and $E(z)$ as $z \to \infty$. 
 
  \begin{lemma} \label{lemma54}
	\begin{enumerate}
		\item[\rm (a)] We have
		\begin{equation} \label{Phiinf} 
			\Phi(z) \to \Phi(\infty) := \begin{pmatrix} \gamma_{11}
				& 0 \\ \alpha_{11} & \gamma_{12} \end{pmatrix} 
				\begin{pmatrix} 1
					& 0 \\ \beta_{11} & 1 \end{pmatrix} 
					\begin{pmatrix} \gamma_{21}
						& 0 \\ \alpha_{21} & \gamma_{22} \end{pmatrix} 
						\begin{pmatrix} 1
							& 0 \\ \beta_{21} & 1 \end{pmatrix}, \end{equation}
       as $z \to \infty$.
		\item[\rm (b)] We have
		\begin{align} \lambda_1(z) & =  \label{lambda1atinf}  \begin{cases}
				\Phi_{11}(\infty) + O(z^{-1}), & \text{if } \Phi_{11}(\infty) > \Phi_{22}(\infty), \\
				\Phi_{22}(\infty) + O(z^{-1}), & \text{if } \Phi_{11}(\infty) < \Phi_{22}(\infty), 
				% \\
				% \Phi_{11}(\infty) + O(z^{-1/2}), & \text{if } \Phi_{11}(\infty) = \Phi_{22}(\infty), 
				\end{cases}  \\
		 \lambda_2(z) & = \label{lambda2atinf} \begin{cases}
				\Phi_{22}(\infty) + O(z^{-1}), & \text{if } \Phi_{11}(\infty) > \Phi_{22}(\infty), \\
				\Phi_{11}(\infty) + O(z^{-1}), & \text{if } \Phi_{11}(\infty) < \Phi_{22}(\infty), 
				% \\
				%	\Phi_{11}(\infty) + O(z^{-1/2}), & \text{if } \Phi_{11}(\infty) = \Phi_{22}(\infty), 
				\end{cases} 
		\end{align}
		as $z \to \infty$.
		\item[\rm (c)] We have
		$E_{11}(z) = E_{12}(z) =  O(z)$ as $z \to \infty$, and
		\begin{align} \label{E21atinf}
				E_{21}(z) & = \begin{cases}
					O(z), & \text{if } \Phi_{11}(\infty) > \Phi_{22}(\infty), \\
					O(z^2), & \text{if } \Phi_{11}(\infty) < \Phi_{22}(\infty), 
					% \\
					% O(z^{3/2}), & \text{if } \Phi_{11}(\infty) = \Phi_{22}(\infty), \\
				\end{cases} \\ \label{E22atinf}
				E_{22}(z) & = \begin{cases}
					O(z^2), & \text{if } \Phi_{11}(\infty) > \Phi_{22}(\infty), \\
					O(z), & \text{if } \Phi_{11}(\infty) < \Phi_{22}(\infty).
					% \\
					% O(z^{3/2}), & \text{if } \Phi_{11}(\infty) = \Phi_{22}(\infty). 
				\end{cases}
			\end{align}
	\end{enumerate}
\end{lemma}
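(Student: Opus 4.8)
The plan is to establish the three parts in sequence, using (a) as input for (b), and both (a) and (b) as input for (c). For part (a), I would observe that by \eqref{phib1}--\eqref{phig2} each factor $\phi_i$ in $\Phi = \phi_1\phi_2\phi_3\phi_4$ depends on $z$ only through $z^{-1}$ (for the $\phi^g$-factors also through the scalars $(1-\beta_{11}\beta_{21}z^{-1})^{-1}$ and $(1-\beta_{12}\beta_{22}z^{-1})^{-1}$), so each $\phi_i$ has a removable singularity at $z=\infty$ with $\phi_i(z)=\phi_i(\infty)+O(z^{-1})$, where $\phi_i(\infty)$ is the lower triangular matrix obtained by deleting the $z^{-1}$ terms. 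Multiplying the four limits and the four error terms gives $\Phi(z)=\Phi(\infty)+O(z^{-1})$ with $\Phi(\infty)$ as in \eqref{Phiinf}. In particular $\Phi(\infty)$ is lower triangular, so its eigenvalues are the diagonal entries $\Phi_{11}(\infty)$ and $\Phi_{22}(\infty)$, both strictly positive since they are products and sums of positive parameters.

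For part (b), I would use that $\lambda_1(z),\lambda_2(z)$ are the roots of $\mu^2-(\Tr\Phi(z))\mu+\det\Phi(z)=0$; by part (a) the coefficients of this quadratic converge, with $O(z^{-1})$ error, to those of the characteristic polynomial of $\Phi(\infty)$, whose roots are $\Phi_{11}(\infty)$ and $\Phi_{22}(\infty)$. These limiting roots are distinct: the discriminant of the limiting quadratic equals $(\Phi_{11}(\infty)-\Phi_{22}(\infty))^2$, which cannot vanish because $z=\infty$ is not one of the branch points $x_0,x_1,x_2,x_3$ of \eqref{EigEq}, all finite by \eqref{xstrictineq}. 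Hence $z=\infty$ lifts to two regular points of $\mathcal R$, at which $\lambda$ is holomorphic with the two distinct values $\Phi_{11}(\infty),\Phi_{22}(\infty)$ and $z^{-1}$ serves as a local coordinate, so each branch of $\lambda$ equals its limiting value up to $O(z^{-1})$. To match branches, I would invoke \eqref{Harnack1}: since both limiting values are positive, for $|z|$ large the branch near $\max(\Phi_{11}(\infty),\Phi_{22}(\infty))$ has strictly larger modulus than the branch near $\min(\Phi_{11}(\infty),\Phi_{22}(\infty))$, which forces \eqref{lambda1atinf}--\eqref{lambda2atinf}.

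For part (c), the statements on $E_{11}=E_{12}$ are immediate from Lemma~\ref{lemma52}(a), which already gives that both are polynomials of degree one. For $E_{21}$ and $E_{22}$, I would write, using \eqref{defEz}, $E_{2j}(z)=q(z)(\lambda_j(z)-\Phi_{11}(z))$, where $q$ is the monic quadratic prefactor in \eqref{defEz}, so $q(z)=z^2+O(z)$, and where $q\Phi_{11}=\Psi_{11}$ is a polynomial by Lemma~\ref{lemma51}(a). Then parts (a) and (b) give: if $\lambda_j(\infty)=\Phi_{11}(\infty)$, then $\lambda_j(z)-\Phi_{11}(z)=O(z^{-1})$ and $E_{2j}(z)=O(z)$; if $\lambda_j(\infty)\neq\Phi_{11}(\infty)$, then $\lambda_j(z)-\Phi_{11}(z)\to\Phi_{22}(\infty)-\Phi_{11}(\infty)\neq0$ and $E_{2j}(z)=O(z^2)$. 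By part (b), $\lambda_1(\infty)=\Phi_{11}(\infty)$ precisely when $\Phi_{11}(\infty)>\Phi_{22}(\infty)$ and $\lambda_2(\infty)=\Phi_{11}(\infty)$ precisely when $\Phi_{11}(\infty)<\Phi_{22}(\infty)$, so the two cases swap the roles of $E_{21}$ and $E_{22}$, yielding \eqref{E21atinf}--\eqref{E22atinf}.

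The only step requiring real care is part (b): one has to note that $z=\infty$ is an unramified point of $\mathcal R$, so that $\lambda_1,\lambda_2$ expand in powers of $z^{-1}$ rather than $z^{-1/2}$, and then combine the positivity of the limiting eigenvalues with the Harnack inequality \eqref{Harnack1} to identify which branch is $\lambda_1$. Parts (a) and (c) are essentially bookkeeping built on the explicit matrices \eqref{phib1}--\eqref{defPhi} and Lemmas~\ref{lemma51}--\ref{lemma52}.
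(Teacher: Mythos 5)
Your proposal is correct and follows essentially the same route as the paper: part (a) by taking the $z\to\infty$ limit in each lower-triangular factor of \eqref{defPhi}, part (b) by noting that $x_3>-\infty$ rules out branching at infinity so that $\lambda_1,\lambda_2$ are analytic there with distinct positive limits $\Phi_{11}(\infty),\Phi_{22}(\infty)$ matched via the ordering \eqref{Harnack1}, and part (c) by reading off the entry growth from \eqref{defEz} together with Lemma~\ref{lemma52}(a) and part (b). You simply spell out the steps (e.g.\ $\lambda_j(z)-\Phi_{11}(z)=O(z^{-1})$ when the limits agree) that the paper's proof leaves as ``immediate''.
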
 

\begin{remark}
	We assumed that $x_3 > -\infty$, see \eqref{xstrictineq}  which means that
	there is no branching at infinity. This assumption amounts to $\gamma_{11} \gamma_{21} \neq 
	\gamma_{12} \gamma_{22}$, which in view of \eqref{Phiinf} means
	that $\Phi_{11}(\infty) \neq \Phi_{22}(\infty)$. This is the reason we do
	not consider $\Phi_{11}(\infty) = \Phi_{22}(\infty)$ in parts (b) and (c). In fact
	one would have fractional powers in the $O$-terms in this case, e.g.\ $E_{21}(z) = O(z^{3/2})$
	and $E_{22}(z) = O(z^{3/2})$ as $z \to \infty$. 
\end{remark}
\begin{proof}
(a) is immediate from \eqref{defPhi} and the formulas for $\phi^b$ and $\phi^g$
in \eqref{phib1}--\eqref{phig2}.

\medskip
(b) From item (a) we see that $\Phi(\infty)$ is a lower triangular matrix with eigenvalues
$\Phi_{11}(\infty) = \gamma_{11} \gamma_{21}$ and $ \Phi_{22}(\infty) = \gamma_{12} \gamma_{22}$, 
that are positive. Then $\lambda_1(z)$
tends to the larger of these two values, and $\lambda_2(z)$ to the smaller, as $z \to \infty$.	
Since $x_3 > -\infty$, we have $\Phi_{11}(\infty) \neq \Phi_{22}(\infty)$, 
and the $O(z^{-1})$ terms in \eqref{lambda1atinf} and \eqref{lambda2atinf} follow as there is no branching, 
and $\lambda_1$, $\lambda_2$ are analytic at infinity.

\medskip
(c) The statements about $E_{11}$ and $E_{12}$ are immediate from Lemma \ref{lemma52}(a),
while \eqref{E21atinf} and \eqref{E22atinf} follow from \eqref{defEz} and part (b).
\end{proof}

\subsubsection{Matrix-valued function $G$}

The matrix-valued function $G = \begin{pmatrix} g_1 & 0 \\ 0 & g_2 \end{pmatrix}$
will be a diagonal matrix with entries $g_1$, $g_2$ that are given in terms of
Jacobi theta functions.  

In order to construct $G$, we first have to observe that $\lambda$ and $z$ are meromorphic functions on $\mathcal R$ with two poles.
These are 
\begin{align} \label{polelambda} 
 	\text{poles of $\lambda$}: \quad & p_1 = (\beta_{11} \beta_{21}, \infty), \qquad  p_2 = (\beta_{12} \beta_{22}, \infty), \\ \label{polez}
 	\text{poles of $z$}: \quad & p_{\infty,1} = (\infty, \gamma_{11} \gamma_{21}), \quad p_{\infty,2} = (\infty, \gamma_{12} \gamma_{22}),  \end{align}
as can be checked from \eqref{phib1}--\eqref{phig2}.
Due to the ordering \eqref{Harnack1} the poles $p_1$ and $p_2$ are on the first
sheet of the Riemann surface. Generically they are distinct. The poles $p_{\infty,1}$
and $p_{\infty,2}$ correspond to the points at infinity of the Riemann surface.
Generically they are also distinct. If $\gamma_{11} \gamma_{21} > \gamma_{12} \gamma_{22}$
then $p_{\infty,1}$ is the point at infinity on the first sheet
and $p_{\infty,2}$ is the point at infinity on the second sheet. 
If $\gamma_{11} \gamma_{21} < \gamma_{12} \gamma_{22}$ then it is the other way around.
If $\gamma_{11} \gamma_{21} = \gamma_{12} \gamma_{22}$, then $x_3=-\infty$,
and infinity is a branch point of the Riemann surface.

The construction of $g_1, g_2$ depends on the four special points \eqref{polelambda}
and \eqref{polez}.  
	
\begin{definition} \label{def23}
	Let $p_1$, $p_2$, $p_{\infty,1}$, and $p_{\infty,2}$
	be as in \eqref{polelambda} and \eqref{polez}. 	
	Then the functions  
	$g_j : \mathbb C \setminus [x_3,x_0] \to \mathbb C$ for $j=1,2$ are defined as
	\begin{equation} \label{defgjz} 
		g_j(z) = \frac{ \vartheta(\mathcal A(z^{(j)})- \mathcal A(p_1) - K) \, \vartheta(\mathcal A(z^{(j)})- \mathcal A(p_2) - K)}
		{\vartheta(\mathcal A(z^{(j)})- \mathcal A(p_{\infty,1}) - K) \, \vartheta(\mathcal A(z^{(j)})- \mathcal A(p_{\infty,2}) - K)} 
	\end{equation}
	where $z^{(j)} = (z, \lambda_j(z))$ denotes the point on the $j$th sheet that projects 
	to $z$, and $\mathcal A$ is the Abel map \eqref{Abelmap}.
	Furthermore
	\begin{equation} \label{defG}
		G(z) = \begin{pmatrix} g_1(z) & 0 \\ 0 & g_2(z) \end{pmatrix}, \quad z \in \mathbb C \setminus [x_3,x_0].
		\end{equation}
\end{definition}

The ratio of products of Jacobi theta functions in \eqref{defgjz} 
gives a quasi-periodic function on the Riemann surface,
where we view $g_1$ as defined on the first sheet, and $g_2$ on the second sheet. 
They turn out to have a discontinuity on the bounded oval.  Define 
\begin{equation} \label{defomega} 
	\omega_0 = \mathcal A(p_1) + \mathcal A(p_2) \in (0,1). \end{equation}
Then it follows from \eqref{Jacobi2} that
\[ g_{1,+} = g_{1,-} e^{2 \pi i \omega_0}, \quad 
	g_{2,+} = g_{2,-} e^{-2\pi i \omega_0}, \qquad \text{on } \, [x_2,x_1], \]
 see also Lemma \ref{lemma56}.

\subsubsection{Matrix-valued function $E_{\omega}$ with $\omega \in \mathbb R$} \label{SubSectEomega}
For any $\omega \in \mathbb R$  we construct in the following the matrix-valued function $E_{\omega}$ used in Theorem \ref{theo21}. If $\omega \in \mathbb Z$ then $E_{\omega} = E$, and the formulas simplify.
The situation $N \omega_0 \in \mathbb Z$ with $\omega_0$ given by \eqref{defomega}
corresponds to the case of a torsion point in the paper of Borodin and Duits \cite{BD19}.

\begin{definition}	\label{def24}
		For $\omega \in \mathbb R$, we define
		\begin{multline} \label{defEomega}
			E_{\omega}(z)  = \\  \begin{pmatrix} E_{11}(z) \ds
				\frac{\vartheta(\mathcal A(z^{(1)}) - \omega -  \mathcal A_+(p^*)-K)}
				{\vartheta(\mathcal A(z^{(1)}) - \mathcal A_+(p^*) - K)} 		
				& E_{12}(z) \ds \frac{\vartheta(\mathcal A(z^{(2)}) - \omega -  \mathcal A_+(p^*)-K)}
				{\vartheta(\mathcal A(z^{(2)}) - \mathcal A_+(p^*) - K)} \\[10pt]
				E_{21}(z) \ds \frac{\vartheta(\mathcal A(z^{(1)}) - \omega -  \mathcal A_+(p^{**})-K)}
				{\vartheta(\mathcal A(z^{(1)}) - \mathcal A_+(p^{**}) - K)} 		
				& E_{22}(z) \ds \frac{\vartheta(\mathcal A(z^{(2)}) - \omega -  \mathcal A_+(p^{**})-K)}
				{\vartheta(\mathcal A(z^{(2)}) - \mathcal A_+(p^{**}) - K)} 
			 \end{pmatrix}, \\
		z \in \mathbb C \setminus [x_3,x_0],
		\end{multline}	
		with $p^*$ and $p^{**}$ as in \eqref{pstar} and $\mathcal A$ being the Abel map.
		As before, we use $z^{(j)}$, for $j=1,2$, to denote
		the point on the $j$th sheet of the Riemann surface that projects to $z$. 
	\end{definition}
Thus $E_{\omega}$ is the Hadamard pointwise product of $E$ with a matrix built out of
ratios of Jacobi theta functions. Its quasi-periodicity properties are listed in Lemma~\ref{lemma57} below

\subsection{Auxiliary lemmas}\label{Sect_Aux_Lem}
\subsubsection{Lemmas on boundary behavior}
The matrix-valued functions $E$, $G$ and $E_{\omega}$ 
that appear in the right-hand sides of the formulas \eqref{PNformula} and \eqref{PNhatformula}
are analytic in the upper and lower half planes. We study their boundary behaviors on the real line.
 
\begin{lemma} \label{lemma55}
	$E$ has boundary values $E_{\pm}$ on the real line satisfying
	\begin{align} \label{Ejump}
		E_+ = \begin{cases} E_-,  & \text{ on } \, (-\infty, x_3] \cup [x_2,x_1] \cup [x_0, \infty), \\
		E_- \sigma_1, & \text{ on } \, [x_{3}, x_2] \cup [x_1,x_0],
	\end{cases} \end{align}
	where $\sigma_1 = \begin{pmatrix} 0 & 1 \\ 1 & 0 \end{pmatrix}$.
\end{lemma}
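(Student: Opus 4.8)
The plan is to analyze how the matrix $E(z)$ defined in \eqref{defEz} behaves as $z$ crosses the real axis, distinguishing the branch-cut intervals $[x_3,x_2]\cup[x_1,x_0]$ from the rest of $\mathbb R$. The entries of $E$ involve $\Phi_{12}$, $\Phi_{11}$ and the eigenvalues $\lambda_1,\lambda_2$. Since $\Phi(z)$ is a rational matrix-valued function with poles only at $\beta_{11}\beta_{21}$ and $\beta_{12}\beta_{22}$, its entries (and hence $\Phi_{12}$ and $\Phi_{11}$) are real-analytic on all of $\mathbb R$ except at those two poles; but by Lemma~\ref{lemma51}(a) the prefactor $(z-\beta_{11}\beta_{12})(z-\beta_{21}\beta_{22})$ cancels the poles, so the functions $z\mapsto (z-\beta_{11}\beta_{12})(z-\beta_{21}\beta_{22})\Phi_{12}(z)$ and $z\mapsto(z-\beta_{11}\beta_{12})(z-\beta_{21}\beta_{22})\Phi_{11}(z)$ are honest polynomials, in particular entire and real on $\mathbb R$. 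So the only source of a jump in $E$ is the behavior of $\lambda_1,\lambda_2$.

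First I would record that $\lambda_1,\lambda_2$ are the two branches of the algebraic function defined by \eqref{EigEq}, and that the branch points on the real line are exactly $x_0,x_1,x_2,x_3$ (by the discussion preceding the lemma, together with \cite[Proposition 2.1]{B21}). On the complementary set $(-\infty,x_3)\cup(x_2,x_1)\cup(x_0,\infty)$ the eigenvalues are real and distinct, so each $\lambda_j$ extends analytically across these intervals; by the Schwarz reflection principle (the coefficients of \eqref{EigEq} are real) the boundary values from above and below agree, giving $E_+=E_-$ there. At the branch points themselves $E_+=E_-$ holds by continuity. On the two cut intervals $[x_3,x_2]$ and $[x_1,x_0]$, however, $\lambda_1$ and $\lambda_2$ are complex conjugates of each other (the discriminant of \eqref{EigEq} is negative there, consistent with $|\lambda_1(z)|>|\lambda_2(z)|$ failing to separate them on $\mathbb R$), so crossing the cut interchanges the two branches: $\lambda_{1,+}=\lambda_{2,-}$ and $\lambda_{2,+}=\lambda_{1,-}$. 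Since the first column of $E$ uses $\lambda_1$ and the second uses $\lambda_2$ (and the first row of $E$ does not involve $\lambda$ at all, cf.~Lemma~\ref{lemma52}(a), so it is continuous), this interchange is precisely right-multiplication by $\sigma_1=\begin{pmatrix}0&1\\1&0\end{pmatrix}$, i.e.\ $E_+=E_-\sigma_1$ on $[x_3,x_2]\cup[x_1,x_0]$.

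To make the branch-interchange claim rigorous I would argue as follows: away from the branch points the Riemann surface $\mathcal R$ consists of two sheets glued crosswise along $[x_3,x_2]\cup[x_1,x_0]$, as recalled in Section~\ref{Sect:RiemannSurface}; the function $z\mapsto z^{(1)}$ from the upper half plane continues across a cut interval to the point on the \emph{other} sheet when approached from the lower half plane, which is the analytic content of the crosswise gluing. Hence $\lambda_{1,+}(x)=\lambda\big((x^{(1)})_+\big)$ continues to $\lambda\big((x^{(2)})_-\big)=\lambda_{2,-}(x)$ for $x$ in a cut interval, and symmetrically with the roles of $1$ and $2$ swapped. Combining with the continuity of the polynomial entries already noted, the jump relation \eqref{Ejump} follows entrywise.

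I do not expect a serious obstacle here; the only point requiring a little care is the bookkeeping of which intervals are cuts and the fact that the eigenvalue ordering \eqref{Harnack1} (defined only off $\mathbb R$) does not by itself determine the boundary values on the cuts — one genuinely needs the Riemann-surface description to see that the two boundary branches are swapped rather than, say, both equal. A secondary subtlety is that $E$ might a priori have poles at $\beta_{11}\beta_{21}$ or $\beta_{12}\beta_{22}$ on the real line, but Lemma~\ref{lemma51}(a) shows the prefactor removes them, so no jump or singularity is introduced there, and the statement as phrased (boundary values on all of $\mathbb R$) makes sense. This lemma should therefore be a short argument once the reflection-principle and crosswise-gluing facts are invoked.
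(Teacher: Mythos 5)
Your argument is correct and follows essentially the same route as the paper: $E$ is analytic across $(-\infty,x_3]\cup[x_2,x_1]\cup[x_0,\infty)$ so there is no jump there, while on the cuts $[x_3,x_2]\cup[x_1,x_0]$ the relation $\lambda_{1,\pm}=\lambda_{2,\mp}$ swaps the two columns of \eqref{defEz}, giving $E_+=E_-\sigma_1$. The extra details you supply (polynomial prefactor cancelling the poles, reflection principle, crosswise gluing) are just a more explicit version of the paper's short verification.
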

\begin{proof}
	The jump  \eqref{Ejump} on $(-\infty, x_3] \cup [x_2,x_1] \cup [x_0, \infty)$
	is immediate as $E$ is analytic there. 
	If $x \in [x_{3}, x_2] \cup [x_1,x_0]$, then we are on the cuts
	of the Riemann surface where $\lambda_{1,\pm}(x) = \lambda_{2,\mp}(x)$.
	The jump \eqref{Ejump} on the cuts follows from this and the definition \eqref{defEz}.
\end{proof}

The boundary behavior of $G$ is described in the next lemma.
\begin{lemma} \label{lemma56}
	$G$ has boundary values $G_{\pm}$ on the real line 	satisfying
	\begin{equation} \label{Gjump}
		G_+ = \begin{cases}
			G_-, & \text{ on } \, (-\infty, x_3] \cup [x_0, \infty), \\
			\sigma_1 G_- \sigma_1, & \text{ on } \, [x_3, x_2] \cup [x_1,x_0], \\ 
			G_- e^{2\pi i \omega_0 \sigma_3},
			& \text{ on } \, [x_2,x_1],
		\end{cases}
	\end{equation} 
	where $\omega_0$ is given by \eqref{defomega} and
	 $\sigma_3 = \begin{pmatrix} 1 & 0 \\ 0 & -1 \end{pmatrix}$.
\end{lemma}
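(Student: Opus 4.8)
The plan is to proceed arc by arc along the real line, since the three cases in \eqref{Gjump} are produced by three different mechanisms; in each of them the only ingredients are the definition \eqref{defgjz} of $g_1,g_2$, the sheet structure of $\mathcal R$, the jump \eqref{Abeljump} of the Abel map across the bounded oval, and the quasi-periodicity \eqref{Jacobi2} of $\vartheta$.

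On $(-\infty,x_3)\cup(x_0,\infty)$ each $g_j$ is analytic, its only pole being at $z=\infty$ (coming from the points at infinity \eqref{polez}), so $G_+=G_-$ there, the branch points being handled by continuity. On $(x_3,x_2)\cup(x_1,x_0)$ we sit on a branch cut of $\mathcal R$, where $\lambda_{1,\pm}=\lambda_{2,\mp}$ as recorded in the proof of Lemma~\ref{lemma55}; hence $z^{(1)}_{\pm}$ and $z^{(2)}_{\mp}$ denote the same point of $\mathcal R$. The two one-sided limits of the Abel map at such a point differ at most by the $\textbf{a}$-period $1$ (they coincide across $[x_1,x_0]$, which is interior to $\mathcal R'$, and differ by an element of $\mathbb Z$ across $[x_3,x_2]$), and such a shift leaves the ratio \eqref{defgjz} unchanged by $1$-periodicity of $\vartheta$. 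Therefore $g_{1,\pm}=g_{2,\mp}$, i.e.\ $G_+=\sigma_1G_-\sigma_1$.

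The substantive case is the bounded oval $(x_2,x_1)$, which is not a branch cut, so $z^{(j)}_{\pm}$ stays on the $j$th sheet while by \eqref{Abeljump} the Abel-map values jump: $\mathcal A(z^{(1)}_+)=\mathcal A(z^{(1)}_-)+\tau$ and $\mathcal A(z^{(2)}_+)=\mathcal A(z^{(2)}_-)-\tau$. I would insert these shifts into the four $\vartheta$-factors of \eqref{defgjz} and apply the second identity in \eqref{Jacobi2} to each; the contributions of $\tau$, of $\mathcal A(z^{(j)})$ and of $K$ then cancel (two zeros balancing two poles), leaving the scalar $\exp\!\big(\pm 2\pi i[\mathcal A(p_1)+\mathcal A(p_2)-\mathcal A(p_{\infty,1})-\mathcal A(p_{\infty,2})]\big)$ multiplying $g_1$ respectively $g_2$. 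To finish, Abel's theorem applied to the meromorphic function $z-x_0$ on $\mathcal R$, whose divisor is $2p_0-p_{\infty,1}-p_{\infty,2}$ with $p_0=(x_0,\lambda(x_0))$ the base point of $\mathcal A$, gives $\mathcal A(p_{\infty,1})+\mathcal A(p_{\infty,2})\equiv 0$; since $p_{\infty,1}$ and $p_{\infty,2}$ lie on the unbounded oval their Abel images are real, so there is no leftover $\tau$ and the surviving factor equals $e^{\pm 2\pi i\omega_0}$ with $\omega_0$ as in \eqref{defomega}, whence $G_+=G_-e^{2\pi i\omega_0\sigma_3}$. The step I expect to require care is exactly this last computation: keeping track of which $\vartheta$-argument acquires $+\tau$ and which $-\tau$ on each sheet, checking that everything outside those four $\mathcal A$-values drops out, and using $\mathcal A(p_{\infty,1})+\mathcal A(p_{\infty,2})=0$ to rule out a spurious period in the exponent; the remaining two arcs are routine once the sheet picture near the cuts is laid out.
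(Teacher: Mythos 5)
Your proposal is correct and follows essentially the same route as the paper: a case-by-case analysis of the three types of real intervals, using continuity of the Abel map on the unbounded oval, the integer discrepancy across the branch cuts together with $1$-periodicity of $\vartheta$, and the $\pm\tau$ shift \eqref{Abeljump} on the bounded oval together with the quasi-periodicity \eqref{Jacobi2}, which yields the factor $e^{\pm 2\pi i(\mathcal A(p_1)+\mathcal A(p_2)-\mathcal A(p_{\infty,1})-\mathcal A(p_{\infty,2}))}$ as in \eqref{defomega2}. The only (minor) divergence is the last normalization: you get $\mathcal A(p_{\infty,1})+\mathcal A(p_{\infty,2})=0$ from Abel's theorem applied to $z-x_0$ plus reality of the Abel images on the unbounded oval, while the paper uses the symmetry $\mathcal A(x^{(2)})=-\mathcal A(x^{(1)})$ for real $x>x_0$; both are valid and equally short.
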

\begin{proof} 
	For $x \in \mathbb R$ we use the notation 	 
	\[ \mathcal A\left(x_{\pm}^{(j)}\right) = \lim_{z \to x, \, \pm \Im z > 0} \mathcal A\left(z^{(j)}\right). \]
	
	For $x \in (-\infty, x_3] \cup [x_0,\infty)$ we are on the unbounded
	oval where $\mathcal A$ takes values in $(-\frac{1}{2}, \frac{1}{2})$.
Given our definition of the Abel map	(see remark after \eqref{Abelmap}), there is no discontinuity there and we have
\begin{align*}
\mathcal A\big(x_{+}^{(j)}\big) = \mathcal A\left(x_{-}^{(j)}\right), \quad \text{for }  \, j = 1,2 \, \text{ and } \, x \in (-\infty, x_3] \cup [x_0,\infty),
\end{align*}
which by  definitions \eqref{defgjz} and \eqref{defG} means that
	$G_+ = G_-$ on $(-\infty, x_3] \cup [x_0,\infty)$,
	as claimed in \eqref{Gjump}.
	
	For $x \in [x_3,x_2] \cup [x_1,x_0]$ we are on one of the branch cuts
	and $x^{(1)}_{\pm} = x^{(2)}_{\mp}$ as points on the Riemann surface.
	The Abel map $\mathcal A$ has either  purely imaginary values at these points, 
	or takes limiting values in $\pm \frac{1}{2} + i \mathbb R$. In either case we have that
	$\mathcal A\big( x_\pm^{(1)} \big) - \mathcal A\big(x^{(2)}_\mp \big) \in \mathbb Z$.
	Because of the periodicity \eqref{Jacobi2} of the Jacobi theta function, 
	and the definition \eqref{defgjz} we obtain 
	$g_{1,\pm}(x) = g_{2,\mp}(x)$ which results in the jump \eqref{Gjump}
	on $[x_3, x_2] \cup [x_1,x_0]$.
		
	For $x \in [x_2, x_1]$ we are on the bounded oval and $x^{(j)}_{\pm} = x^{(j)}_{\mp}$
	for $j=1,2$. The Abel map $\mathcal A$ takes limiting values at these points
	in $[-\frac{1}{2}, \frac{1}{2}] \pm \frac{\tau}{2}$. Their difference is   
	\begin{align*} \mathcal A\big(x^{(1)}_+\big) - \mathcal A\left(x^{(1)}_-\right) & = -\tau, \\
		\mathcal A\big(x^{(2)}_+\big) - \mathcal A\left(x^{(2)}_-\right) & = \tau.
		\end{align*}
	We then use the quasi-periodicity property \eqref{Jacobi2} for all four Jacobi theta functions in \eqref{defgjz} to find
	\begin{equation} \label{g1g2jump}
		g_{1,+}(x)  = g_{1,-}(x) e^{2\pi i \omega_0}, \quad  
		g_{2,+}(x)  = g_{2,-}(x) e^{-2\pi i \omega_0},  \qquad \text{for }
		x \in [x_2,x_1],
		\end{equation}
	with 
	\begin{equation} \label{defomega2} 
		\omega_0 = \mathcal A\left(p_1\right) + \mathcal A\left(p_2\right) -
	 \mathcal A\left(p_{\infty,1}\right) - \mathcal A\left(p_{\infty,2}\right). \end{equation}
	This gives the jump \eqref{Gjump} for $G$ on $[x_2,x_1]$.
	To see that $\omega_0$ is also given by \eqref{defomega}, we observe that
	$\mathcal A(x^{(2)}) = - \mathcal A(x^{(1)})$ for every real $x > x_0$,
	which is an immediate consequence of the definition of the Abel map with base
	point $x_0$. Then $\mathcal A(p_{\infty,2}) = - \mathcal A(p_{\infty,1})$,
	and \eqref{defomega} follows from \eqref{defomega2}.
	\end{proof}
	
	We need more information about the boundary behavior of $E_{\omega}$.
	\begin{lemma} \label{lemma57} Let $\omega \in \mathbb R$.
		\begin{enumerate}
			\item[\rm (a)] 
			$E_{\omega}$ is well-defined and analytic in $\mathbb C \setminus [x_3,x_0]$.
			\item[\rm (b)] 
			$E_{\omega}$ has bounded boundary values on the real line.
			\item[\rm (c)] The boundary values satisfy 
			\begin{equation} \label{Eomegajump}
				E_{\omega,+} = \begin{cases}
					E_{\omega,-}, & \text{ on } \, (-\infty, x_3] \cup [x_0, \infty), \\
					E_{\omega,-}\sigma_1,  & \text{ on } \,[x_3, x_2]
					\cup [x_1,x_0], \\ 
					E_{\omega,-} e^{2\pi i \omega \sigma_3},
					& \text{ on } \, [x_2,x_1].
				\end{cases}
			\end{equation}
			\item[\rm (d)]
			If $p^*$ is on the second sheet, then
			\[ \left( E_{\omega} \right)_{j1}(z) = O(z-x^*) \quad \text{ as } z \to x^*, \quad \text{for } j=1,2.\]
			\item[\rm (e)]
			If $p^*$ is on the first sheet,	then
			\[ \left( E_{\omega}\right)_{j2}(z) = O(z-x^*) \quad \text{ as } z \to x^*, \quad \text{for } j=1,2.\]
		\end{enumerate}
	\end{lemma}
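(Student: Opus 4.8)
The plan is to read off all five assertions directly from the defining formula \eqref{defEomega}, which exhibits $E_\omega$ as the entrywise product of $E$ with the $2\times 2$ matrix whose $(i,j)$ entry is a ratio of two Jacobi theta functions — the first row built from $p^*$, the second from $p^{**}$. The ingredients I will use are: the analyticity and jump relation \eqref{Ejump} for $E$ (Lemma~\ref{lemma55}); the degrees and zeros of the entries of $E$ (Lemmas~\ref{lemma52} and \ref{lemmaP}); the behaviour of the Abel map across the cuts and across the bounded oval, i.e.\ \eqref{Abeljump} and the computations in the proof of Lemma~\ref{lemma56}; the quasi-periodicity \eqref{Jacobi2} of $\vartheta$; and the fact that the holomorphic differential $\eta$ is nonzero at $x^*$ and $x^{**}$ — equivalently, that $\mathcal A$ is a local coordinate there — which holds because by \eqref{xstarstrict} these points lie strictly inside $(x_2,x_1)$ and are not branch points. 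For part (a): on $\mathbb C\setminus[x_3,x_0]$ the functions $\lambda_j$ are analytic and $z^{(j)}\in\mathcal R'$, so $z\mapsto\mathcal A(z^{(j)})$ and hence the theta numerators are holomorphic there, and $E$ is holomorphic by Definition~\ref{def22}; moreover the theta denominators are nonvanishing on $\mathbb C\setminus[x_3,x_0]$, since by \eqref{defK} and the injectivity of the Abel map $\vartheta(\mathcal A(z^{(j)})-\mathcal A_+(p^*)-K)$ vanishes only at $z^{(j)}=p^*$, i.e.\ at $z=x^*$, the $p^{**}$-denominators only at $z=x^{**}$, and $x^*,x^{**}\in(x_2,x_1)\subset[x_3,x_0]$ are excluded. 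So $E_\omega$ is analytic on $\mathbb C\setminus[x_3,x_0]$.

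Part (b) is really a statement about the behaviour at $x^*$ and $x^{**}$. On $\mathbb R$ the entries of $E$ are bounded on compacta — they are built from the polynomials $\Psi_{ij}$ of Lemma~\ref{lemma51} and from $\lambda_1,\lambda_2$, which have at most square-root singularities at the branch points — and have boundary values by Lemma~\ref{lemma55}, while the theta numerators are bounded (entire $\vartheta$ at a bounded argument). Among the four theta denominators, the only ones whose boundary values vanish are the $(1,j)$-entry with $j$ the sheet of $p^*$, vanishing at $x^*$, and the $(2,j)$-entry with $j$ the sheet of $p^{**}$, vanishing at $x^{**}$; both zeros are simple because $\mathcal A$ is a local coordinate there. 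But $E_{11}=E_{12}$ has a simple zero at $x^*$ by Lemma~\ref{lemma52}(a), and the second-row entry of $E$ in the column equal to the sheet of $p^{**}$ has a simple zero at $x^{**}$ by Lemmas~\ref{lemma52}(b) and \ref{lemmaP}(d,e) (the other second-row entry being nonzero there, so the simple zero of the product $E_{21}E_{22}$ is carried by this one). Hence every vanishing denominator is cancelled by a simple zero of the matching entry of $E$, $E_\omega$ stays bounded, and (b) follows.

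For part (c): by (a) and (b), $E_\omega$ is analytic off $[x_3,x_0]$ with boundary values, so on $(-\infty,x_3)\cup(x_0,\infty)$ it continues analytically and $E_{\omega,+}=E_{\omega,-}$. On the cuts $[x_3,x_2]\cup[x_1,x_0]$ the sheets are interchanged, so $z^{(1)}_+$ and $z^{(2)}_-$ are the same point of $\mathcal R$ and (proof of Lemma~\ref{lemma56}) their Abel images differ by an integer; by the $1$-periodicity in \eqref{Jacobi2} the $(i,1)$ theta ratio from above equals the $(i,2)$ one from below and conversely, which together with $E_+=E_-\sigma_1$ gives $E_{\omega,+}=E_{\omega,-}\sigma_1$. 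On the bounded oval $[x_2,x_1]$ the sheets are not interchanged but the Abel image jumps by $\pm\tau$ according to the sheet, by \eqref{Abeljump}; inserting this into the $\tau$-quasi-periodicity of \eqref{Jacobi2}, the $\tau$-contributions cancel between numerator and denominator while the $\omega$-shift in the numerator leaves a surviving factor $e^{\mp 2\pi i\omega}$ — with sign $+$ on the first-sheet column and $-$ on the second — so that with $E_+=E_-$ one obtains $E_{\omega,+}=E_{\omega,-}e^{2\pi i\omega\sigma_3}$.

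Finally parts (d) and (e). Near $x^*$ the two second-row denominators involve $p^{**}$ and, since $x^*\ne x^{**}$ by \eqref{xstardiff}, are analytic and nonzero at $x^*$, so each $(E_\omega)_{2j}$ equals $E_{2j}$ times a factor analytic at $x^*$. Suppose $p^*$ lies on the first sheet; then the $(1,2)$-denominator, built from $z^{(2)}$ and $p^*$, also does not vanish at $x^*$ (its only zero, $z^{(2)}=p^*$, cannot occur), so $(E_\omega)_{12}$ likewise equals $E_{12}$ times a factor analytic at $x^*$; since $E_{12}(x^*)=E_{11}(x^*)=0$ by Lemma~\ref{lemma52}(a) and $E_{22}(x^*)=0$ by Lemma~\ref{lemmaP}(c), both entries of the second column of $E_\omega$ are $O(z-x^*)$, which is (e). The case $p^*$ on the second sheet gives (d) in exactly the same way, now using Lemma~\ref{lemmaP}(b). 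I expect the main obstacle to be organisational rather than conceptual: one must track, simultaneously in (b)–(e), on which sheet each of $p^*,p^{**}$ lies, which theta denominator in \eqref{defEomega} consequently vanishes at $x^*$ resp.\ $x^{**}$, and the fact that this vanishing is cancelled \emph{exactly} — neither over- nor under-cancelled — by the zero of the matching entry of $E$ furnished by Lemmas~\ref{lemma52} and \ref{lemmaP}; a secondary, purely computational point is the sign bookkeeping in the $\tau$-quasi-periodicity argument in part (c).
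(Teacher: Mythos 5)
Your proposal is correct and follows essentially the same route as the paper: it reads everything off the Hadamard-product structure of \eqref{defEomega}, uses the quasi-periodicity \eqref{Jacobi2} together with \eqref{Abeljump} for the jumps, and invokes Lemmas \ref{lemma52} and \ref{lemmaP} plus the genericity assumptions \eqref{xstarstrict}, \eqref{xstardiff} to cancel the vanishing theta denominators at $x^*$ and $x^{**}$, exactly as in Section \ref{Sect_Aux_Lem}. The only differences are cosmetic refinements (explicitly noting that the denominator zeros are simple because $x^*,x^{**}$ are not branch points, and which factor of $E_{21}E_{22}$ carries the zero at $x^{**}$), which the paper leaves implicit.
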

	\begin{proof}
		(a) Recall that $E_{\omega}$ is given by \eqref{defEomega}. It is the Hadamard product of $E$ with the matrix-valued function
		\begin{align} \label{defPsi}
			\Psi_{\omega}(z)  = \begin{pmatrix} \psi_j (\mathcal A( z^{(k)}); \omega) \end{pmatrix}_{j,k=1,2},  
		\end{align}	
		that is built out of the following ratios of Jacobi theta functions
		\begin{align} \label{psi1} 
			\psi_1(u;\omega) & = \frac{\vartheta(u - \omega -  \mathcal A_+(p^*)-K)}{\vartheta(u - \mathcal A_+(p^*) - K)}, \\
			\label{psi2} 
			\psi_2(u; \omega) & = \frac{\vartheta(u - \omega -  \mathcal A_+(p^{**})-K)}{\vartheta(u - \mathcal A_+(p^{**}) - K)}. 
		\end{align}	
		By \eqref{psi1} we have that  $\psi_1$ has simple poles at $\mathcal A_+(p^*) + (\mathbb Z + \tau \mathbb Z)$
		and $\psi_2$ has simple poles at $\mathcal A_+(p^{**}) + (\mathbb Z + \tau \mathbb Z)$.
		Since $p^*$ and $p^{**}$ are on the bounded oval, the denominators in \eqref{defEomega}
		do not vanish for $z \in \mathbb C \setminus [x_3,x_0]$. Therefore \eqref{defEomega} 
		is well defined and  analytic in $\mathbb C \setminus [x_3, x_0]$. 
		\medskip
		
		(b) The boundary values $E_{\omega,\pm}(x)$ exist for every $x \in \mathbb R$ 
		except possibly at the special
		values $x = x^*$ or $x = x^{**}$.
		If $z \to x^*$ then either $z^{(1)} \to p^*$ or $z^{(2)} \to p^*$, depending on whether
		$p^*$ is on the first or second sheet. Thus one of the denominators in the
		first row of \eqref{defEomega} tends to $0$ as $z \to x^*$. However, both $E_{11}$
		and $E_{12}$ have a zero at $x^*$, see Lemma \ref{lemma52}(a), 
		and since the Jacobi theta function $\vartheta$
		has only simple zeros, we find that the first row of \eqref{defEomega} remains bounded as $z \to x^*$. If $p^*$ is on the first sheet, then the zero of $E_{11}$ gets cancelled by the zero of the Jacobi theta function in the denominator, 
		while the zero of $E_{12}$ at $x^*$
		remains a zero of $E_{\omega}$, i.e.,  
		\begin{equation} \label{Eomegazero1}
			\left(E_{\omega,\pm} \right)_{12}(x^*) = 0,  \quad \text{ if $p^*$ is on the first sheet}.
		\end{equation}
		Similarly
		\begin{equation} \label{Eomegazero2} 
			\left(E_{\omega,\pm} \right)_{11}(x^*) = 0,  \quad \text{ if $p^*$ is on the second sheet}. 
		\end{equation}
	
		We encounter for $z \to x^{**}$ a similar situation in the second row of \eqref{defEomega},
		as one of the Jacobi theta functions in the denominator
		vanishes at $x^{**}$.  
		More concretely, if $p^{**}$ is on the first sheet then this happens in the $21$-entry, but
		then by Lemma \ref{lemmaP}(d) also $E_{21}(x^*) =0$.
		Similarly, if $p^{**}$ is on the second sheet, then this happens in the $22$-entry, 
		but we again have $E_{22}(x^*) =0$. In either case the pole is cancelled by a zero, 
		and $E_{\omega,\pm}$ also exists at $x^{**}$. This concludes the proof of (b).
		\medskip
		
		(c)	By the quasi-periodicity
		properties \eqref{Jacobi1} the functions $\psi_1$ and $\psi_2$ defined by  \eqref{psi1}, \eqref{psi2}
		satisfy
		\[ \psi_j(u+1; \omega) = \psi_j(u;\omega) \quad \text{and} \quad
		\psi_j(u+\tau; \omega) = e^{2\pi i \omega} \psi_j(u;\omega), \]
		for $j=1,2$.     Because of \eqref{Abeljump} and \eqref{defPsi} this implies
		\begin{equation} \label{JPsi1} 
			\Psi_{\omega,+}(x) =  \Psi_{\omega,-}(x) e^{2\pi i \omega \sigma_3}, \qquad x \in (x_2,x_1). 
		\end{equation}
				For $x \in (x_3, x_2) \cup (x_1,x_0)$ we have $x^{(1)}_{\pm} = x^{(2)}_{\mp}$ as
		points on the Riemann surface. The closed contour from $p_0$ to $x_{\pm}^{(1)}$ on the first sheet 
		and then from $x^{(2)}_{\mp}$ back to $p_0$ on the second sheet, is either
		contractible or homotopic to the $\bf{a}$ cycle or to $-\bf{a}$. Because of the
		normalization $\oint_{\bf a} \eta = 1$ of the holomorphic differential that is used
		in the Abel map \eqref{Abelmap} we find 
		$\mathcal A(x_{\pm}^{(1)}) - \mathcal A(x^{(2)}_{\mp}) \in \mathbb Z$, and
		thus by the first periodicity property \eqref{Jacobi2} and the definitions \eqref{defPsi}, \eqref{psi1}, \eqref{psi2},
		we obtain
		\begin{equation} \label{JPsi2} 
			\Psi_{\omega,+}(x) = \Psi_{\omega_-}(x) \sigma_1, \quad x \in  (x_{3}, x_2) \cup (x_1,x_0). 
		\end{equation}
		Combining \eqref{JPsi1}, \eqref{JPsi2} with the jump \eqref{Ejump} of $E$, and
		recalling the definition \eqref{defEomega},    we find \eqref{Eomegajump}. 
		\medskip
		
		(d) Suppose $p^*$ is on the second sheet. 
		Then $(E_{\omega,\pm})_{11}(x^*) = 0$ as already noted in \eqref{Eomegazero2}.
		By analyticity, we have $\left( E_{\omega}\right)_{11}(z) = O(z-x^*)$ as $z \to x^*$. 

		Also $E_{21}(z) = O(z-x^*)$ as $z \to x^*$ by Lemma \ref{lemmaP}(b) and analyticity,
		together with the assumption that $x^* \in (x_2, x_1)$, see \eqref{xstarstrict}.
		In \eqref{defEomega} $E_{21}$ is multiplied by  $\psi_2(\mathcal A(z^{(1)}))$
		which remains bounded as $z \to x^*$,
		since we assumed $x^{*} \neq x^{**}$, see \eqref{xstarineq}. 
		Thus $\left( E_{\omega}\right)_{21}(z) = O(z-x^*)$ as $z \to x^*$.
		 
		\medskip
		
		(e) The proof is similar to the proof of part (d). 
	\end{proof}	
	
\subsubsection{Lemmas on determinants}
	
The formulas \eqref{PNformula} and \eqref{PNhatformula} involve
the inverse matrices of $E$ and $E_{N\omega_0}$. To see that these
are well-defined we need to know that the inverses indeed exists.
This follows from the next lemma.

\begin{lemma} \label{lemma58} 
	Let $\omega \in \mathbb R$. Then
	\begin{equation} \label{detEomegasquare} 
		z \mapsto \left(\det E_{\omega}(z)\right)^2 \end{equation}
	is a degree six polynomial with a double zero at $z = x^*$ and
	simple zeros at the four branch points $x_j$, $j=0,1,2,3$, of the Riemann surface $\mathcal R$.
\end{lemma}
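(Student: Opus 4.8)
The plan is to first compute $(\det E)^2$ explicitly (which is the case $\omega\in\mathbb Z$, since $E_\omega=E$ there) and then reduce the general case to it. Write $q(z)$ for the quadratic factor in \eqref{defEz}, whose two roots are the poles of $\Phi$, so that $E=q\,\begin{pmatrix}\Phi_{12}&\Phi_{12}\\\lambda_1-\Phi_{11}&\lambda_2-\Phi_{11}\end{pmatrix}$ and hence, using Lemma~\ref{lemma51}(a) and $q\Phi=\Psi$,
\[
\det E = q(z)^2\,\Phi_{12}(z)\,(\lambda_2(z)-\lambda_1(z)) = \Psi_{12}(z)\,q(z)\,(\lambda_2(z)-\lambda_1(z)).
\]
By Lemma~\ref{lemma51}(b), $\Psi_{12}$ has degree one with its only zero at $x^*$, so this is a nonzero constant times $(z-x^*)q(z)(\lambda_2-\lambda_1)$. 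Squaring and using $(\lambda_1-\lambda_2)^2=(\operatorname{Tr}\Phi)^2-4\det\Phi$ together with $\operatorname{Tr}\Phi=(\Psi_{11}+\Psi_{22})/q$ (Lemma~\ref{lemma51}(a),(d)) and $\det\Phi=R/q$, where $R(z)=(\gamma_{11}\gamma_{21}z-\alpha_{11}\alpha_{21})(\gamma_{12}\gamma_{22}z-\alpha_{12}\alpha_{22})$ follows from \eqref{defPhi} and the determinant formulas for $\phi^b,\phi^g$, gives
\[
(\det E(z))^2 = \mathrm{const}\cdot (z-x^*)^2\bigl[(\Psi_{11}(z)+\Psi_{22}(z))^2-4q(z)R(z)\bigr].
\]
The bracketed polynomial has leading coefficient $(\gamma_{11}\gamma_{21}-\gamma_{12}\gamma_{22})^2$, nonzero precisely because $x_3>-\infty$ (Remark after Lemma~\ref{lemma54}), hence it has degree exactly four; it is divisible by $\prod_{j=0}^{3}(z-x_j)$ because at each branch point $\lambda_1=\lambda_2$ and $q$ does not vanish there ($q$ has positive roots, the $x_j$ are negative by \eqref{xstrictineq}), the $x_j$ being distinct. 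Therefore $(\det E)^2=c^2(z-x^*)^2\prod_{j=0}^{3}(z-x_j)$ with $c\neq0$, which is the assertion for $\omega\in\mathbb Z$.

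For general $\omega\in\mathbb R$ I would show that $f:=(\det E_\omega)^2$ is a constant multiple of $(\det E)^2$. First, $f$ extends to an entire function: by Lemma~\ref{lemma57}(c) the boundary-value jumps of $E_\omega$ on $\mathbb R$ are right multiplication by $\sigma_1$ on $[x_3,x_2]\cup[x_1,x_0]$, by $e^{2\pi i\omega\sigma_3}$ on $[x_2,x_1]$, and trivial elsewhere; since $\det\sigma_1=-1$ and $\det e^{2\pi i\omega\sigma_3}=1$, taking determinants and squaring removes all jumps, and since $E_\omega$, hence $f$, is bounded near each $x_j$ (Lemma~\ref{lemma57}(b)), the singularities at the $x_j$ are removable. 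Second, $f=O(z^6)$: the entries of $E_\omega$ are those of $E$ times theta-function ratios evaluated at $\mathcal A(z^{(1)}),\mathcal A(z^{(2)})$, which tend as $z\to\infty$ to their values at $\mathcal A(p_{\infty,1}),\mathcal A(p_{\infty,2})$; these are finite and are not poles of the ratios, whose poles sit at $\mathcal A_+(p^*),\mathcal A_+(p^{**})$ on the bounded oval while $p_{\infty,1},p_{\infty,2}$ lie on the unbounded oval (so the Abel images differ by a nonzero imaginary part). Hence the theta ratios stay bounded at infinity, the growth orders of $E_\omega$ match those of $E$ in Lemma~\ref{lemma54}(c), and $\det E_\omega=O(z^3)$. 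Thus $f$ is a polynomial of degree at most six.

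To pin down the zeros: by Lemma~\ref{lemma57}(d),(e) (together with $E_{11}=E_{12}$ and Lemma~\ref{lemmaP}) one entry in the relevant row or column of $E_\omega$ vanishes at $x^*$, and correspondingly another entry appearing in the same $2\times2$ expansion of $\det E_\omega$ vanishes at $x^*$ as well, so $\det E_\omega(x^*)=0$ and $f$ has a zero of order $\geq2$ there. Moreover $f(x_j)=0$: $\det E_\omega$ is bounded near $x_j$ and changes sign across the slit through $x_j$, so a nonzero limit would make its square analytic and nonvanishing at $x_j$, forcing $\det E_\omega$ itself analytic across the slit — a contradiction. Consequently $f(z)\big/\bigl[(z-x^*)^2\prod_{j=0}^{3}(z-x_j)\bigr]$ is entire and $O(1)$ at infinity, hence a constant, and comparison with the case $\omega\in\mathbb Z$ finishes the proof once we know this constant is nonzero, i.e.\ $\det E_\omega\not\equiv0$. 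The latter follows by reading off the leading coefficient of $\det E_\omega = E_{11}\bigl[\psi_1^{(1)}\psi_2^{(2)}E_{22}-\psi_1^{(2)}\psi_2^{(1)}E_{21}\bigr]$ (with $\psi_j^{(k)}$ the row-$j$ theta ratio evaluated on the $k$-th sheet), whose dominant term by Lemma~\ref{lemma54}(c) carries the factor $\psi_1(\mathcal A(p_{\infty,1});\omega)\,\psi_2(\mathcal A(p_{\infty,2});\omega)$; this is nonzero because the zeros of $\psi_1(\cdot;\omega),\psi_2(\cdot;\omega)$ lie at $\mathcal A_+(p^*)+\omega,\mathcal A_+(p^{**})+\omega$, which have imaginary part $\pm\Im\tau/2\neq0$ for real $\omega$, whereas $\mathcal A(p_{\infty,1}),\mathcal A(p_{\infty,2})\in(-\tfrac12,\tfrac12)$. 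I expect the main obstacle to be the bookkeeping in this last step — tracking, in each of the two cases for the sheet of $p^*$, exactly which entries of $E_\omega$ vanish at $x^*$, and making the branch-point vanishing argument precise — while the determinant computation of the first paragraph is routine given Lemmas~\ref{lemma51} and \ref{lemma54}.
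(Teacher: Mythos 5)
Your proposal is correct, and its second half essentially reproduces the paper's proof: there too, $(\det E_{\omega})^2$ is shown to be entire because $\det \sigma_1=-1$ and $\det e^{2\pi i \omega \sigma_3}=1$, the bound $\det E_{\omega}(z)=O(z^3)$ comes from the boundedness of the theta ratios at infinity together with Lemma \ref{lemma54}(c), the double zero at $x^*$ comes from the vanishing column guaranteed by Lemma \ref{lemma57}(d,e), and the zeros at the branch points come from the sign change of $\det E_{\omega}$ across $[x_3,x_2]\cup[x_1,x_0]$ (the paper's version of your branch-point step is more direct: at $x_j$ the $\pm$ boundary values of $\det E_{\omega}$ must be equal, approaching along the uncut side, and opposite, approaching along the cut, hence zero; your local square-root argument is a slightly roundabout form of this). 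What you do differently is the first paragraph and the last step: the explicit identity $\det E=\Psi_{12}\,q\,(\lambda_2-\lambda_1)$, hence $(\det E)^2=\Psi_{12}^2\bigl[(\Psi_{11}+\Psi_{22})^2-4qR\bigr]$ with leading coefficient proportional to $(\gamma_{11}\gamma_{21}-\gamma_{12}\gamma_{22})^2\neq 0$, and the explicit verification that $\det E_{\omega}\not\equiv 0$ via the nonvanishing of $\psi_1(\mathcal A(p_{\infty,1});\omega)\,\psi_2(\mathcal A(p_{\infty,2});\omega)$. This buys something genuine: the paper concludes ``the degree is six since we already found six zeros,'' which tacitly presupposes $\det E_{\omega}\not\equiv 0$; your leading-coefficient argument settles this point explicitly (and it is used later, when $\det E_{-N\omega_0}/\det E$ is taken to be a finite nonzero constant in the proof of Theorem \ref{theo21}), and your computation for $\omega\in\mathbb Z$ even identifies the constant. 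Two small places to tighten: Lemma \ref{lemma54}(c) only gives $O$-bounds, so to know that your ``dominant term'' really dominates you should read the exact leading behavior off \eqref{defEz} and Lemma \ref{lemma54}(b) (e.g.\ $E_{22}(z)\sim(\Phi_{22}(\infty)-\Phi_{11}(\infty))z^2$ when $\Phi_{11}(\infty)>\Phi_{22}(\infty)$, while $E_{11}=\Psi_{12}$ has degree exactly one); and at $x^*$ it is an entire column of $E_{\omega}$ that vanishes linearly by Lemma \ref{lemma57}(d,e), which is what makes $\det E_{\omega}(x^*)=0$ immediate, rather than ``one entry of a row or column'' plus a matching cofactor.
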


\begin{proof}
	From \eqref{Eomegajump} we see that $(\det E_{\omega})_+ = (\det E_{\omega})_-$
	on $(-\infty, x_{3}] \cup [x_2,x_1] \cup [x_0, \infty)$
	and $(\det E_{\omega})_+ = -(\det E_{\omega})_-$ on $[x_3, x_2] \cup [x_1,x_0]$,
	since $\det \sigma_1 = -1$ and $\det e^{2\pi i \omega \sigma_3} = 1$.
	Thus $\det E_{\omega}$ vanishes at $x_j$ for $j=0,1,2,3$.   
	It also implies that \eqref{detEomegasquare} has the same $\pm$ boundary values
	on the full real line, 	and \eqref{detEomegasquare} is therefore entire. 
	The formulas \eqref{defEz} and \eqref{defEomega} show that each entry
	of $E_{\omega}$ can have at most polynomial growth at infinity, and
	therefore \eqref{detEomegasquare} is a polynomial
	with zeros at the  branch points of the Riemann surface.
	
	There is also a double zero at $x^*$, as we show next. By Lemma \ref{lemma57}(d,e) 
	one of the columns of $E_{\omega}$ vanishes linearly at $x^*$. 
	Then $x^*$ is a zero of $z \mapsto \det E(z)$ and thus 
	a double zero (or possibly higher order zero) of \eqref{detEomegasquare}.
	
	We now have located six zeros of \eqref{detEomegasquare} 
	namely the branch points $x_j$, $j=0,1,2,3$ and 
	the double zero at $x^*$. We show that these are all the zeros by examining the behavior at infinity.
	From the fact that the Jacobi theta functions
	in \eqref{defEomega} remain bounded and bounded away from zero as $z \to \infty$,
	we first  conclude that 
	\[ \det E_{\omega}(z) =  O \left( E_{11}(z) E_{22}(z)\right)  +  O\left(E_{21}(z) E_{12}(z) \right) \]
	as $z \to \infty$. 	Then we use Lemma \ref{lemma54}(c) and we find
	$\det E_{\omega}(z) = O(z^3)$ as $z \to \infty$.  
	Thus the polynomial \eqref{detEomegasquare} has degree $\leq 6$, but then the degree is six since we already found six zeros.

	The lemma follows. 
\end{proof}

\begin{lemma} \label{lemma59}
	The functions $g_1$ and $g_2$ are analytic on $\mathbb C \setminus [x_3,x_0]$ with
	the following properties
	\begin{enumerate} 
		\item[\rm (a)]
		$g_1$ has simple zeros at $\beta_{11} \beta_{12}$ and $\beta_{21} \beta_{22}$ 
		(or a double zero in case these values are equal),
		and $g_1$, $g_2$ have no other zeros.
		\item[\rm (b)] There are non-zero constants $c_{\infty,j}$ for $j=1,2$ such that
		\begin{equation} \label{gjatinf} 
			g_j(z) = c_{\infty,j} z + O(1) \quad \text{ as } z \to \infty. \end{equation}
		\item[\rm (c)] $\det G = g_1 g_2$ is a degree two polynomial with zeros at $\beta_{11} \beta_{12}$ 
		and $\beta_{21} \beta_{22}$.
	\end{enumerate}
\end{lemma}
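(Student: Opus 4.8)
The plan is to read all three claims off the explicit formula \eqref{defgjz}, viewing $g_1$ and $g_2$ as the restrictions to the first and second sheet of one and the same quasi-periodic theta quotient on $\mathcal R$. Two structural facts drive everything. First, the Abel map $\mathcal A$ restricts to a biholomorphism of $\mathcal R' = \{p \in \mathcal R : z(p) \notin [x_3,x_1]\}$ onto the rectangle \eqref{rectangle}, so that for $p,q \in \mathcal R'$ we have $\mathcal A(q) = \mathcal A(p)$ in $\mathbb C/(\mathbb Z+\tau\mathbb Z)$ if and only if $q = p$. Second, since $2K = 1+\tau \in \mathbb Z+\tau\mathbb Z$ and $\vartheta$ has simple zeros precisely on $K + (\mathbb Z + \tau\mathbb Z)$ (see \eqref{defK}), the factor $\vartheta\big(\mathcal A(z^{(j)}) - \mathcal A(p) - K\big)$ vanishes exactly when $z^{(j)} = p$, and then only to first order in any local coordinate at $p$ in which $\mathcal A$ is unbranched. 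Combining these with the positions of the four special points — $p_1, p_2$ lie on the first sheet at finite, non-branch points (see \eqref{polelambda}), and, since there is no branching at infinity under \eqref{xstrictineq}, $p_{\infty,1}$ and $p_{\infty,2}$ are the two points at infinity, one on each sheet (see \eqref{polez}) — pins down the divisors of $g_1$ and $g_2$ completely.

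For part (a), I would observe that for $g_1$ the numerator of the quotient \eqref{defgjz} vanishes exactly when $z^{(1)} \in \{p_1, p_2\}$, i.e., at $z = z(p_1)$ and $z = z(p_2)$, and that these are simple zeros because $z$ is a local coordinate at the regular points $p_1, p_2$ (a single double zero if $p_1 = p_2$); the denominator has no zero in $\mathbb C \setminus [x_3,x_0]$, its only zero on the first sheet being at the point at infinity, which is not in the domain. Hence $g_1$ is analytic on $\mathbb C \setminus [x_3,x_0]$ with exactly the stated zeros. For $g_2$, the numerator of the same quotient never vanishes, since $p_1, p_2$ sit on the first sheet while $z^{(2)}$ sits on the second, and the denominator is again zero-free in the cut plane; so $g_2$ is analytic there and has no zeros.

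For part (b), I would carry out a local analysis at the point at infinity $p_{\infty,k}$ lying on the $j$th sheet. Since there is no branching at infinity, $1/z$ is a local coordinate there and $\mathcal A$ is locally biholomorphic, so $\mathcal A(z^{(j)}) - \mathcal A(p_{\infty,k}) = a_j z^{-1} + O(z^{-2})$ as $z \to \infty$ with $a_j \neq 0$. Among the four theta factors in \eqref{defgjz}, three tend to finite nonzero limits (their arguments tend to points outside the zero set $K + (\mathbb Z+\tau\mathbb Z)$ of $\vartheta$, because $p_{\infty,k}$ is distinct from $p_1$, $p_2$ and from the other point at infinity), while the remaining denominator factor equals $\vartheta'(-K)\,a_j z^{-1} + O(z^{-2})$ with $\vartheta'(-K) \neq 0$, since $-K$ is a simple zero of $\vartheta$. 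Dividing, $g_j(z) = c_{\infty,j} z + O(1)$ with $c_{\infty,j} \neq 0$, which is \eqref{gjatinf}.

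For part (c), I would use Lemma \ref{lemma56} (see \eqref{Gjump}): on each subinterval of $\mathbb R$ the boundary values of $G$ are related by $G_+ = G_-$, $G_+ = \sigma_1 G_- \sigma_1$, or $G_+ = G_- e^{2\pi i \omega_0 \sigma_3}$, and in every case $\det G_+ = \det G_-$, because $\det \sigma_1 = -1$ enters squared and $\det e^{2\pi i \omega_0 \sigma_3} = 1$. Hence $g_1 g_2 = \det G$ has matching one-sided boundary values along all of $\mathbb R$ and extends to an entire function, which by parts (a) and (b) behaves like $c_{\infty,1} c_{\infty,2} z^2 + O(z)$ at infinity; it is therefore a polynomial of degree exactly two, and, since $g_2$ has no zeros, its zeros are precisely those of $g_1$, namely $z(p_1)$ and $z(p_2)$. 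I do not anticipate a genuine obstacle; the only point needing care is the bookkeeping of which of $p_{\infty,1}, p_{\infty,2}$ lies on which sheet, together with the no-branching-at-infinity hypothesis (equivalently $\gamma_{11}\gamma_{21} \neq \gamma_{12}\gamma_{22}$), which is exactly what promotes the crude bound $g_j(z) = O(z)$ to the precise asymptotics with nonzero leading coefficient required in (b) and (c).
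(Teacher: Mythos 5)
Your proposal is correct and follows essentially the same route as the paper: parts (a) and (b) are read off from the divisor of the theta quotient \eqref{defgjz} (zeros of the numerator at $p_1,p_2$ on the first sheet, poles only at the two points at infinity, with simplicity coming from the simple zeros of $\vartheta$ and the Abel map being unbranched there), and part (c) follows from $\det G_+=\det G_-$ via \eqref{Gjump}, so $g_1g_2$ is entire and, by the growth in (b), a degree-two polynomial with zeros exactly at the zeros of $g_1$. Your treatment is somewhat more detailed than the paper's (notably the explicit $1/z$-expansion at infinity and the use of injectivity of the Abel map on $\mathcal R'$), but it is the same argument.
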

\begin{proof}
	(a) From its definition  \eqref{defgjz} we see that $g_j$ has a zero at $z$ if 
	and only if $\mathcal A(z^{(j)}) = \mathcal A(p_1)$
	or $\mathcal A(z^{(j)}) = \mathcal A(p_2)$. Since $p_1$ and $p_2$ are on the first sheet this
	happens if and only if $j=1$ and $z = \beta_{11} \beta_{12}$ or $z=\beta_{11} \beta_{12}$. This proves (a) as the zeros of the Jacobi theta function are simple.
	\medskip
	
	(b)
	The poles of \eqref{defgjz} arise from the zeros of one of the factors in the denominator,
	and this happens when  $\mathcal A(z^{(j)}) = \mathcal A(p_{\infty,1})$
	or $\mathcal A(z^{(j)}) = \mathcal A(p_{\infty,2})$. Thus there are poles at the
	two infinities and part (b) follows.
	\medskip
	
	(c) From \eqref{Gjump} we easily get $ \det G_+ = \det G_-$ on the full real line.
	Hence $\det G = g_1 g_2$ has an analytic continuation to an entire function.
	Because of (b) it is a polynomial of degree two, and the zeros come from
	the zeros of $g_1$ as given in part (a) of the lemma. 
\end{proof}

\subsection{Proof of Theorem \ref{theo21}}
\label{SectProofThm}
\begin{proof}
After all the preparations we are ready for the proof of Theorem \ref{theo21}.
Let $P_N$ and $\widehat{P}_N$ be defined by the right hand sides of \eqref{PNformula} and \eqref{PNhatformula}. We show that for an appropriate choice of invertible
matrices $C_N$ and $\widehat{C}_N$, the functions $P_N$ and $\widehat{P}_N$ are monic matrix-valued
polynomials of degree $N$ such that $P_NW$ and $W \widehat{P}_N$ are also matrix-valued polynomials. We give the details for $P_N$, since the proofs for $\widehat{P}_N$
follow in the same way. 

The right-hand side of \eqref{PNformula} is well-defined and analytic for $z \in \mathbb C \setminus \mathbb R$ because of Lemma \ref{lemma57}(a) and Lemma \ref{lemma58}.

The boundary values on the real line are finite, except possibly at $x^*$
or at one of the branch points, since there the inverse matrix $E^{-1}$ has 
a singularity due to Lemma \ref{lemma58}. 
We calculate the jumps of $P_N$ using \eqref{Ejump}, \eqref{Gjump}, \eqref{Eomegajump}.  
The three  factors are
analytic across $(-\infty, x_3) \cup (x_0,\infty)$.  On $(x_3,x_2) \cup (x_1,x_0)$
we find using $\sigma_1^2 = I_2$,
\begin{align*}
	\left(E_{-N\omega_0} G^N E^{-1} \right)_+
	& = \left( E_{-N \omega_0,-} \sigma_1 \right) \left( \sigma_1  G_-^N \sigma_1 \right)  (E_- \sigma_1)^{-1} \\  
	& = \left(E_{-N\omega_0} G^N E^{-1} \right)_-.
	\end{align*}
For $x \in (x_2, x_1)$, $x \neq x^*$, we find by 
	 \eqref{Ejump}, \eqref{Gjump}, \eqref{Eomegajump},
\begin{align*}
 \left(E_{-N\omega_0} G^N E^{-1} \right)_+
	& = \left( E_{-N \omega_0} \right)_- e^{-2\pi i N \omega_0 \sigma_3}  G_-^N e^{2\pi i N \omega_0 \sigma_3}  E_-^{-1} \\  
	& = \left(E_{-N\omega_0} G^N E^{-1} \right)_-.
\end{align*}
In the second step we used  that $G^N$ commutes with 
$e^{2\pi i N \omega_0 \sigma_3}$, as both are diagonal matrices.

It follows that $E_{-N\omega_0} G^N E^{-1}$ has a continuation to a meromorphic
function on  $\mathbb C$ with isolated singularities at $x^*$, and $x_j$ for $j=0,1,2,3$.
By Lemma \ref{lemma58}, $z \mapsto \det E(z)$ vanishes like $(z-x)^{1/2}$ as $z \to x$ if 
$x$ is one of the branch points. Then
$\left(E_{-N\omega_0} G^N E^{-1} \right)(z) =  O((z-x)^{-1/2})$, which is enough to conclude that
the isolated singularities at the branch points are removable.
As $z \to x^*$ we have $(\det E(z))^{-1} = O((z-x^*)^{-1})$ by Lemma \ref{lemma58}, 
which would allow for a simple
pole at $z=x^*$. But as noted in Lemma \eqref{lemma57}(d,e), $E$ and $E_{\pm N\omega_0}$ 
have a zero column in common at $z=x^*$. Suppose it is the first column (i.e., $p^*$ is
on the second sheet). Then as $z \to x^*$,
\begin{align*} 
	E_{-N \omega_0}(z) & = \begin{pmatrix} O(z-x^*) & O(1) \\ O(z-x^*) & O(1) \end{pmatrix}, \\ 
	E^{-1}(z) & = \frac{1}{\det E(z)} \begin{pmatrix} E_{22}(z) & -E_{21}(z) \\
		-E_{12}(z) & E_{11}(z) \end{pmatrix}	= \begin{pmatrix} O((z-x^*)^{-1}) & O((z-x^*)^{-1}) \\
			O(1) & O(1) \end{pmatrix}. \end{align*}
Since $G^N$ is a diagonal matrix that remains bounded near $x^*$, we obtain from the above
\begin{multline*} 
	  E_{-N\omega_0}(z) G^N(z) E^{-1}(z) \\ = 
		\begin{pmatrix} O(z-x^*) & O(1) \\ O(z-x^*) & O(1) \end{pmatrix} 
		\begin{pmatrix} O(1) & 0 \\ 0 & O(1) \end{pmatrix}
		 \begin{pmatrix} O((z-x^*)^{-1}) & O((z-x^*)^{-1}) \\ O(1) & O(1) \end{pmatrix} \\
		 = \begin{pmatrix} O(1) & O(1) \\ O(1) & O(1) \end{pmatrix} \quad \text{ as } z \to x^*.
	\end{multline*} 
A similar calculation works if the second column is the zero column for $E(x^*)$.
This shows that  $ E_{-N\omega_0} G^N E^{-1}$ remains bounded near $x^*$.
Hence the singularity at $x^*$ is removable and $ E_{-N\omega_0} G^N E^{-1}$ is  
a matrix-valued polynomial. 

To determine the degree we examine the behavior at infinity. Suppose $\Phi_{11}(\infty) > \Phi_{22}(\infty)$. Then by \eqref{E21atinf} and \eqref{E22atinf},
\begin{equation} \label{Easymp} 
	E(z) = \begin{pmatrix} O(z) & O(z) \\ O(z) & O(z^2) \end{pmatrix} \end{equation}
and
\begin{equation} \label{Einvasymp} 
	E^{-1}(z) = \frac{1}{\det E(z)} \begin{pmatrix} E_{22}(z) & -E_{12}(z) \\
		- E_{21}(z) & E_{11}(z) \end{pmatrix} =  O(z^{-3})
			\begin{pmatrix} O(z^2) & O(z) \\ O(z) & O(z) \end{pmatrix} \end{equation}
as $z \to \infty$, where we used Lemma \ref{lemma58}.
By \eqref{defEomega}, $E_{-N\omega_0}(z)$ has the same behavior as $E(z)$ as $z \to \infty$,
since the Jacobi theta functions in \eqref{defEomega} have finite, non-zero limits at infinity. Hence by \eqref{Easymp}
\begin{equation} \label{Eomegaasymp} 
	E_{-N\omega_0}(z) = \begin{pmatrix} O(z) & O(z) \\ O(z) & O(z^2) \end{pmatrix} 
	\quad \text{ as } z \to \infty. \end{equation}
Furthermore, by \eqref{defG} and Lemma \ref{lemma59}(b)
\begin{equation} \label{Gasymp}
	G(z) = \begin{pmatrix} O(z) & 0 \\ 0 & O(z) \end{pmatrix} \quad \text{ as } z \to \infty. 
\end{equation} 	
Combining \eqref{Einvasymp}, \eqref{Eomegaasymp} and \eqref{Gasymp} we find
\begin{multline*} 
	E_{-N\omega_0}(z) G^N(z) E^{-1}(z) \\
	 = \begin{pmatrix} O(z) & O(z) \\
		O(z) & O(z^2) \end{pmatrix}  \begin{pmatrix} O(z^N) & 0 \\ 0 & O(z^N) \end{pmatrix} 
	 \begin{pmatrix} O(z^{-1}) & O(z^{-2}) \\ O(z^{-2}) & O(z^{-2}) \end{pmatrix} \\
	 = \begin{pmatrix} O(z^N) & O(z^{N-1}) \\ O(z^N) & O(z^N) \end{pmatrix}
	 \quad \text{ as } z \to \infty
\end{multline*}
which shows that $E_{-N\omega_0} G^N E^{-1}$ is a matrix-valued polynomial of degree $\leq N$
in case $\Phi_{11}(\infty) > \Phi_{22}(\infty)$, with a coefficient of $z^N$ that is lower
triangular.  The same conclusion follows in the other case
$\Phi_{11}(\infty) < \Phi_{22}(\infty)$.

Next note that
\[  \det \left( E_{-N \omega_0} G^N E^{-1} \right)
	= \frac{\det E_{-N \omega_0}}{\det E}  \left( \det G \right)^N. \]
Because of Lemma \ref{lemma58}, $\frac{\det E_{-N \omega_0}}{\det E(\omega_0)}$
is a finite non-zero constant, and 
$\det G = c_{\infty,1} c_{\infty,2} (z-\beta_{11} \beta_{12}) (z-\beta_{21} \beta_{22})$
by Lemma \ref{lemma59}(b,c). 
We conclude that for some non-zero constant $c_N$,
\[ \det \left( E_{-N \omega_0}(z) G^N(z) E^{-1}(z) \right)  = c_N (z-\beta_{11} \beta_{12})^N (z-\beta_{21} \beta_{22})^N.\]
Therefore $E_{-N \omega_0} G^N E^{-1}$ has  exactly degree $N$ with an invertible leading
coefficient.  Define $C_N$ to be the inverse of this leading coefficient, then 
$P_N$ given by \eqref{PNformula} is indeed a monic matrix-valued polynomial of degree $N$. As the leading coefficient of $E_{-N\omega_0}G^N E^{-1}$
is a lower triangular matrix, its inverse $C_N$ is lower triangular as well.

To complete the proof  we show that $P_NW$ is a matrix-valued polynomial as well.
Since $W$ is a rational matrix function with poles at 
$\beta_{11} \beta_{12}$ and $\beta_{21} \beta_{22}$ only, it is enough to show
that these poles are cancelled in the product $P_N W$.
The columns of $E$ are the eigenvectors of $\Phi$ and $W = \Phi^N$. We thus have
$W = E \begin{pmatrix} \lambda_1^N & 0 \\ 0 & \lambda_2^N \end{pmatrix} E^{-1}$.
Then by  \eqref{PNformula}
\begin{equation} \label{PNW} 
	P_N W = C_N E_{-N \omega_0} \begin{pmatrix} (g_1 \lambda_1)^N & 0 \\ 0 & (g_2 \lambda_2)^N \end{pmatrix}
E^{-1}.  \end{equation}
Recall that $\lambda_1$ has simple poles at $\beta_{11} \beta_{12}$ and $\beta_{21} \beta_{22}$,
see Definition \ref{def23} (or a double pole in case these values are equal).
The poles of $\lambda_1$ are cancelled by the zeros of $g_1$, see Lemma \ref{lemma59}(a).
Furthermore, $\lambda_2$ has no poles, and also the other factors in the right-hand side of \eqref{PNW}
remain bounded at the poles of $W$. Thus $P_NW$ has no poles. From its behavior at infinity (see \eqref{Phiinf}), it follows that $P_NW$ is a matrix-valued polynomial.
Thus $P_N$ given by the right-hand side of \eqref{PNformula} is the monic
matrix-valued polynomial of degree $N$ for the weight $W$. As mentioned before, the claim for $\widehat P_N$ follows in a similar manner. This completes
the proof of Theorem \ref{theo21}.    
\end{proof}

\subsection*{Acknowledgements}
We are grateful to Tomas Berggren for interesting discussions.
Both authors are  supported by Methusalem grant METH/21/03 – long term structural funding of the 
Flemish Government.

\end{document}